\NeedsTeXFormat{LaTeX2e}% LaTeX 2.09 can't be used (nor non-LaTeX)
[1994/12/01]% LaTeX date must December 1994 or later
\documentclass[manuscript, a4paper, reqno]{aomart}

\chardef\bslash=`\\ 

\newtheorem{thm}{Theorem}[section]
\newtheorem{cor}[thm]{Corollary}
\newtheorem{lem}[thm]{Lemma}

\newtheorem{prop}[thm]{Proposition}

\theoremstyle{remark} 

%\newtheorem{defn}[thm]{Definition}

%\newcounter{qcounter}

\usepackage{dsfont}
\usepackage{amssymb}

\usepackage{graphicx}
\usepackage{moresize}

\usepackage[numeric]{amsrefs}

\usepackage[english]{babel}

\theoremstyle{definition}
\newtheorem{defn}{\textsc{Definition}}[section]
\newtheorem{rem}{Remark}[section]
\newtheorem*[{}\it]{notation}{Notation}

\newtheorem*[{}\it]{rest}{\textsc{Theorem}}
\newtheorem*[{}\it]{quest}{\textsc{Question}}
\newtheorem*[{}\it]{projone}{\texttt{Project 1 (Future Work)}}
\newtheorem*[{}\it]{projtwo}{\texttt{Project 2 (Current Work)}}
\newtheorem*[{}\it]{projthree}{\texttt{Project 3 (Current Work)}}
\newtheorem*[{}\it]{projfour}{\texttt{Project 4 (Future Work)}}
\newtheorem*[{}\it]{projfive}{\texttt{Project 5 (Future Work)}}
\newtheorem*[{}\it]{proofoflemma}{Proof of Lemma}

\usepackage[textwidth=6in,textheight=9.25in, paperwidth=7.5in,paperheight=11.25in, inner=2.5cm, outer=2.5cm]{geometry} % add in the following: inner=2.0cm, outer=1.5cm

\usepackage[usenames,dvipsnames]{xcolor}

\usepackage{multicol}

\usepackage{mathrsfs}

\usepackage{txfonts}

\usepackage{enumerate}

\usepackage{tikz}
\usetikzlibrary{shapes.geometric, arrows}

\usepackage{amssymb}
\usepackage[misc]{ifsym}

\title[]{On Global-in-time Chaotic Weak Solutions of the Liouville Equation for Hard Spheres}
\author[]{Mark Wilkinson}\thanks{Department of Mathematics and Computer Science (MACS), Heriot-Watt University, Edinburgh, EH14 4AS. (\Letter) \href{mailto:mark.wilkinson@hw.ac.uk}{mark.wilkinson@hw.ac.uk}}

\linespread{1}

%\startpage{1}
%\endpage{}

%\setcounter{tocdepth}{1}

\allowdisplaybreaks

%\pagestyle{empty}

%New Commands

\newcommand{\ov}{\overline}

\hypersetup{citecolor=red}

\bibliographystyle{amsplain}

\usepackage{tikz}
\usetikzlibrary{patterns}
\usepackage{tkz-graph}
\tikzstyle{vertex}=[circle, draw, inner sep=0pt, minimum size=5pt]

\usepackage{wasysym}

\newcommand{\mres}{\mathbin{\vrule height 1.6ex depth 0pt width
0.13ex\vrule height 0.13ex depth 0pt width 1.3ex}}

\definecolor{ccqqqq}{rgb}{0.8,0,0}\definecolor{ffvvqq}{rgb}{1,0.3333333333333333,0}\definecolor{wwwwww}{rgb}{0.4,0.4,0.4}

\usetikzlibrary{arrows}
\definecolor{cqcqcq}{rgb}{0.7529411764705882,0.7529411764705882,0.7529411764705882}
\definecolor{ffqqqq}{rgb}{1,0,0}\definecolor{uququq}{rgb}{0.25098039215686274,0.25098039215686274,0.25098039215686274}\definecolor{wqwqwq}{rgb}{0.3764705882352941,0.3764705882352941,0.3764705882352941}\definecolor{zzccff}{rgb}{0.6,0.8,1}

\begin{document}

\maketitle
\begin{abstract}
\noindent We outline a new method of construction of global-in-time weak solutions of the Liouville equation -- and also of the associated BBGKY hierarchy -- corresponding to the hard sphere singular Hamiltonian. Our method makes use only of geometric reflection arguments on phase space. As a consequence of our method, in the case of $N=2$ hard spheres, we show for {\em any} chaotic initial data, the unique global-in-time weak solution $F$ of the Liouville equation is realised as $F=\mathsf{R}(f\otimes f)$ in the sense of tempered distributions on $T\mathbb{R}^{6}\times (-\infty, \infty)$, where $\mathsf{R}:\mathscr{S}\rightarrow\mathscr{S}'$ is a `reflection-type operator' on Schwartz space, and $f$ is a global-in-time classical solution of the 1-particle free transport Liouville equation on $T\mathbb{R}^{3}$.
%We claim that the work herein fills a gap in the monograph of \textsc{Gallagher, Saint-Raymond and Texier}, in which the derivation of the BBGKY hierarchy from the underlying hard sphere ODEs was performed erroneously. 
\end{abstract}

\section{Introduction}
In this article, we employ a new method of construction of global-in-time weak solutions of the Liouville equation for physical hard sphere dynamics given by
\begin{equation}\label{liouenn}
\frac{\partial F_{N}}{\partial t}+(V\cdot \nabla_{X})F_{N}=0 \quad \text{on}\hspace{2mm}\mathcal{D}_{N}\times (-\infty, \infty),
\end{equation}
which is subject to mass transport boundary conditions. The set $\mathcal{D}_{N}$ denotes the phase space bundle $\sqcup_{X\in\mathcal{P}_{N}}\mathcal{V}(X)$, where $\mathcal{P}_{N}\subset\mathbb{R}^{3N}$ is the {\em hard sphere table} given by
\begin{equation*}
\mathcal{P}_{N}:=\left\{
X=[x_{1}, ..., x_{N}]\in\mathbb{R}^{3N}\,:\,|x_{i}-x_{j}|\geq \varepsilon\hspace{2mm}\text{for}\hspace{2mm}i\neq j
\right\}
\end{equation*}
for $N\geq 2$ congruent hard spheres of radius $\varepsilon>0$, and $\mathcal{V}(X)\subseteq\mathbb{R}^{3N}$ is the (generalised) tangent space to $\mathcal{P}_{N}$ at $X\in\mathcal{P}_{N}$. Our method makes uses neither of the hard sphere flow on $\mathcal{P}_{N}$, nor of a priori estimates or weak compactness methods. Moreover, our method reveals what we believe to be new information about the the nature of propagation of chaos for hard sphere systems. Indeed, we claim that if one furnishes equation \eqref{liouenn} with initial data $F_{N, 0}$ of the shape
\begin{equation*}
F_{N, 0}(X, V):=\prod_{j=1}^{N}\phi_{0}(x_{j}, v_{j})
\end{equation*}
for some initial 1-particle distribution function $\phi_{0}$, then the associated unique global-in-time weak solution $F_{N}$ of the Liouville equation \eqref{liouenn} admits the structure
\begin{equation}\label{chaos}
F_{N}=\mathsf{R}_{N}\left(\bigotimes_{j=1}^{N}\phi\circ\pi_{j}\right)\quad \text{in}\hspace{2mm}\mathscr{S}',
\end{equation}
where $\mathscr{S}'$ denotes the space of tempered distributions on $T\mathbb{R}^{3N}\times (\infty, \infty)$, $\pi_{j}:T\mathbb{R}^{3N}\rightarrow T\mathbb{R}^{3}$ is a canonical projection operator for $j=1, ..., N$, the symbol $\otimes$ denotes the standard tensor product operator on Schwartz functions defined on $T\mathbb{R}^{3}$, and $\mathsf{R}_{N}:\mathscr{S}\rightarrow\mathscr{S}'$ is a {\em reflection-type} operator on Schwartz space $\mathscr{S}$ which is built using classical Boltzmann scattering matrices. We term the identity \eqref{chaos} in this article propagation of {\bf Schwartz chaos}. Note that this is a {\em global-in-time} statement about the chaotic structure of weak solutions, in that the equality \eqref{chaos} holds in the sense of tempered distributions on the whole set $\mathcal{D}_{N}\times (\infty, \infty)$ and not a strict subset thereof. In this work, we detail only the case of two hard spheres in the whole space $\mathbb{R}^{3}$, i.e. we consider only $N=2$. However, our method can be extended to an $N$-particle system. We tackle this in forthcoming work \textsc{Wilkinson} \cite{me2}.
\subsection{Set-up of the Problem}
Let us now set up in more detail the problem of focus in this article. In all that follows, we fix $N=2$, and for notational convenience we drop it as a subscript from all objects which depend on it. We also suppose that the radius $\varepsilon>0$ of the congruent hard spheres is given and fixed. We also suppress notational dependence of all objects on $\varepsilon$. 

Let $X=[x, \ov{x}]\in\mathbb{R}^{6}$ denote\footnote{In general, in this article, square brackets will be used to denote the vector concatenation of vector data.} the concatenation of the centres of mass $x, \ov{x}\in\mathbb{R}^{3}$ of the two hard spheres, and let also $V=[v, \ov{v}]\in\mathbb{R}^{6}$ denote the concatenation of their associated linear velocities $v, \ov{v}\in\mathbb{R}^{6}$. The hard sphere singular Hamiltonian $H_{\ast}:T\mathbb{R}^{6}\rightarrow [0, \infty]$ is defined by
\begin{equation*}
H_{\ast}(X, V):=\frac{1}{2}(|v|^{2}+|\ov{v}|^{2})+\left\{
\begin{array}{ll}
0 & \quad \text{if} \hspace{2mm}|x-\ov{x}|\geq \varepsilon\vspace{2mm}\\
\infty & \quad \text{if}\hspace{2mm}\text{otherwise}.
\end{array}
\right.
\end{equation*}
In the kinetic theory literature, the Liouville equation associated to $H_{\ast}$ is typically written in the form
\begin{equation}\label{lioutwo}
\frac{\partial F}{\partial t}+(V\cdot\nabla_{X})F=0 \quad\text{on}\hspace{2mm}\mathcal{D}\times (-\infty, \infty),
\end{equation}
where $\mathcal{D}$ is the {\em hard sphere phase space bundle} given by
\begin{equation*}
\mathcal{D}:=\bigsqcup_{X\in\mathcal{P}}\mathcal{V}(X),
\end{equation*}
and $\mathcal{P}\subset\mathbb{R}^{6}$ is the {\em hard sphere table} defined by
\begin{equation*}
\mathcal{P}:=\left\{
X=[x, \ov{x}]\in\mathbb{R}^{6}\,:\,|x-\ov{x}|\geq \varepsilon
\right\},
\end{equation*}
which admits the structure of a real analytic manifold with boundary. Indeed, the boundary $\partial\mathcal{P}$ of the hard sphere table is diffeomorphic to the product manifold $\mathbb{S}^{2}\times\mathbb{R}^{3}$. Moreover, the (generalised) tangent space $\mathcal{V}(X)$ to the manifold with boundary $\mathcal{P}$ at the point $X\in\mathcal{P}$ is given explicitly by
\begin{equation*}
\mathcal{V}(X):=\left\{
\begin{array}{ll}
\mathbb{R}^{6} & \quad \text{if}\hspace{2mm}X\in\mathcal{P}^{\circ}, \vspace{2mm}\\
\Sigma_{X} & \quad \text{if}\hspace{2mm}X\in\partial\mathcal{P},
\end{array}
\right.
\end{equation*}
where $\mathcal{P}^{\circ}$ denotes the interior of the (closed) table $\mathcal{P}$, and $\Sigma_{X}\subset\mathbb{R}^{6}$ denotes the closed half-space 
\begin{equation*}
\Sigma_{X}:=\left\{
V\in\mathbb{R}^{6}\,:\,V\cdot\widehat{\nu}_{X}\leq 0
\right\},
\end{equation*}
for the unit-norm vector $\widehat{\nu}_{X}\in\mathbb{R}^{6}$ given by
\begin{equation*}
\widehat{\nu}_{X}:=\frac{1}{\sqrt{2}\varepsilon}\left[
\begin{array}{c}
\ov{x}-x\\
x-\ov{x}
\end{array}
\right]\in\mathbb{S}^{5}.
\end{equation*} 
Although the Liouville equation is written as \eqref{lioutwo} above, this is in many respects misleading to those who seek the `correct' weak formulation of the transport equation associated to the singular Hamiltonian $H_{\ast}$. Firstly, the transport equation is underdetermined in the sense that boundary conditions on $\partial\mathcal{D}$ have not yet been prescribed. Secondly, a probability distribution function $F$ can be expected to satisfy \eqref{lioutwo} in the sense of distributions on $\mathcal{D}\times (-\infty, \infty)$ no more than the trajectory $t\mapsto Z(t):=[X(t), V(t)]$ can be expected to satisfy the ODE system
\begin{equation}\label{newton}
\frac{dZ}{dt}=JZ
\end{equation}
in the sense of $\mathbb{R}^{12}$-valued distributions on $\mathbb{R}$, where $J\in\mathbb{R}^{12\times 12}$ is the block matrix
\begin{equation*}
J:=\left(
\begin{array}{cc}
0 & I \\
0 & 0
\end{array}
\right),
\end{equation*}
with $0\in\mathbb{R}^{6\times 6}$ the zero matrix and $I\in\mathbb{R}^{6\times 6}$ the identity matrix. Indeed, in general, one ought to interpret Newton's equations of motion \eqref{newton} as
\begin{equation*}
\frac{dZ}{dt}=\mu \quad \text{in}\hspace{2mm}\mathscr{D}'(\mathbb{R}, \mathbb{R}^{12}),
\end{equation*}
where $\mu$ is an $\mathbb{R}^{12}$-valued Radon measure on $\mathbb{R}$ (dependent on initial data) which is determined by the conservation of linear momentum, angular momentum and kinetic energy, and $\mathscr{D}'(\mathbb{R}, \mathbb{R}^{12})$ denotes the space of $\mathbb{R}^{12}$-valued distributions on $\mathbb{R}$. In this same vein, we claim that the correct formulation of the Liouville equation associated to the singular Hamiltonian $H_{\ast}$ is
\begin{equation}\label{weakdude}
\frac{\partial F}{\partial t}+(V\cdot\nabla_{X})F=C[F] \tag{L}
\end{equation}
where $C$ is the collision operator defined on suitable functions on $\mathcal{D}$ by
\begin{equation}\label{collop}
C[F]:=\int_{\partial\mathcal{P}}\int_{\mathbb{R}^{6}}F(Y, V)V\cdot\widehat{\nu}_{Y}\,dVd\mathscr{H}(Y),
\end{equation}
with $\mathscr{H}$ denoting the Hausdorff measure on $\partial\mathcal{P}$. The equality in \eqref{weakdude} is to be interpreted in the sense of distributions on $\mathcal{D}\times (-\infty, \infty)$. The (reasonably lengthy) derivation of this equation will be performed in full detail in section \ref{derivation} below. We state the precise definition of global-in-time weak solution of \eqref{weakdude} in section \ref{mainres} below. In addition, the BBGKY hierarchy associated to this equation is given by
\begin{equation}\label{beebeegeekay}
\left\{
\begin{array}{l}
\displaystyle \frac{\partial F^{(1)}}{\partial t}+(v\cdot\nabla_{x})F^{(1)}=C^{(1)}[F^{(2)}] \vspace{2mm}\\
\displaystyle \frac{\partial F^{(2)}}{\partial t}+(V\cdot\nabla_{X})F^{(2)}=C^{(2)}[F^{(2)}],
\end{array}
\right. \tag{BBGKY}
\end{equation}
where $C^{(1)}$ is a collision operator of the shape
\begin{equation*}
\begin{array}{c}
C^{(1)}[F]:= \\
\displaystyle \frac{1}{\sqrt{2}}\int_{\mathbb{R}^{3}}\int_{\mathbb{S}^{2}}\int_{\mathcal{C}^{+}(n; v)}\left(F(y, v, y+\varepsilon n, \ov{v})-F(y, v_{n}', y+\varepsilon n, \ov{v}_{n}')\right)\beta(v-\ov{v}, n)\,d\ov{v}dndy,
\end{array}
\end{equation*}
and where $v_{n}':=v-((v-\ov{v})\cdot n)n$, $\ov{v}_{n}':=\ov{v}+((v-\ov{v})\cdot n)n$, the map $\beta$ is a collision cross-section and $\mathcal{C}^{+}(n; v)\subset\mathbb{R}^{3}$ is a half-space of velocities to be defined in the sequel. Finally, the collision operator $C^{(2)}$ is simply $C$ given by \eqref{collop} above. 

Before we present our new method of construction and main results, let us mention briefly some of the major results in the literature which (i) tackle the construction of various notions of solution of the Liouville equation \eqref{weakdude}, (ii) tackle construction of some notion of solution of its associated BBGKY hierachy, and (iii) investigate the (in)ability of these equations to preserve the structure of so-called {\em chaotic} initial data.
\subsection{On Some Previously-established Results}
We shall not offer the reader a comprehensive overview of the literature on the Liouville equation for hard spheres and the hierarchies associated to it here, as this has been attempted many times before elsewhere. Indeed, we direct the reader to the thesis of \textsc{Simonella} \cite{serg}, or the monograph of \textsc{Gallagher, Saint-Raymond and Texier} \cite{gallagher2014newton} for a more systematic mapping of the literature. 

\subsubsection{Mild Solutions}
When considering the Liouville equation associated to Newton's equations of motion for hard spheres, it is typical to consider {\em mild} solutions thereof (as opposed to {\em distributional solutions} thereof). Indeed, using the terminology of \textsc{Cercignani, Illner and Pulvirenti} (\cite{cercignani2013mathematical}, page 69), we say a map $F_{N}$ is a mild solution of the Liouville equation for $N$ hard spheres corresponding to the initial datum $F_{N, 0}$ if and only if
\begin{equation*}
F_{N}(Z, t)=F_{N, 0}(T^{(N)}_{-t}Z),
\end{equation*}
where $\{T^{(N)}_{t}\}_{t\in\mathbb{R}}$ denotes the hard sphere flow defined (($\mathscr{L}_{3N}\mres\mathcal{D}_{N})\otimes\mathscr{L}_{3N}$-almost everywhere) on $\mathcal{D}_{N}$. With this notion of solution in place, one can in turn use mild solutions of the Liouville equation to write down what one means by a solution of its associated hierarchy. This is the approach taken in \textsc{Gallagher, Saint-Raymond and Texier} (\cite{gallagher2014newton}, chapter 4), amongst other places. The note of \textsc{Spohn} \cite{spohn2006integrated} also outlines an approach by which one can derive a mild formulation of the BBGKY hierarchy for hard spheres. 

To the knowledge of the author, it has not been demonstrated in the literature that mild solutions of the Liouville equation are in any sense equivalent to weak solutions thereof. This seems like a small distinction, however its establishment makes the derivation of the hierarchy entirely straightforward. Indeed, in the final section \ref{bbgkysection} of this article, we show that the derivation of the BBGKY hierarchy associated to the Liouville equation is almost immediate once one has a construction of weak solutions of \eqref{weakdude}. 
\subsubsection{Chaotic Solutions}
It is known that the dynamics of the hiearchy (in whichever sense is appropriate) does not preserve the structure of {\em chaotic} initial data for all times. See, for instance, the recent work of \textsc{Denlinger} \cite{denlinger2017structure}. Recently, \textsc{Pulvirenti and Simonella} \cite{pulvirenti2017boltzmann} have performed a detailed study of so-called {\em correlation errors} which measure the manner in which solutions of the hierarchy deviate from profiles which are in product form. In this article, we study the chaotic structure of weak solutions of the Liouville equation not at the level of functions on $\mathcal{D}\times (-\infty, \infty)$, but rather at the level of distributions on $\mathcal{D}\times (-\infty, \infty)$. We claim, but do not offer further explanation here, that this perspective is useful in the pursuit of the global-in-time Boltzmann-Grad limit of weak solutions of the BBGKY hierarchy.
\subsubsection{Contributions of this Article}
To the knowledge of the author, there does not exist a comprehensive existence and uniqueness theory of {\em global-in-time weak solutions} of the Liouville equation \eqref{weakdude} for hard spheres, even in the case of only $N=2$ hard spheres. The present article only tackles the case of two particles explicitly, but the methods introduced herein to do so may be extended to the case of three or more spheres. Let us now introduce the main intuitive idea behind our approach in this article.
\subsection{A Motivating Example: The Sinai Billiard}
Overall, our aim is to study hard sphere dynamics by eliminating collisions entirely. We illustrate the basic idea behind our approach to the hard sphere table $\mathcal{P}$ in this article by considering the well-studied {\bf Sinai billiard} in $\mathbb{R}^{2}$ (which is also known as the {\em Lorenz gas}). We invite the reader to the book of \textsc{Tabachnikov} \cite{tabachnikov2005geometry} for an introduction to the theory of abstract billiards. The Sinai billiard is a model for the kinetic energy-conserving trajectory $t\mapsto x(t)$ of a point particle governed by the evolution equations given formally by
\begin{equation}\label{pisheasy}
\frac{dx}{dt}=v\quad\text{and}\quad\frac{dv}{dt}=0,
\end{equation}
where $t\mapsto x(t)$ is constrained to evolve in the {\em Sinai billiard table} $\Omega\subset\mathbb{R}^{2}$ defined to be
\begin{equation*}
\Omega:=\left\{
y\in \mathbb{R}^{2}\,:\, |y_{i}|\leq 1\hspace{2mm}\text{for}\hspace{2mm}i=1, 2, \hspace{2mm}\text{and}\hspace{2mm}|y|\geq r
\right\},
\end{equation*}
with $1>r>0$ denoting the radius of the fixed {\em scatterer} at the centre of the box. In this article, we understand $[x, v]$ to be an energy-conserving global-in-time weak solution of the above equations \eqref{pisheasy} if and only if $x\in C^{0}((\infty, \infty), \Omega)$ and $v\in\mathrm{BV}_{\mathrm{loc}}((-\infty, \infty), \mathbb{R}^{2})$ satisfy the equations
\begin{equation*}
\frac{dx}{dt}=v \quad \text{and}\quad\frac{dv}{dt}=\mu_{(x_{0}, v_{0})}
\end{equation*}
in the sense of $\mathbb{R}^{2}$-valued distributions on $(-\infty, \infty)$, where $\mu_{(x_{0}, v_{0})}$ is an atomic measure on $(-\infty, \infty)$ supported on the set of {\em collision times} of the trajectory $x\mapsto x(t)$. In all that follows, we are not interested in proving qualitative properties of trajectories governed by \eqref{pisheasy} (such as ergodicity or recurrence, for example), but are interested {\em only} in the construction of weak solutions thereof. 
\subsubsection{Construction of Solutions}
From the point of view of the mathematical analyst, the available methods of construction of global-in-time weak solutions $[x, v]$ of \eqref{pisheasy} are few. One intuitive method by which to construct solutions is the so-called {\em method of surgery}. Indeed, by the `bending' of a straight line trajectory in $\mathbb{R}^{2}$ whenever it hits the boundary $\partial\Omega$ of the billiard table $\Omega$, for any $x_{0}\in \Omega$ and suitable $v_{0}\in\mathbb{R}^{2}$, it seems `obvious' that there exists a unique global-in-time weak solution of system \eqref{pisheasy}. Characterisation of the set of $[x_{0}, v_{0}]$-dependent collision times, namely
\begin{equation*}
\left\{t\in(-\infty, \infty)\,:\,x(t; [x_{0}, v_{0}])\in\partial\Omega\right\}
\end{equation*} 
given an initial datum $[x_{0}, v_{0}]\in\Omega\times\mathbb{R}^{2}$ is, however, not an easy task. As such, this method of construction of weak solutions is somewhat cumbersome.

Another approach, to be employed later in this article in the case of hard spheres, is to consider a construction of dynamics associated to \eqref{pisheasy} by way of an identification procedure on {\em two copies} of the Sinai billiard table $\Omega$ (see \textsc{Tabachnikov} \cite{tabachnikov2005geometry}, chapter 1). Beginning with the component $\Gamma_{1}$ of the boundary $\partial\Omega$ given by
\begin{equation*}
\Gamma_{1}:=\left\{
y\in\mathbb{R}^{2}\,:\, |y_{i}|=1\hspace{2mm}\text{for some}\hspace{2mm}i=1, 2
\right\}
\end{equation*}
and identifying  
\begin{equation*}
(-1, y_{2})\quad \text{with}\quad (1, y_{2}) \quad \text{for}\hspace{2mm}y_{2}\in[0, 1],
\end{equation*}
in addition to
\begin{equation*}
(y_{1}, -1)\quad \text{with}\quad (y_{1}, 1) \quad \text{for}\hspace{2mm}y_{1}\in[0, 1],
\end{equation*}
the resulting quotient space of each copy is homeomorphic to a genus 1 surface in $\mathbb{R}^{3}$ with boundary that is homeomorphic to $\mathbb{S}^{1}$. In turn, by considering the component $\Gamma_{2}$ of the boundary $\partial\Omega$ given by
\begin{equation*}
\Gamma_{2}:=\left\{
y\in\mathbb{R}^{2}\,:\,|y|=r
\right\}
\end{equation*}
and identifying 
\begin{equation*}
(y_{1}, y_{2})\in\Gamma_{2}\hspace{2mm}\text{on copy 1 with}\hspace{2mm} (-y_{1}, -y_{2})\hspace{2mm}\text{on copy 2}
\end{equation*}
along with
\begin{equation*}
(y_{1}, y_{2})\in\Gamma_{2}\hspace{2mm}\text{on copy 2 with}\hspace{2mm} (-y_{1}, -y_{2})\hspace{2mm}\text{on copy 1}
\end{equation*}
one produces a quotient space which is homeomorphic to a genus 2 surface $\mathcal{M}$ in $\mathbb{R}^{3}$. Moreover, subject to endowing this quotient space with a differentiable structure and its tangent bundle $T\mathcal{M}$ with a symplectic form, one can view the piecewise linear trajectories $t\mapsto x(t)$ on $\Omega$ as integral curves of a complete vector field on the tangent bundle adjunction manifold $\mathcal{M}$. However, rigorous works which faithfully map classical solutions on the auxiliary table $\mathcal{M}$ to weak solutions of the system \eqref{pisheasy} seem to be absent in the literature.

As is typical in the theory of dynamical billiards, there is more than one way by which one may build a suitable quotient space from a starting table. For instance, if the initial table is a square in $\mathbb{R}^{2}$, one may produce either a flat torus (an example of a compact auxiliary table), or the whole plane $\mathbb{R}^{2}$ (an example of a non-compact auxiliary table). With the case of the hard sphere table $\mathcal{P}$ in mind, it is more convenient to work with an identification procedure which does not lead to a compact phase space. We outline this now.
\subsubsection{Copying and Identifying the Sinai Billiard Table}
Instead of building an auxiliary table from only two copies of $\Omega$, one can build an auxiliary table that admits the structure of a smooth 2-dimensional adjunction manifold $\mathcal{N}$ by identifying countably-infinitely many copies of $\Omega$ with one another in a suitable manner. Rather than write out this procedure carefully, we simply refer the reader to Figure 1 above in which this formal procedure is articulated diagramatically. 

The major benefit in studying the evolution of a point particle on the adjunction manifold $\mathcal{N}$, as opposed to one on the Sinai table $\Omega$, is that the study of piecewise smooth dynamics becomes one of {\em smooth} dynamics. In particular, the Liouville equation associated to \eqref{pisheasy} takes the form of a {\em free transport equation} on the adjunction manifold tangent bundle $T\mathcal{N}$. Of course, the main challenge thereafter is in the faithful mapping of smooth solutions of the `Liouville equation' posed on $T\mathcal{N}$ to the Liouville equation associated to \eqref{pisheasy}. This is the problem we tackle for the hard sphere table $\mathcal{P}$ in the sequel.
\begin{figure}
\centering
\includegraphics[scale=0.05]{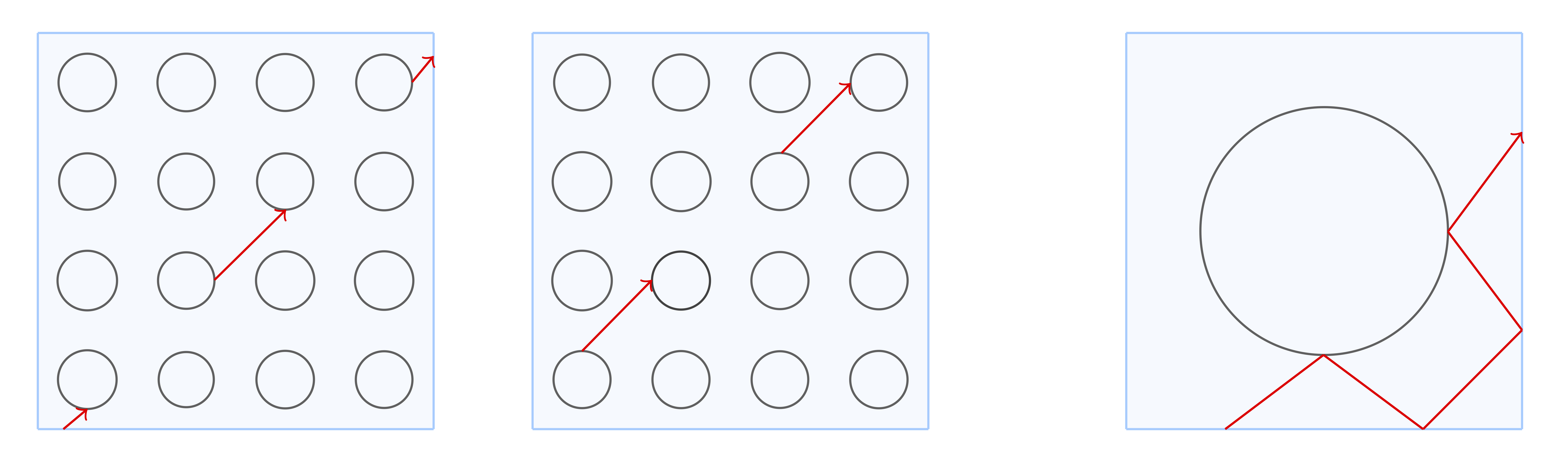}
\caption{The straight line trajectory of a point particle on the adjunction manifold $\mathcal{N}$ constructed from the periodic Lorenz gas table, and the corresponding trajectory with collisions on the Sinai table $\Omega\subset\mathbb{R}^{2}$.}
\end{figure}
\subsection{Main Results}\label{mainres}
In this section we present the main results of this article. In order that we may speak meaningfully of the objects we claim to construct, we begin with some basic definitions.
\subsubsection{Basic Definitions}
We recall the following common notion.
\begin{defn}[Symmetric Functions on $\mathcal{D}$]
Suppose $N\geq 2$. Let $\mathfrak{s}(N)$ denote the symmetric group on $N$ letters, whose elements $s\in\mathfrak{s}(N)$ admit the following natural action on $\mathcal{D}$:
\begin{equation*}
sZ:=[(x_{s(1)}, v_{s(1)}), ..., (x_{s(N)}, v_{s(N)})] \quad \text{for}\hspace{2mm}Z=[(x_{1}, v_{1}), ..., (x_{N}, v_{N})]\in\mathcal{D}.
\end{equation*}
We say a function $G:\mathcal{D}\rightarrow\mathbb{R}$ is {\bf symmetric} if and only if
\begin{equation*}
G(sZ)=G(Z)\quad \text{for all}\hspace{2mm}Z\in\mathcal{D}.
\end{equation*}
\end{defn}
Of course, from a modelling perspective, a probability distribution function $F_{0}$ on $\mathcal{D}$ which is symmetric is intended to model a system of {\em indistinguishable} particles. However, our notion of solution of the Liouville equation associated to $H_{\ast}$ does not require symmetry of its initial data. Indeed, we now define what we mean by a weak solution of the Liouville equation associated to the singular Hamiltonian.
\begin{defn}[Global-in-time Weak Solution of the Liouville Equation \eqref{weakdude}]\label{weaksoldef}
Suppose an initial distribution $F_{0}\in C^{0}(\mathcal{D})\cap L^{1}(\mathcal{D})$ satisfying 
\begin{equation*}
\int_{\mathcal{P}}\int_{\mathbb{R}^{6}}F_{0}(X, V)\,dVdX=1\quad \text{and}\quad F_{0}(X, V)\geq 0
\end{equation*}
is given. We say that $F\in C^{0}((-\infty, \infty), L^{1}(\mathcal{D}))$ is a physical {\bf global-in-time weak solution} of the Liouville equation \eqref{weakdude} if and only if
\begin{equation*}
\begin{array}{c}
\displaystyle \int_{\mathcal{P}}\int_{\mathbb{R}^{6}}\int_{-\infty}^{\infty}\left(\partial_{t}\Phi(X, V, t)+V\cdot\nabla_{X}\Phi(X, V, t)\right)F(X, V, t)\,dtdVdX= \vspace{2mm} \\
\displaystyle -\int_{\partial\mathcal{P}}\int_{\mathbb{R}^{6}}\int_{-\infty}^{\infty}F(Y, V, t)\Phi(Y, V, t)V\cdot\widehat{\nu}_{Y}\,dtdVd\mathscr{H}
\end{array}
\end{equation*}
holds true for all test functions $\Phi\in C^{1}_{c}(T\mathbb{R}^{6})$. Moreover, $F$ obeys the conservation of mass
\begin{equation*}
\int_{\mathcal{P}}\int_{\mathbb{R}^{6}}F(X, V, t)\,dVdX=1,
\end{equation*}
the conservation of linear momentum
\begin{equation*}
\int_{\mathcal{P}}\int_{\mathbb{R}^{6}}(v+\ov{v})F(X, V, t)\,dVdX=\int_{\mathcal{P}}\int_{\mathbb{R}^{6}}(v+\ov{v})F_{0}(X, V)\,dVdX,
\end{equation*}
the conservation of angular momentum
\begin{equation*}
\int_{\mathcal{P}}\int_{\mathbb{R}^{6}}(x\wedge v+\ov{x}\wedge\ov{v})F(X, V, t)\,dVdX=\int_{\mathcal{P}}\int_{\mathbb{R}^{6}}(x\wedge v+\ov{x}\wedge\ov{v})F_{0}(X, V)\,dVdX,
\end{equation*}
and the conservation of kinetic energy
\begin{equation*}
\frac{1}{2}\int_{\mathcal{P}}\int_{\mathbb{R}^{6}}|V|^{2}F(X, V, t)\,dVdX=\frac{1}{2}\int_{\mathcal{P}}\int_{\mathbb{R}^{6}}|V|^{2}F_{0}(X, V)\,dVdX
\end{equation*}
for all times $t\in (-\infty, \infty)$. Finally, $F(X, V, 0)=F_{0}(X, V)$ for all $[X, V]\in\mathcal{D}$.
\end{defn}
We now set out what we mean by a weak solution of the BBGKY hierarchy associated to the Liouville equation \eqref{weakdude}.
\begin{defn}[Global-in-time Weak Solution of the Hierarchy]
Suppose an initial datum $G_{0}\in C^{1}(\mathcal{D})\cap L^{1}(\mathcal{D})$ satisfying the symmetry
\begin{equation*}
G_{0}(sX, sV)=G_{0}(X, V)\quad \text{for all}\hspace{2mm}[X, V]\in\mathcal{D}
\end{equation*}
and each $s\in\mathfrak{s}(2)$ is given. We say that the pair of maps $(G^{(1)}, G^{(2)})$ with membership
\begin{equation*}
G^{(1)}\in C^{0}((-\infty, \infty), L^{1}(T\mathbb{R}^{3}))
\end{equation*}
and
\begin{equation*}
G^{(2)}\in C^{0}((-\infty, \infty), L^{1}(\mathcal{D}))
\end{equation*}
is a {\bf global-in-time weak solution} of the BBGKY hierarchy associated to the initial data
\begin{equation*}
G_{0}^{(1)}(x, v):=\int_{\mathbb{R}^{3}(x)}\int_{\mathbb{R}^{3}}G_{0}(X, V)\,d\ov{x}d\ov{v}\quad \text{for all}\hspace{2mm}[x, v]\in T\mathbb{R}^{3}
\end{equation*}
and
\begin{equation*}
G_{0}^{(2)}(X, V):=G_{0}(X, V)\quad \text{for all}\hspace{2mm}[X, V]\in\mathcal{D}
\end{equation*}
if and only if
\begin{equation*}
\begin{array}{c}
\displaystyle \int_{\mathbb{R}^{3}}\int_{\mathbb{R}^{3}}\int_{-\infty}^{\infty}\left(\partial_{t}+v\cdot\nabla_{x}\right)\psi(x, v, t)G^{(1)}(x, v, t)\,dtdvdx \vspace{2mm}\\
=\displaystyle -\frac{1}{\sqrt{2}}\int_{\mathbb{R}^{3}}\int_{\mathbb{S}^{2}}\int_{\mathcal{C}^{-}(n)}\int_{-\infty}^{\infty}\beta(v-\ov{v}, n)\psi(x, v, t)\bigg(G^{(2)}(x, v_{n}', x+\varepsilon n, \ov{v}_{n}', t) \vspace{2mm} \\
-G^{(2)}(x, v, x+\varepsilon n, \ov{v}, t)\bigg)\,dtdvd\ov{v}dndx
\end{array}
\end{equation*}
and
\begin{equation*}
\begin{array}{c}
\displaystyle \int_{\mathcal{P}}\int_{\mathbb{R}^{6}}\int_{-\infty}^{\infty}\left(
\partial_{t}+V\cdot\nabla_{X}
\right)\Psi(X, V, t)G^{(2)}(X, V, t)\,dtdVdX \vspace{2mm}\\
=\displaystyle -\int_{\partial\mathcal{P}}\int_{\mathbb{R}^{6}}\int_{-\infty}^{\infty}G^{(2)}(Y, V, t)\Psi(Y, V, t)V\cdot\widehat{\nu}(Y)\,dtdVdY
\end{array}
\end{equation*}
for all test functions $\psi\in C^{\infty}_{c}(T\mathbb{R}^{3}\times (-\infty, \infty))$ and $\Psi\in C^{\infty}_{c}(T\mathbb{R}^{6}\times(-\infty, \infty))$, where the set $\mathcal{C}^{-}(n)\subset\mathbb{R}^{6}$ is given by
\begin{equation*}
\mathcal{C}^{-}(n):=\left\{
V\in\mathbb{R}^{6}\,:\,V\cdot\frac{1}{\sqrt{2}}\left[
\begin{array}{c}
n \\
-n
\end{array}
\right]\geq 0
\right\}
\end{equation*}
and $\beta:\mathbb{R}^{3}\times\mathbb{S}^{2}\rightarrow\mathbb{R}$ denotes the {\em collision cross-section} given by
\begin{equation*}
\beta(w, n):=|w\cdot n|\quad \text{for all}\hspace{2mm}w\in\mathbb{R}^{3}, \vspace{2mm}n\in\mathbb{S}^{2}.
\end{equation*}
\end{defn}
We also require the following notion of {\em chaos} for probability distribution functions.
\begin{defn}[Chaos]
We say that a map $F_{0}:\mathcal{D}\rightarrow\mathbb{R}$ is {\bf chaotic} on $\mathcal{D}$ if and only if there exists $\phi_{0}:T\mathbb{R}^{3}\rightarrow\mathbb{R}$ such that
\begin{equation*}
F_{0}(X, V)=\phi_{0}(x, v)\phi_{0}(\ov{x}, \ov{v})\quad \text{for all}\hspace{2mm}[X, V]\in\mathcal{D}.
\end{equation*}
Similarly, we say that a map $F:\mathcal{D}\times (-\infty, \infty)\rightarrow\mathbb{R}$ is chaotic if and only if there exists $\phi:T\mathbb{R}^{3}\times (-\infty, \infty)\rightarrow\mathbb{R}$ such that
\begin{equation*}
F(X, V, t)=\phi(x, v, t)\phi(\ov{x}, \ov{v}, t)
\end{equation*}
for all $[X, V]\in\mathcal{D}$ and $t\in(-\infty, \infty)$.
\end{defn}
We contrast this with the notion of {\em Schwartz chaos}.
\begin{defn}[Schwartz Chaos]\label{schwartzchaos}
We say that $F\in L^{1}_{\mathrm{loc}}(\mathcal{D}\times (-\infty, \infty))$ is {\bf Schwartz chaotic} on $\mathcal{D}\times (-\infty, \infty)$ if and only if there exists a continuous linear operator $W:\mathscr{S}\rightarrow \mathscr{S}'$ such that
\begin{equation*}
F=W'(f\otimes f)\quad \text{in}\hspace{2mm}\mathscr{S}',
\end{equation*}
where $f:T\mathbb{R}^{3}\times (-\infty, \infty)$ is a global-in-time classical solution of the free transport equation 
\begin{equation*}
\partial_{t}f+(p\cdot\nabla_{q})f=0 \quad \text{on}\hspace{2mm}T\mathbb{R}^{3}
\end{equation*}
for $[q, p]\in T\mathbb{R}^{3}$, where $\mathscr{S}$ denotes the class of tempered distributions on $T\mathbb{R}^{6}$.
\end{defn} 
\subsubsection{Statement of Results}
With the above definitions in place, we are ready to state the main results of this work. In section \ref{free}, we prove the following theorem without recourse to the hard sphere flow or to {\em a priori} estimates associated to \eqref{weakdude}.
\begin{thm}[Existence and Uniqueness of Global-in-time Weak Solutions of the Liouville Equation]\label{existandunique}
For any $F_{0}\in C^{1}(\mathcal{D})\cap L^{1}(\mathcal{D})$, there exists a unique physical global-in-time weak solution of \eqref{weakdude}.
\end{thm}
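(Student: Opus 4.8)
The guiding idea is to \emph{unfold} the table along its boundary so that \eqref{weakdude} becomes a free transport equation with no boundary at all, on which existence and uniqueness are classical, and then to transport the result back to $\mathcal{D}$. The one structural fact that makes $N=2$ tractable is a geometric one, which I would establish first: \emph{every maximally extended billiard trajectory in $\mathcal{P}$ --- straight-line motion with specular reflection of $V$ in $\widehat{\nu}_{X}$ whenever $X$ meets $\partial\mathcal{P}$ --- collides at most once over all of $(-\infty,\infty)$.} Indeed the complement of $\mathcal{P}$, namely $\{[x,\ov{x}]:|x-\ov{x}|<\varepsilon\}$, is open and convex; writing $\widehat{\nu}_{Y}=\frac{1}{\sqrt{2}}[-\widehat{n},\widehat{n}]$ with $\widehat{n}:=(y-\ov{y})/\varepsilon\in\mathbb{S}^{2}$ and $V^{+}:=V-2(V\cdot\widehat{\nu}_{Y})\widehat{\nu}_{Y}$ the post-collision velocity, a direct computation gives $|x(s)-\ov{x}(s)|^{2}=\varepsilon^{2}-2\sqrt{2}\,\varepsilon\,(V^{+}\cdot\widehat{\nu}_{Y})\,s+|v^{+}-\ov{v}^{+}|^{2}s^{2}$ along the outgoing ray $Y+sV^{+}$, which is $\geq\varepsilon^{2}$ for every $s\geq 0$ because $V^{+}\in\Sigma_{Y}$; running the same estimate backwards in time controls the incoming ray, and convexity of the obstacle rules out any further incidence. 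Consequently the ``broken flow'' is, Lebesgue-almost everywhere on $\mathcal{D}$, a genuine one-parameter group $\{\Psi_{t}\}_{t\in\mathbb{R}}$ (free flow, corrected by the unique reflection if and when it occurs); each $\Psi_{t}$ is Lebesgue-measure preserving (unit Jacobian for free flow and for a reflection) and conserves $|V|^{2}$, $v+\ov{v}$ and $x\wedge v+\ov{x}\wedge\ov{v}$ along trajectories --- the last because at a collision the two impulses are equal, opposite, and parallel to $y-\ov{y}$, so the change in total angular momentum is $(y-\ov{y})\wedge(\text{scalar}\cdot(y-\ov{y}))=0$.

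With this in hand I would \textbf{construct} the candidate solution by $F(X,V,t):=F_{0}(\Psi_{-t}(X,V))$; this is exactly the reflection-type operator $\mathsf{R}$ of the abstract in the case $N=2$. Measure preservation gives $\|F(\cdot,t)\|_{L^{1}(\mathcal{D})}=\|F_{0}\|_{L^{1}(\mathcal{D})}$ for all $t$, hence the mass conservation in Definition~\ref{weaksoldef}; a standard three-$\varepsilon$ argument --- $\Psi_{-t'}(X,V)\to\Psi_{-t}(X,V)$ for Lebesgue-a.e.\ $(X,V)$ as $t'\to t$, the exceptional set being the codimension-one set of trajectories that collide exactly at time $t$, together with density of $C_{c}$ functions and the $L^{1}$ isometry --- yields $F\in C^{0}((-\infty,\infty),L^{1}(\mathcal{D}))$. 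The conservation of momentum, angular momentum and energy then follows from the trajectory-wise conservation above; $C^{1}$ regularity of $F_{0}$ propagates to continuity of $F$ away from the (measure-zero) collision manifold and to the existence of one-sided traces of $F$ on $\partial\mathcal{P}$ that make the boundary integral meaningful; and $F(\cdot,\cdot,0)=F_{0}$ because $\Psi_{0}=\mathrm{id}$.

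The substance of the theorem is that this $F$ satisfies the integral identity of Definition~\ref{weaksoldef}. I would change variables in the left-hand side by the flow $(X,V)=\Psi_{t}(Y_{0},V_{0})$ (Jacobian $1$), under which $F$ becomes the constant $F_{0}(Y_{0},V_{0})$ and the integrand $\partial_{t}\Phi+V\cdot\nabla_{X}\Phi$ becomes the total derivative $\frac{d}{dt}\big(\Phi\circ\Psi_{t}\big)(Y_{0},V_{0})$ wherever the trajectory is smooth. The $t$-integral of this total derivative vanishes on non-colliding trajectories by compact support of $\Phi$, and on the colliding set it equals the jump of $\Phi$ across the unique collision time --- present because $t\mapsto\Psi_{t}(Y_{0},V_{0})$ is continuous in position but discontinuous in velocity there. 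Re-parametrising the colliding subset of $\mathcal{D}$ by the collision data $(Y,W,\tau)\in\partial\mathcal{P}\times\mathbb{R}^{6}\times\mathbb{R}$ via $(Y_{0},V_{0})=(Y-\tau W,W)$ introduces the coarea Jacobian $|W\cdot\widehat{\nu}_{Y}|$, and using the reflection-invariance of the trace of $F$ on $\partial\mathcal{P}$ under $W\mapsto W-2(W\cdot\widehat{\nu}_{Y})\widehat{\nu}_{Y}$ (the mass-transport boundary condition, automatic from the construction of $\Psi$) the incoming and outgoing contributions reassemble into precisely $-\int_{\partial\mathcal{P}}\int_{\mathbb{R}^{6}}\int_{-\infty}^{\infty}F(Y,V,t)\Phi(Y,V,t)\,V\cdot\widehat{\nu}_{Y}\,dt\,dV\,d\mathscr{H}(Y)$. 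I expect this bookkeeping --- keeping the signs, the Jacobian and the identification of the $\Phi$-jump with the flux exactly consistent, and verifying that the measure-zero set of grazing incidences is genuinely negligible --- to be the \textbf{main obstacle}; it is, in effect, the assertion that the collision term in \eqref{weakdude} is nothing but the flux across the gluing locus of the unfolded phase space.

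For \textbf{uniqueness} I would recast the previous paragraph as an equivalence: a map $G\in C^{0}((-\infty,\infty),L^{1}(\mathcal{D}))$ with initial datum $F_{0}$ solves \eqref{weakdude} if and only if its reflection-unfolding $\widehat{G}:=G\circ p$ solves the \emph{free} transport equation $\partial_{t}\widehat{G}+V\cdot\nabla_{X}\widehat{G}=0$ in the sense of distributions on $T\mathcal{N}\times(-\infty,\infty)$ with datum $F_{0}\circ p$, where $\mathcal{N}$ is the boundaryless manifold obtained by gluing two copies of $\mathcal{P}$ along $\partial\mathcal{P}$ through the reflection --- two copies suffice precisely by the at-most-one-collision lemma, which also makes the free flow on $T\mathcal{N}$ complete --- and $p$ folds the full velocity space onto $\Sigma_{Y}$ over $\partial\mathcal{P}$. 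The forward implication is the computation above read in reverse, the boundary term of \eqref{weakdude} being exactly what cancels the flux through the gluing locus. Since the transport field on $T\mathcal{N}$ is smooth, complete and divergence-free, its distributional $L^{1}$ solutions with prescribed initial data are unique by the classical method of characteristics, so $\widehat{G}$, hence $G$, is unique; and the same equivalence re-derives existence, the classical free-transport solution $(F_{0}\circ p)\circ(\text{backward free flow})$ on $T\mathcal{N}$ descending through $p$ to the $F$ constructed above. Thus $F$ is the unique physical global-in-time weak solution of \eqref{weakdude}.
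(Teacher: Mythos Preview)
Your proposal is correct and lands on the same candidate solution as the paper --- the mild solution $F(X,V,t)=F_{0}(T_{-t}Z)$ built from the explicit two-body flow with at most one collision --- but the bookkeeping you use to verify the weak formulation is organised differently. The paper works in an ``Eulerian'' way: it decomposes $I(\Phi)=\int F\,\partial_{t}\Phi$ and $J(\Phi)=\int F\,(V\cdot\nabla_{X})\Phi$ separately over the velocity cones $\mathcal{C}^{\pm}(X)$ and the time regions $\{-t\lessgtr\tau(X,V)\}$, then invokes the transport identities $(V\cdot\nabla_{X})\tau=-1$ and $(V\cdot\nabla_{X})\sigma_{\ast}=0$, the Reynolds Transport Theorem, and the Divergence Theorem to see that the interior terms cancel and only the surface integral over $\partial\mathcal{P}$ survives. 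Your route is the ``Lagrangian'' dual: push forward by the flow so that $\partial_{t}\Phi+V\cdot\nabla_{X}\Phi$ becomes a total $t$-derivative along trajectories, and read off the boundary term as the jump of $\Phi$ across the unique collision, re-parametrised by collision data with the coarea factor $|V\cdot\widehat{\nu}_{Y}|$. Both computations are equivalent; yours is more conceptual and transfers more readily to other billiard tables, while the paper's makes the role of the identities \eqref{transportidentity} and the $\sigma_{\ast}$-calculus fully explicit.

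On uniqueness you actually go further than the paper does. The paper develops the doubled table $\mathcal{P}\sqcup\mathcal{P}$ and the free-transport correspondence in Section~\ref{free} in order to exhibit the Schwartz-chaos structure $F=R'\mathfrak{f}$, but it does not spell out a uniqueness argument for \eqref{weakdude}. Your idea --- unfold a putative weak solution $G$ to a distributional solution $\widehat{G}$ of free transport on the boundaryless adjunction space, where uniqueness is classical for the complete divergence-free field --- is exactly in the spirit of that section and supplies the missing half of the theorem. The only point to be careful with is that the equivalence ``$G$ solves \eqref{weakdude} $\Longleftrightarrow$ $\widehat{G}$ solves free transport on $T\mathcal{N}$'' requires you to check that the boundary term $C[G]$ is precisely the gluing flux; in the paper's language this is the statement that $\Sigma_{\ast}$ is a measure-preserving involution of $\mathcal{D}$ fixing $\partial\mathcal{D}$, which the paper records but you should invoke explicitly.
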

We also have the following structural result which we view as being akin to the {\bf propagation of chaos} in the space of tempered distributions on $T\mathbb{R}^{6}\times (-\infty, \infty)$ for weak solutions of the Liouville equation.
\begin{thm}[Propagation of Schwartz Chaos for $N=2$ Hard Spheres]\label{proppythm}
Suppose an initial datum $F_{0}\in C^{1}(\mathcal{D})\cap L^{1}(\mathcal{D})$ is taken to be of the form
\begin{equation*}
F_{0}(X, V)=\phi_{0}(x, v)\phi_{0}(\ov{x}, \ov{v})\quad \text{for all}\hspace{2mm}[X, V]\in\mathcal{D},
\end{equation*}
for some $\phi_{0}\in C^{1}(T\mathbb{R}^{3})$. The unique physical global-in-time weak solution $F$ of \eqref{weakdude} corresponding to $F_{0}$ is Schwartz chaotic on $\mathcal{D}\times (-\infty, \infty)$.
\end{thm}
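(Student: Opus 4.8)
The plan is to build, by geometric unfolding, an explicit function $\widetilde{F}$ that is manifestly Schwartz chaotic, to verify that $\widetilde{F}$ is a physical global-in-time weak solution of \eqref{weakdude}, and then to invoke the uniqueness half of Theorem~\ref{existandunique} to conclude $F=\widetilde{F}$.

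Let $f(q,p,t):=\phi_{0}(q-pt,p)$ be the global-in-time classical solution of $\partial_{t}f+(p\cdot\nabla_{q})f=0$ on $T\mathbb{R}^{3}$ with datum $\phi_{0}$; since $\phi_{0}\in C^{1}(T\mathbb{R}^{3})$ this is well defined, and $(f\otimes f)(x,v,\ov{x},\ov{v},t)=\phi_{0}(x-vt,v)\,\phi_{0}(\ov{x}-\ov{v}t,\ov{v})$ is just $F_{0}$ transported by the free two-particle flow $\Phi^{\mathrm{free}}_{t}:[X,V]\mapsto[X+Vt,V]$, i.e.\ $(f\otimes f)(\cdot,t)=F_{0}\circ\Phi^{\mathrm{free}}_{-t}$ and $(f\otimes f)(\cdot,0)=F_{0}$. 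Let $\{T_{t}\}_{t\in(-\infty,\infty)}$ denote the hard sphere flow on $\mathcal{D}$ (defined almost everywhere). Define the Borel map
\[
\Xi:\mathcal{D}\times(-\infty,\infty)\longrightarrow T\mathbb{R}^{6}\times(-\infty,\infty),\qquad \Xi(Z,t):=\big(\Phi^{\mathrm{free}}_{t}(T_{-t}Z),\,t\big),
\]
and set $\widetilde{F}:=(f\otimes f)\circ\Xi$. Unwinding the definitions gives $\widetilde{F}(Z,t)=F_{0}(\Phi^{\mathrm{free}}_{-t}\Phi^{\mathrm{free}}_{t}T_{-t}Z)=F_{0}(T_{-t}Z)$, so $\widetilde{F}$ is exactly the mild solution; the first task is to promote this to a genuine weak solution in the sense of Definition~\ref{weaksoldef}, namely to verify the weak formulation (including the boundary flux term) together with the four conservation laws — the conservation laws being straightforward because $\Xi$ acts by Euclidean isometries and preserves phase volume, and the weak formulation being the mild-to-weak passage.

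The substance, and what I expect to be the main obstacle, is the analysis of $\Xi$. Passing to centre-of-mass and relative coordinates $[c,r,P,w]:=[\tfrac{1}{2}(x+\ov{x}),\,x-\ov{x},\,v+\ov{v},\,v-\ov{v}]$, the hard sphere flow factorises as free flow in $(c,P)$ times the billiard flow $\psi_{t}$ in $\{|r|\ge\varepsilon\}\times\mathbb{R}^{3}$ with specular reflection $w\mapsto w-2(w\cdot\widehat{r})\widehat{r}$ at $|r|=\varepsilon$ — which in the original variables is precisely the hard sphere scattering map $V\mapsto(I-2\,\widehat{\nu}_{X}\otimes\widehat{\nu}_{X})V$. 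The decisive fact is that every trajectory of $\psi_{t}$ meets $\{|r|=\varepsilon\}$ \emph{at most once} over all of time: along any free segment $t\mapsto|r(t)|$ is convex, and computing its one-sided derivatives at a reflection instant shows it is strictly increasing on both sides of that instant, ruling out any second collision. This reduces $\Xi$ to a completely explicit piecewise-affine map: the identity off the semialgebraic ``collision shadow'' $\{(Z,t):$ the relative trajectory through $Z$ undergoes its single collision at a time strictly between $0$ and $t\}$, and, on that set, the Euclidean reflection of $(r,w)$ across the hyperplane through the explicitly computable collision point $r_{\ast}$ with normal $n_{\ast}=r_{\ast}/\varepsilon$ (acting as the identity on $c,P$ and on $t$). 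In particular $\Xi$ is a measure-preserving Borel bijection of $\mathcal{D}\times(-\infty,\infty)$ onto $\{(Z',t):\Phi^{\mathrm{free}}_{-t}Z'\in\mathcal{P}\}$ with inverse $(Z',t)\mapsto(T_{t}\Phi^{\mathrm{free}}_{-t}Z',t)$, measure preservation following from volume-preservation of $\Phi^{\mathrm{free}}_{t}$ together with Liouville's theorem for billiards (the collision map being a reflection). It is exactly this one-collision phenomenon that lets the unfolded dynamics live on all of $T\mathbb{R}^{6}$ with no obstacle, and it is where the restriction to $N=2$ enters; for the weak-formulation check, accounting of the boundary flux $\int_{\partial\mathcal{P}}\!\int F\,V\cdot\widehat{\nu}_{Y}$ against the single reflection is the point requiring care.

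Finally, the Schwartz-chaos packaging. Define $W:\mathscr{S}(T\mathbb{R}^{6}\times(-\infty,\infty))\to\mathscr{S}'$ by $(W\Psi)(Z',t):=\Psi(\Xi^{-1}(Z',t))$ for $(Z',t)$ in the image of $\Xi$ and $0$ otherwise. Since $\Xi^{-1}$ is a piecewise Euclidean isometry and $\Xi(\mathrm{supp}\,\Psi)$ is bounded, $W\Psi\in L^{\infty}$ is compactly supported with $\|W\Psi\|_{L^{\infty}}\le\|\Psi\|_{L^{\infty}}$, so $W$ is a continuous linear map into $L^{\infty}\hookrightarrow\mathscr{S}'$. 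Because $\Xi$ is a measure-preserving Borel bijection onto its image, the change of variables $Z\mapsto\Xi(Z,t)$ yields, for every test function $\Psi$,
\[
\langle \widetilde{F},\Psi\rangle=\int_{\mathcal{D}\times\mathbb{R}}(f\otimes f)(\Xi(Z,t))\,\Psi(Z,t)\,dZ\,dt=\int_{T\mathbb{R}^{6}\times\mathbb{R}}(f\otimes f)(Z',t)\,(W\Psi)(Z',t)\,dZ'\,dt=\langle f\otimes f,W\Psi\rangle,
\]
i.e.\ $\widetilde{F}=W'(f\otimes f)$ in $\mathscr{S}'$ with $f$ a global-in-time classical solution of the one-particle free transport Liouville equation, which is Schwartz chaos in the sense of Definition~\ref{schwartzchaos}; here $W'$ (equivalently, composition with $\Xi$) is the reflection-type operator of the introduction, built from the scattering matrix $I-2\,\widehat{\nu}_{X}\otimes\widehat{\nu}_{X}$. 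Combined with the uniqueness assertion of Theorem~\ref{existandunique}, which forces $F=\widetilde{F}$, this shows $F$ is Schwartz chaotic on $\mathcal{D}\times(-\infty,\infty)$.
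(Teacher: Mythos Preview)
Your argument is correct and close in spirit to the paper's, but the packaging is genuinely different and worth contrasting. The paper unfolds hard sphere dynamics by passing to a \emph{doubled} table $\mathcal{P}\sqcup\mathcal{P}$, defining a free-transport flow $\{S_{t}\}$ there, and then building the reflection operator as a composite $\mathsf{R}=R'\circ\mathfrak{S}$, where $R$ evaluates a test function simultaneously at $(X,V)$ and at its $\sigma_{\ast}$-reflected image, and $\mathfrak{S}$ identifies mild solutions on the doubled space with classical free-transport solutions on $T\mathbb{R}^{6}$. You bypass the doubled space entirely by writing down the single unfolding map $\Xi(Z,t)=(\Phi^{\mathrm{free}}_{t}T_{-t}Z,t)$ and taking $W$ to be pullback along $\Xi^{-1}$; the one-collision fact you prove in relative coordinates is exactly what makes $\Xi$ a piecewise-isometric bijection, and your change-of-variables identity $\langle\widetilde F,\Psi\rangle=\langle f\otimes f,W\Psi\rangle$ is the direct analogue of the paper's $\langle\mathfrak{f},R\Phi\rangle=\langle F,\Phi\rangle$. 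Your route is more elementary and makes the role of the scattering matrix transparent without the auxiliary adjunction-manifold apparatus; the paper's doubled-space formulation, on the other hand, is set up with the extension to $N\geq 3$ in mind, where a single global unfolding map like your $\Xi$ is no longer available.

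Two small points. First, your mild-to-weak step (that $\widetilde F(Z,t)=F_{0}(T_{-t}Z)$ satisfies the weak formulation with the boundary flux term) is precisely the content of Section~\ref{derivation}, so you are implicitly citing the paper's derivation rather than re-doing it; that is fine, but it is the same work either way. Second, a minor slip: Schwartz functions need not be compactly supported, so ``$\Xi(\mathrm{supp}\,\Psi)$ is bounded'' is not generally true. This does not matter for your argument, since $\|W\Psi\|_{L^{\infty}}\leq\|\Psi\|_{L^{\infty}}$ already gives $W\Psi\in L^{\infty}\hookrightarrow\mathscr{S}'$ and continuity of $W$ in the Schwartz topology.
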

Finally, as a straightfoward consequence of theorem \ref{existandunique}, we are able to prove the following result on the global-in-time existence of weak solutions of the BBGKY hierarchy associated to the Liouville equation \eqref{weakdude}.
\begin{cor}[Existence of Global-in-time Weak Solutions of the BBKGY Hierarchy]\label{beebeethm}
Suppose an initial datum $F_{0}\in C^{1}(\mathcal{D})$ is given. There exists an associated global-in-time weak solution of the BBGKY hierarchy which is realised as the average of the unique global-in-time weak solution of the Liouville equation \eqref{weakdude}.
\end{cor}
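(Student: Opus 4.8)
The plan is to produce the required weak solution of \eqref{beebeegeekay} directly from the object already furnished by Theorem~\ref{existandunique}, rather than constructing anything new. Write $F$ for the unique physical global-in-time weak solution of \eqref{weakdude} associated with $F_{0}$ (replacing $F_{0}$ by its symmetrisation $\frac{1}{2}(F_{0}(X,V)+F_{0}(sX,sV))$ over the transposition $s\in\mathfrak{s}(2)$ if it is not already symmetric, and noting that $F_{0}\in L^{1}(\mathcal{D})$ so that Theorem~\ref{existandunique} applies). Set $G^{(2)}:=F$, and let $G^{(1)}$ be its first marginal,
\[
G^{(1)}(x,v,t):=\int_{\{|\ov{x}-x|\ge\varepsilon\}}\int_{\mathbb{R}^{3}}F(x,v,\ov{x},\ov{v},t)\,d\ov{v}\,d\ov{x}.
\]
By Fubini's theorem and $F\in C^{0}((-\infty,\infty),L^{1}(\mathcal{D}))$ one has $G^{(1)}\in C^{0}((-\infty,\infty),L^{1}(T\mathbb{R}^{3}))$ with $G^{(1)}(\cdot,0)=G_{0}^{(1)}$, so that the regularity and initial-data requirements in the definition of a weak solution of the hierarchy are met; moreover the two-particle equation of the hierarchy is, upon inspection, the weak formulation of \eqref{weakdude} with $\Psi$ in place of $\Phi$, so it holds for $G^{(2)}=F$ with nothing further to prove.

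The content of the corollary therefore lies in the one-particle equation of \eqref{beebeegeekay}, which I would derive by testing the weak formulation of \eqref{weakdude} against functions of product type $\Phi(X,V,t)=\psi(x,v,t)\chi_{R}(\ov{x},\ov{v})$, with $\psi\in C^{\infty}_{c}(T\mathbb{R}^{3}\times(-\infty,\infty))$ and $\chi_{R}(\ov{x},\ov{v})=\eta(\ov{x}/R)\eta(\ov{v}/R)$ for a fixed radial $\eta\in C^{\infty}_{c}(\mathbb{R}^{3})$ with $0\le\eta\le1$, equal to $1$ on the unit ball and supported in the ball of radius $2$. On the left-hand side, $V\cdot\nabla_{X}\Phi=(v\cdot\nabla_{x}\psi)\chi_{R}+\psi\,(\ov{v}\cdot\nabla_{\ov{x}}\chi_{R})$; by dominated convergence (using that $t\mapsto F(\cdot,t)$ is continuous into $L^{1}(\mathcal{D})$, hence bounded and uniformly integrable on the compact time-support of $\psi$), the first group of terms converges as $R\to\infty$ to $\int_{\mathbb{R}^{3}}\int_{\mathbb{R}^{3}}\int_{-\infty}^{\infty}(\partial_{t}\psi+v\cdot\nabla_{x}\psi)G^{(1)}$, which is the desired left-hand side. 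The remaining term is bounded, for a constant $C$ independent of $R$ (here one uses $|\ov{v}|\le 2R$ on $\mathrm{supp}(\nabla_{\ov{x}}\chi_{R})$, so that $|\ov{v}\cdot\nabla_{\ov{x}}\chi_{R}|\le C$), by $C$ times the mass of $|F|$ on the expanding shell $\{R\le|\ov{x}|\le 2R\}$ over the time-support of $\psi$, and hence tends to $0$ as $R\to\infty$.

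It remains to identify the boundary term. I would parametrise $\partial\mathcal{P}=\{[y,y+\varepsilon n]:y\in\mathbb{R}^{3},\ n\in\mathbb{S}^{2}\}$, disintegrate the Hausdorff measure over this parametrisation as $d\mathscr{H}=\sqrt{2}\,\varepsilon^{2}\,dn\,dy$, and record that $\widehat{\nu}_{[y,y+\varepsilon n]}=\frac{1}{\sqrt{2}}[n,-n]$, so that $V\cdot\widehat{\nu}_{Y}=\frac{1}{\sqrt{2}}(v-\ov{v})\cdot n$ and $|V\cdot\widehat{\nu}_{Y}|=\frac{1}{\sqrt{2}}\beta(v-\ov{v},n)$. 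Splitting the velocity integral along the sign of $V\cdot\widehat{\nu}_{Y}$, performing on one of the two pieces the unit-Jacobian linear involution $V\mapsto V-2(V\cdot\widehat{\nu}_{Y})\widehat{\nu}_{Y}=[v_{n}',\ov{v}_{n}']$ (which interchanges the two half-spaces) together with the reflection $n\mapsto-n$ of the impact vector, and then invoking the reflection behaviour of the boundary trace of $F$ that emerges from the geometric construction underlying Theorem~\ref{existandunique}, I would collapse the surface flux, after letting $R\to\infty$, to the right-hand side of the one-particle equation of \eqref{beebeegeekay} with $G^{(2)}$ inserted: the difference (gain minus loss) structure appearing there is precisely the decomposition of the flux into its outgoing and incoming contributions, the outgoing one re-expressed via the collisional change of variables, while the constants $\sqrt{2}$ (from $\widehat{\nu}_{Y}$) and $\varepsilon^{2}$ (from $\mathscr{H}$) reconcile with the normalisation of $C^{(1)}$.

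The step I expect to be the crux is precisely this last identification. One must make sense of the boundary trace of the weak solution $F$, in a weak sense adapted to the construction in Theorem~\ref{existandunique} rather than via a generic trace theorem, and verify that it reproduces the correct gain/loss structure; and one must establish the uniform-in-$R$ absolute integrability of the surface flux $\int_{\partial\mathcal{P}}\int_{\mathbb{R}^{6}}|F|\,|V\cdot\widehat{\nu}_{Y}|$ over the time-support of $\psi$ that is needed in order to pass $R\to\infty$ inside the surface and velocity integrations. Granting these two points, the pair $(G^{(1)},G^{(2)})$ is, by construction, the asserted global-in-time weak solution of \eqref{beebeegeekay}, realised as the average of the unique weak solution of \eqref{weakdude}.
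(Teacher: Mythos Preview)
Your proposal is essentially the paper's own argument: set $G^{(2)}=F$, define $G^{(1)}$ as its first marginal, and obtain the one-particle equation by inserting into the weak Liouville identity a test function that depends only on $(x,v,t)$, then rewrite the surface flux in gain--loss form via the velocity involution $V\mapsto V-2(V\cdot\widehat{\nu}_{Y})\widehat{\nu}_{Y}$ after parametrising $\partial\mathcal{P}$ by $(y,n)\in\mathbb{R}^{3}\times\mathbb{S}^{2}$. The one substantive difference is a technical shortcut on the paper's side: in its proof the paper takes $F_{0}\in C^{1}_{c}(T\mathbb{R}^{6})$ with $\mathrm{supp}\,F_{0}\subseteq\mathcal{D}$ (a mild strengthening of the corollary's stated hypothesis), so that $F(\cdot,t)$ has compact support for all times and one may insert $\Phi(X,V,t)=\psi(x,v,t)$ directly into the weak formulation, with no cutoff $\chi_{R}$ and no limit $R\to\infty$. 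This disposes at once of both points you flag as the crux --- uniform integrability of the surface flux and the behaviour of the boundary trace --- since all integrals are then over compact sets and the trace is classical. The paper's handling of the boundary term is correspondingly brisker than yours, invoking only the velocity change of variables on one half-space, without the additional reflection $n\mapsto -n$ or any reflection property of the trace of $F$; your more cautious route is not wrong, but under the compact-support hypothesis it is more machinery than the paper deems necessary.
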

\subsection{Structure of the Paper}
In section \ref{coney}, we introduce and deduce basic properties of an important structure in phase space, namely {\em collision velocity cones}. We also define the {\em collision time map} and the {\em Boltzmann scattering map} on an important phase sub-bundle. Using these objects, we are then able to write down an explicit form for the hard sphere flow $\{T_{t}\}_{t\in\mathbb{R}}$ on $\mathcal{D}$. In section \ref{derivation}, we use our results of the previous section to derive -- in a systematic manner -- the correct weak formulation of the Liouville equation associated to $H_{\ast}$ above. In section \ref{free}, we introduce our new method of construction of global-in-time weak solutions of \eqref{weakdude}, in the process demonstrating that solutions which begin from chaotic initial data on $\mathcal{D}$ are in fact {\em Schwartz chaotic} in the sense of definition \ref{schwartzchaos} above. In section \ref{bbgkysection}, we derive the BBGKY hierarchy directly from the weak formulation of the Liouville equation by simple judicious choices of test function.
\subsection{Notation}
We employ the square bracket notation $[\cdot, \cdot]:\mathbb{R}^{K}\times\mathbb{R}^{L}\rightarrow\mathbb{R}^{K+L}$ throughout. We write $\Pi_{1}:\mathcal{D}\rightarrow\mathcal{P}$ to denote the spatial projection operator $\Pi_{1}Z:=X$, and $\Pi_{2}:\mathcal{D}\rightarrow$ $\mathbb{R}^{6}$ to denote the velocity projection operator $\Pi_{2}Z:=V$ for all $Z=[X, V]\in\mathcal{D}$. We write $C^{k}(\mathcal{D})$ to denote the set of all bounded $k$-times continuously-differentiable maps on $\mathcal{D}$, while $L^{1}(\mathcal{D})$ denotes the set of Lebesgue-integrable functions ({\em not} equivalence classes of functions). We write $\mathsf{M}(\Omega)$ to denote the set of Borel measures on a subset $\Omega\subseteq\mathbb{R}^{M}$. 
\section{Collision Velocity Cones, Collision Time Maps and Scattering Maps}\label{coney}
The aim of the following section is to follow more traditional lines in the approach to the Liouville equation and write down, in an explicit way, the hard sphere flow $\{T_{t}\}_{t\in\mathbb{R}}$ on the hard sphere phase space $\mathcal{D}$. Let us now define the object we aim to construct. We refer the reader to \textsc{Wilkinson} \cite{wilkinson2018convergence} for the precise definition of physical weak solution of Newton's equations of motion.
\begin{defn}[Hard Sphere Flow on $\mathcal{D}$]\label{hardflow}
The {\bf hard sphere flow} $\{T_{t}\}_{t\in\mathbb{R}}$ on $\mathcal{D}$ is the one-parameter family of operators $T_{t}:\mathcal{D}\rightarrow\mathcal{D}$ defined pointwise on $\mathcal{D}$ in the following manner: for any $Z_{0}\in\mathcal{D}$, the map $t\mapsto Z(t):=T_{t}Z_{0}$ is the unique physical global-in-time weak solution of Newton's equations subject to the initial datum $Z_{0}$.
\end{defn}
One does not expect the dynamics $t\mapsto T_{t}Z_{0}$ to `experience a collision' for every possible choice of $Z_{0}\in\mathcal{D}$. As such, for each $X_{0}\in\mathcal{P}$, one must partition the set of all velocity vectors $\mathcal{V}(X_{0})$ into two classes -- one class of velocities $V_{0}$ for which $t\mapsto T_{t}Z_{0}$ leads to collision, the other class leading only to free space dynamics, i.e.
\begin{equation*}
T_{t}Z_{0}=X_{0}+tV_{0}\quad \text{for all} \hspace{2mm} t\in(-\infty, \infty).
\end{equation*}
Moreover, the characterisation of these classes is important as the {\em collision time map} $\tau$ (as well as the {\em scattering map} $\sigma$) is not defined on the whole phase space bundle $\mathcal{D}$, but only a sub-bundle thereof. This leads to the concept of {\em velocity collision cones}, which we introduce below.
\subsection{Velocity Collision Cones}
For $X\in\mathcal{P}$ and $V\in\mathbb{R}^{6}$, we write $L(X, V)\subset\mathbb{R}^{6}$ to denote the line
\begin{equation*}
L(X, V):=\left\{
X+sV\,:\,s\in\mathbb{R}
\right\}.
\end{equation*}
Moreover, we write $L^{-}(X, V)\subset L(X, V)$ and $L^{+}(X, V)\subset L(X, V)$ to denote the infinite half-lines
\begin{equation*}
\begin{array}{c}
L^{-}(X, V):=\left\{
X+sV\,:\,s\leq 0
\right\}, \vspace{2mm}\\
L^{+}(X, V):=\left\{
X+sV\,:\,s\geq 0
\right\}.
\end{array}
\end{equation*}
For a given $X\in\mathcal{P}$, we define the associated {\bf velocity collision cone} $\mathcal{C}(X)\subset\mathbb{R}^{6}$ as
\begin{equation*}
\mathcal{C}(X):=\left\{
V\in\mathbb{R}^{6}\,:\,L(X, V)\cap\partial\mathcal{P}\neq\varnothing
\right\}.
\end{equation*}
Moreover, the set of all pre-collisional velocities
\begin{equation*}
\mathcal{C}^{-}(X):=\left\{
V\in\mathcal{C}(X)\,:\, L^{+}(X, V)\cap\partial\mathcal{P}\neq\varnothing
\right\},
\end{equation*}
while the set of all post-collisional velocities is given by
\begin{equation*}
\mathcal{C}^{+}(X):=\left\{
V\in\mathcal{C}(X)\,:\, L^{-}(X, V)\cap\partial\mathcal{P}\neq\varnothing
\right\}.
\end{equation*}
%The following proposition records the precise form of the double cone in $\mathbb{R}^{6}$ determined by $\mathcal{C}(X)$, which we state without proof.
%\begin{prop} For any $X\in\mathcal{P}$, one has
%\begin{equation*}
%\mathcal{C}(X)=\left\{
%V\in\mathbb{R}^{6}\,:\,\widetilde{X}\cdot M(X)\leq \frac{\widetilde{X}\cdot V}{|\ov{v}-v|} \leq\frac{1}{\sqrt{2}}
%\right\},
%\end{equation*}
%where $M(X)\in\mathbb{R}^{6}$ is given by
%\begin{equation*}
%M(X):=\frac{1}{\sqrt{1+|\ov{x}-x|^{2}-2\varepsilon}}\left[
%\begin{array}{c}
%\ov{x}-x \\
%\widehat{m}(\ov{x}-x)
%\end{array}
%\right]
%\end{equation*}
%with
%\begin{equation*}
%\widehat{m}(y):=
%\end{equation*}
%and $\widehat{X}\in\mathbb{S}^{5}$ is defined in terms of $X\in\mathcal{P}$ by
%\begin{equation*}
%\widetilde{X}:=\frac{1}{\sqrt{2}|\ov{x}-x|}\left[
%\begin{array}{c}
%-\ov{x}+x\\
%\ov{x}-x
%\end{array}
%\right].
%\end{equation*}
%\end{prop}
It is well known that the maximum number of collisions two hard spheres evolving on the table $\mathcal{P}$ can endure is 2. Indeed, this follows from uniqueness of physical weak solutions of Newton's equations of motion on $\mathcal{D}$. In order to be able to write down the family of hard sphere flow operators $\{T_{t}\}_{t\in\mathbb{R}}$ on $\mathcal{P}$ explicitly, one must first derive the map
\begin{equation*}
\tau:\bigsqcup_{X\in\mathcal{P}}\mathcal{C}(X)\rightarrow\mathbb{R}
\end{equation*}
which yields, for a given initial position $X\in\mathcal{P}$ and velocity $V\in\mathcal{C}(X)$, the unique collision time $\tau(X, V)\in\mathbb{R}$. One must also define the map
\begin{equation*}
\sigma:\bigsqcup_{X\in\mathcal{P}}\mathcal{C}(X)\rightarrow\mathrm{O}(6)
\end{equation*}
which produces the Boltzmann scattering matrix $\sigma(X, V)\in\mathrm{O}(6)$ that maps pre- to post-collisional velocities at collision. Let us set out to derive these maps in the following sections.
\subsection{The Collision Time Map $\tau$}\label{tauderiv}
It is helpful to decompose each fiber $\{X\}\times\mathcal{V}(X)$ over $X\in\mathcal{P}$ of the phase space bundle $\mathcal{D}$ as
\begin{equation*}
\{X\}\times\mathcal{V}(X)=\underbrace{\{X\}\times(\mathbb{R}^{6}\setminus\mathcal{C}(X))}_{\mathcal{F}_{0}(X):=}\cup\underbrace{\{X\}\times\mathcal{C}^{-}(X)}_{\mathcal{F}_{-}(X):=}\cup\underbrace{\{X\}\times\mathcal{C}^{+}(X)}_{\mathcal{F}_{+}(X):=}
\end{equation*}
whenever $X\in\mathcal{P}^{\circ}$. We require no further decomposition for those fibers whose base point $X$ lies on the boundary of the table $\partial\mathcal{P}$. We begin with the following definition. 
\begin{defn}[Collision Time Map]
We write $\tau:\sqcup_{X\in\mathcal{P}}\mathcal{C}(X)\rightarrow\mathbb{R}$ to denote the {\bf collision time map} given by
\begin{equation*}
\tau(X, V):=\mathrm{argmin}\,\left\{
|s|\,:\,X+sV\in\partial\mathcal{P}
\right\}.
\end{equation*}
for $X\in\mathcal{P}$ and $V\in\mathcal{C}(X)$.
\end{defn}
It is possible to write down $\tau$ as an explicit function on the bundle $\sqcup_{X\in\mathcal{P}}\mathcal{C}(X)$. Indeed, we have the following:
\begin{prop}[Characterisation of the Collision Time Map]\label{colltimeprop}
For any $X\in\mathcal{P}$ and $V\in\mathcal{C}(X)$, one has that
\begin{equation*}
\tau(X, V)=\frac{D(X, V)}{|\ov{v}-v|},
\end{equation*}
where
\begin{equation*}
D(X, V):=\left\{
\begin{array}{ll}
-(\ov{x}-x)\cdot(\widehat{\ov{v}-v})-\sqrt{|(\ov{x}-x)\cdot(\widehat{\ov{v}-v})|^{2}+\varepsilon^{2}-|\ov{x}-x|^{2}} & \text{if}\hspace{2mm}V\in\mathcal{C}^{-}(X), \vspace{2mm}\\
-(\ov{x}-x)\cdot(\widehat{\ov{v}-v})+\sqrt{|(\ov{x}-x)\cdot(\widehat{\ov{v}-v})|^{2}+\varepsilon^{2}-|\ov{x}-x|^{2}} & \text{if}\hspace{2mm}V\in\mathcal{C}^{+}(X),
\end{array}
\right.
\end{equation*}
and $\widehat{y}:=y/|y|$ for any $y\in\mathbb{R}^{3}\setminus\{0\}$.
\end{prop}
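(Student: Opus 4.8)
The plan is to reduce the geometric condition $X+sV\in\partial\mathcal{P}$ to a scalar quadratic equation in $s$, and then to identify which of its two roots is the one selected by the $\mathrm{argmin}$ in the definition of $\tau$. Writing $a:=\ov{x}-x$ and $w:=\ov{v}-v$, the condition $X+sV\in\partial\mathcal{P}$ is equivalent to $|a+sw|^{2}=\varepsilon^{2}$, i.e. to $P(s):=|w|^{2}s^{2}+2(a\cdot w)s+(|a|^{2}-\varepsilon^{2})=0$. On the bundle $\sqcup_{X\in\mathcal{P}}\mathcal{C}(X)$ one has $w\neq 0$ (for $X\in\mathcal{P}^{\circ}$ no collision can occur when $w=0$, and for $X\in\partial\mathcal{P}$ the claim is trivial since $\tau\equiv 0$ there), so $P$ is a genuine quadratic with positive leading coefficient, and its discriminant is nonnegative precisely because $V\in\mathcal{C}(X)$ means $L(X,V)\cap\partial\mathcal{P}\neq\varnothing$. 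First I would apply the quadratic formula to obtain the two roots $s_{\pm}=|w|^{-2}\big(-(a\cdot w)\pm\sqrt{(a\cdot w)^{2}-|w|^{2}(|a|^{2}-\varepsilon^{2})}\big)$, then divide numerator and denominator by $|w|$ and recognise $a\cdot\widehat{\ov{v}-v}$ inside the radical; this rewrites $s_{\pm}$ as exactly $D(X,V)/|\ov{v}-v|$ for the two sign choices displayed in the statement.

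The main step is to show that $V\in\mathcal{C}^{-}(X)$ selects the minus branch and $V\in\mathcal{C}^{+}(X)$ the plus branch. Here one uses crucially that $X\in\mathcal{P}$ forces $|a|\geq\varepsilon$, so the constant term of $P$ is $\geq 0$; since the leading coefficient is positive, the product $s_{-}s_{+}$ is $\geq 0$ and the two roots share a sign. Consequently the sign of $a\cdot w$ — which up to the positive factor $|w|^{-2}$ equals $-(s_{-}+s_{+})/2$ — decides whether both roots are $\geq 0$ (collision in forward time) or $\leq 0$ (collision in backward time). I would then argue that $V\in\mathcal{C}^{-}(X)$ forces $a\cdot w\leq 0$: the separation $s\mapsto|a+sw|^{2}$ has derivative $2(a\cdot w)$ at $s=0$, so if this were strictly positive the separation would strictly increase at $s=0$ and, together with the same-sign property of the roots, no nonnegative root of $P$ could exist, contradicting $L^{+}(X,V)\cap\partial\mathcal{P}\neq\varnothing$ for $X\in\mathcal{P}^{\circ}$. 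In that regime both roots are $\geq 0$, so $\tau(X,V)=\min\{s_{-},s_{+}\}=s_{-}$, which is the minus branch; the case $V\in\mathcal{C}^{+}(X)$ is symmetric and yields $\tau(X,V)=s_{+}$ (the nonpositive root of smaller modulus), the plus branch. Substituting the rewritten expressions from the first step then gives the claimed identity.

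Finally, I would check that the $\mathrm{argmin}$ is a genuine single point, so that $\tau$ and the branch selection are unambiguous: $|s_{-}|=|s_{+}|$ forces $s_{-}=-s_{+}$, hence $a\cdot w=0$, which with $|a|\geq\varepsilon$ and nonnegativity of the discriminant of $P$ forces $|a|=\varepsilon$ and $s_{-}=s_{+}=0$; in particular when $X\in\partial\mathcal{P}$ the relevant branch of $D$ (the minus branch if $(\ov{x}-x)\cdot(\ov{v}-v)\leq 0$, the plus branch otherwise) collapses to $0$, consistently with $\tau(X,V)=0$ there. The algebra is routine; I expect the only delicate point to be the clean book-keeping of the equivalence between membership in $\mathcal{C}^{\pm}(X)$ and the sign of $(\ov{x}-x)\cdot(\ov{v}-v)$, together with the verification that the root named in each case is indeed the one of least modulus — in particular in the degenerate configurations with $X\in\partial\mathcal{P}$ or tangential incidence $|(\ov{x}-x)\cdot\widehat{\ov{v}-v}|^{2}+\varepsilon^{2}=|\ov{x}-x|^{2}$.
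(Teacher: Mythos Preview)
Your proposal is correct and follows essentially the same route as the paper: both reduce the condition $X+sV\in\partial\mathcal{P}$ to the relative-coordinate quadratic $|(\ov{x}-x)+s(\ov{v}-v)|^{2}=\varepsilon^{2}$, solve it, and then identify which root is the one of least modulus according to membership in $\mathcal{C}^{\pm}(X)$. The paper packages the reduction as a separate Galilean-change lemma and phrases the computation in terms of the closest intersection point of a ``minor line'' with the sphere $\partial\mathbb{R}^{3}_{\varepsilon}$, whereas you go directly to the quadratic and are more explicit about the branch selection via the sign of the product and sum of the roots; the substance is the same.
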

\begin{proof}
For any $X\in\mathcal{P}$ and $V\in\mathcal{C}(X)$, we herein calculate the length of the portion of the line $L(X, V)$ in $\mathbb{R}^{6}$ which begins at $X$ and ends at the element of $L(X, V)\cap\partial\mathcal{P}$ which is closest in the Euclidean distance to $X$ itself. In this pursuit, it proves useful to define the following auxiliary low-dimensional objects. For any $\varepsilon>0$, we write $\mathbb{R}^{3}_{\varepsilon}\subset\mathbb{R}^{3}$ to denote the set
\begin{equation*}
\mathbb{R}^{3}_{\varepsilon}:=\left\{
y\in\mathbb{R}^{3}\,:\,|y|\geq \varepsilon
\right\}.
\end{equation*}
For any $y\in\mathbb{R}^{3}$ and $w\in\mathbb{R}^{3}$, we define the associated {\em minor line} $\ell(y, w)\subset\mathbb{R}^{3}$ by
\begin{equation*}
\ell(y, w):=\left\{
y+sw\,:\,s\in\mathbb{R}
\right\},
\end{equation*}
together with the minor line segments
\begin{equation*}
\begin{array}{c}
\ell^{-}(y, w):=\left\{
y+sw\,:\,s\leq 0
\right\} \vspace{2mm}\\
\ell^{+}(y, w):=\left\{
y+sw\,:\,s\geq 0
\right\}.
\end{array}
\end{equation*}
In turn, the {\em minor velocity cone} $c(y)\subset\mathbb{R}^{3}$ by
\begin{equation*}
c(y):=\left\{
w\in\mathbb{R}^{3}\,:\,\ell(y, w)\cap\partial\mathbb{R}^{3}_{\varepsilon}\neq \varnothing
\right\},
\end{equation*}
whilst the corresponding pre- and post-collisional minor velocity cones are given by
\begin{equation*}
\begin{array}{c}
c^{-}(y):=\left\{
w\in c(y)\,:\,\ell^{+}(y, w)\cap\mathbb{R}^{3}_{\varepsilon}\neq \varnothing
\right\}, \vspace{2mm}\\
c^{+}(y):=\left\{
w\in c(y)\,:\,\ell^{-}(y, w)\cap\mathbb{R}^{3}_{\varepsilon}\neq \varnothing
\right\},
\end{array}
\end{equation*}
respectively. We record, as a first observation, the following lemma.
\begin{lem}\label{gali}
For any $X=[x, \ov{x}]\in\mathcal{P}$ and $V=[v, \ov{v}]\in\mathcal{C}(X)$, one has that 
$P=[p, \ov{p}]\in L(X, V)\cap\partial\mathcal{P}$ if and only if $\ov{p}-p\in\ell(\ov{x}-x, \ov{v}-v)\cap\mathbb{R}^{3}_{\varepsilon}$.
\end{lem}
\begin{proof}
This follows from a simple Galilean change of coordinates in $\mathbb{R}^{6}$.
\end{proof}
If $y\in\mathbb{R}^{3}_{\varepsilon}$ and $w\in c(y)$ are given, we now consider finding the unique element $p_{\ast}(y, w)$ of $\ell(y, w)\cap\mathbb{R}^{2}_{\varepsilon}$ which is closest to $y\in\mathbb{R}^{2}_{\varepsilon}$ in the Euclidean distance. It is straightforward to show that
\begin{equation}\label{lilpeeone}
p_{\ast}(y, w)=y-\frac{y\cdot w}{|w|^{2}}w-\frac{\sqrt{|y\cdot w|^{2}+(\varepsilon^{2}-|y|^{2})|w|^{2}}}{|w|^{2}}w
\end{equation}
if $w\in c^{-}(y)$, and
\begin{equation}\label{lilpeetwo}
p_{\ast}(y, w)=y-\frac{y\cdot w}{|w|^{2}}w+\frac{\sqrt{|y\cdot w|^{2}+(\varepsilon^{2}-|y|^{2})|w|^{2}}}{|w|^{2}}w
\end{equation}
if $w\in c^{+}(y)$.
We write $P_{\ast}(X, V)$ to denote the point of the hard sphere table $\mathcal{P}$ determined from the point $p_{\ast}(\ov{x}-x, \ov{v}-v)$ given in lemma \ref{gali} above. From the above expressions \eqref{lilpeeone} and \eqref{lilpeetwo}, we quickly deduce that the {\em signed distance} $D(X, V)\in\mathbb{R}$ from $X$ to $P_{\ast}(X, V)$ is simply given by
\begin{equation*}
D(X, V)=\left\{
\begin{array}{ll}
-(\ov{x}-x)\cdot(\widehat{\ov{v}-v})-\sqrt{|(\ov{x}-x)\cdot(\widehat{\ov{v}-v})|^{2}+\varepsilon^{2}-|\ov{x}-x|^{2}} & \text{if}\hspace{2mm}V\in\mathcal{C}^{-}(X), \vspace{2mm}\\
-(\ov{x}-x)\cdot(\widehat{\ov{v}-v})+\sqrt{|(\ov{x}-x)\cdot(\widehat{\ov{v}-v})|^{2}+\varepsilon^{2}-|\ov{x}-x|^{2}} & \text{if}\hspace{2mm}V\in\mathcal{C}^{+}(X),
\end{array}
\right.
\end{equation*}
whence the collision time $\tau(X, V)\in\mathbb{R}$ is given by
\begin{equation*}
\tau(X, V):=\frac{D(X, V)}{|\ov{v}-v|}
\end{equation*}
for all $X\in\mathcal{P}$ and $V\in\mathbb{R}^{6}$, as claimed in the statement of the proposition. 
\end{proof}
We conclude this small section on the collision time map with the following important identity, which shall be of repeated use in the sequel.
\begin{prop}[Transport Identity I]
For any $X\in\mathcal{P}^{\circ}$ and $V\in\mathcal{C}(X)$, one has that
\begin{equation}\label{transportidentity}
(V\cdot \nabla_{X})\tau(X, V)=-1.\tag{ID--I}
\end{equation}
\end{prop}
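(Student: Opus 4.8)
The plan is to differentiate the explicit representation of $\tau$ obtained in Proposition~\ref{colltimeprop}, exploiting the fact that $\tau(X,V)$ depends on $X=[x,\ov{x}]$ only through the relative displacement $r := \ov{x}-x$. First I would record the chain-rule identity that, for any differentiable function $f$ of the single variable $\ov{x}-x$,
\[
(V\cdot\nabla_{X})f = v\cdot\nabla_{x}f + \ov{v}\cdot\nabla_{\ov{x}}f = (\ov{v}-v)\cdot\nabla_{r}f ,
\]
so that, writing $u := \ov{v}-v$ --- which is nonzero whenever $X\in\mathcal{P}^{\circ}$ and $V\in\mathcal{C}(X)$, since otherwise the line $L(X,V)$ could never meet $\partial\mathcal{P}$ --- the left-hand side of \eqref{transportidentity} equals $|u|^{-1}\,u\cdot\nabla_{r}D(X,V)$.

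Next I would differentiate $D$. By Proposition~\ref{colltimeprop}, $D(X,V) = -\,r\cdot\widehat{u} - \sqrt{R(r)}$ when $V\in\mathcal{C}^{-}(X)$ and $D(X,V) = -\,r\cdot\widehat{u} + \sqrt{R(r)}$ when $V\in\mathcal{C}^{+}(X)$, where $R(r) := |r\cdot\widehat{u}|^{2} + \varepsilon^{2} - |r|^{2}$. Since $\nabla_{r}(r\cdot\widehat{u}) = \widehat{u}$ and $u\cdot\widehat{u} = |u|$, the first term contributes exactly $-|u|$ to $u\cdot\nabla_{r}D$. For the radical, $\nabla_{r}R(r) = 2(r\cdot\widehat{u})\widehat{u} - 2r$, so contracting with $u$ gives $2(r\cdot\widehat{u})|u| - 2(r\cdot u)$, which vanishes because $r\cdot u = |u|\,(r\cdot\widehat{u})$; hence the $\sqrt{R(r)}$ term makes no contribution, in either sign branch. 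Therefore $u\cdot\nabla_{r}D = -|u|$, and dividing by $|u|$ yields $(V\cdot\nabla_{X})\tau(X,V) = -1$. This has a transparent geometric meaning: the collision point $P_{\ast}(X,V) = X + \tau(X,V)V$ depends only on the line $L(X,V)$, which is invariant under the translation $X\mapsto X+sV$, so $\tau(X+sV,V) = \tau(X,V) - s$ and differentiating in $s$ at $s=0$ recovers \eqref{transportidentity}.

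The one place requiring care --- and the main obstacle --- is the differentiability of $\tau$ at $(X,V)$, since the representation above is real-analytic only where $R(r)>0$ and where the classification of $V$ as pre- or post-collisional is locally constant. I would check that both hold throughout the interior of the relevant cone: for $X\in\mathcal{P}^{\circ}$ one has $|r|>\varepsilon$, so the quadratic $s\mapsto|u|^{2}s^{2}+2(r\cdot u)s+(|r|^{2}-\varepsilon^{2})$, whose real roots are the intersection parameters of $L(X,V)$ with $\partial\mathcal{P}$, has positive constant term; its (real) roots thus share a common sign, so $\mathcal{C}^{-}(X)$ and $\mathcal{C}^{+}(X)$ are disjoint and the $\mathrm{argmin}$ defining $\tau$ picks out a locally constant branch, while $R(r)>0$ is precisely the strict form of the membership condition $V\in\mathcal{C}(X)$. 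Hence $\tau$ is real-analytic near any such $(X,V)$ and the computation is legitimate; the remaining grazing configurations, where $R(r)=0$ and $\tau$ is not differentiable, are excluded, and on them one may still invoke the one-sided covariance identity $\tau(X+sV,V)=\tau(X,V)-s$.
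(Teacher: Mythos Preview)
Your proof is correct and carries out in detail the ``simple calculation'' that the paper leaves to the reader; the explicit differentiation of the formula in Proposition~\ref{colltimeprop} together with your covariance observation $\tau(X+sV,V)=\tau(X,V)-s$ is exactly the argument intended.
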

\begin{proof}
This is a simple calculation which is left to the reader. 
\end{proof}
With this work in place, we are now able to proceed to the definition and properties of the Boltzmann scattering map $\sigma:\sqcup_{X\in\mathcal{P}}\mathcal{C}(X)\rightarrow\mathrm{O}(6)$.
\subsection{The Boltzmann Scattering Map $\sigma$}
In the case of $N=2$ hard spheres, the boundary of the hard sphere table $\partial\mathcal{P}$ is determined as the level set of a real analytic function on $\mathbb{R}^{6}$, namely
\begin{equation*}
\partial\mathcal{P}=\left\{
X\in\mathbb{R}^{6}\,:\,F^{\varepsilon}(X)=0
\right\},
\end{equation*}
where $F^{\varepsilon}:\mathbb{R}^{6}\rightarrow\mathbb{R}$ is defined pointwise as $F^{\varepsilon}(X):=|x-\ov{x}|^{2}-\varepsilon^{2}$ for $X=[x, \ov{x}]\in\mathcal{P}$. We now employ this characterisation of the boundary of the hard sphere table to write down an important unit-norm field on $\sqcup_{X\in\mathcal{P}}\mathcal{C}(X)$. 
\begin{defn}[Outward Normal Map $\widehat{\nu}$]
For $X\in\mathcal{P}$ and $V\in\mathcal{C}(X)$, we write $\widehat{\nu}(X, V)\in\mathbb{S}^{5}$ to denote the unit-norm vector which is normal to the boundary point $X+\tau(X, V)V\in\partial\mathcal{P}$ of the hard sphere table, i.e.
\begin{equation*}
\widehat{\nu}(X, V):=\frac{\nabla F^{\varepsilon}(X+\tau(X, V)V)}{|\nabla F^{\varepsilon}(X+\tau(X, V)V)|}.
\end{equation*}
\end{defn}
By employing techniques analogous to those used in section \eqref{tauderiv} above, one may verify quickly that the map $\widehat{\nu}:\sqcup_{X\in\mathcal{P}}\mathcal{C}(X)\rightarrow\mathbb{S}^{5}$ is given pointwise by
\begin{equation*}
\widehat{\nu}(X, V)=\frac{1}{\sqrt{2}\varepsilon}\left[
\begin{array}{c}
\ov{x}-x+\tau(X, V)(\ov{v}-v) \\
x-\ov{x}+\tau(X, V)(v-\ov{v})
\end{array}
\right]
\end{equation*}
for $X\in\mathcal{P}$ and $V\in\mathcal{C}(X)$. In turn, we define the {\bf Boltzmann scattering map} $\sigma:\sqcup_{X\in\mathcal{P}}\mathcal{C}(X)\rightarrow\mathrm{O}(6)$ pointwise by
\begin{equation*}
\sigma(X, V):=I-2\widehat{\nu}(X, V)\otimes \widehat{\nu}(X, V).
\end{equation*}
Intuitively, for a given `initial point' $X\in\mathcal{P}$ with $V\in\mathcal{C}(X)$, the map $\sigma: (X, V)\mapsto \sigma(X, V)$ returns the scattering matrix in $\mathrm{O}(6)$ which becomes `active' at the unique time when the hard sphere trajectory $t\mapsto X+tV$ `hits' the boundary of the table. Of course, this means that $\sigma$ is not defined as a map into $\mathrm{O}(6)$ for all $[X, V]\in \mathcal{D}$. As it will be of importance later, we perform a natural extension of this map in section \ref{extension} below. In any case, one may verify that the conservation of linear momentum
\begin{equation*}
\left(
\begin{array}{c}
(\sigma(X, V)V)_{1}+(\sigma(X, V)V)_{4}\\
(\sigma(X, V)V)_{2}+(\sigma(X, V)V)_{5} \\
(\sigma(X, V)V)_{3}+(\sigma(X, V)V)_{6}
\end{array}
\right)=\left(
\begin{array}{c}
V_{1}+V_{4} \\
V_{2}+ V_{5} \\
V_{3}+ V_{6}
\end{array}
\right), \tag{COLM}
\end{equation*}
the conservation of angular momentum (for all points of measurement $a\in\mathbb{R}^{3}$) given by
\begin{align*}
\left(
\begin{array}{c}
X_{1}-a_{1} \\
X_{2}-a_{2} \\
X_{3}-a_{3}
\end{array}
\right)\wedge\left(
\begin{array}{c}
(\sigma(X, V)V)_{1} \\
(\sigma(X, V)V)_{2} \\
(\sigma(X, V)V)_{3}
\end{array}
\right)+\left(
\begin{array}{c}
X_{4}-a_{1} \\
X_{5}-a_{2} \\
X_{6}-a_{3}
\end{array}
\right)\wedge\left(
\begin{array}{c}
(\sigma(X, V)V)_{4} \\
(\sigma(X, V)V)_{5} \\
(\sigma(X, V)V)_{6}
\end{array}
\right) \vspace{2mm}\\
=\left(
\begin{array}{c}
X_{1}-a_{1} \\
X_{2}-a_{2} \\
X_{3}-a_{3}
\end{array}
\right)\wedge\left(
\begin{array}{c}
V_{1} \\
V_{2} \\
V_{3}
\end{array}
\right)+\left(
\begin{array}{c}
X_{4}-a_{1} \\
X_{5}-a_{2} \\
X_{6}-a_{3}
\end{array}
\right)\wedge\left(
\begin{array}{c}
V_{4} \\
V_{5} \\
V_{6}
\end{array}
\right) \tag{COAM} 
\end{align*}
and the conservation of kinetic energy
\begin{equation*}
\frac{1}{2}|\sigma(X, V)V|^{2}=\frac{1}{2}|V|^{2} \tag{COKE}
\end{equation*}
hold true for all $X\in\mathcal{P}$ and $V\in\mathcal{C}(X)$. 
\subsection{Extension of the Boltzmann Scattering Map}\label{extension}
One obvious barrier to the extension of $\sigma$ from $\sqcup_{X\in\mathcal{P}}\mathcal{C}(X)$ to $\mathcal{D}$ is that the collision time map $\tau$ is not defined on those points $Z=[X, V]\in\mathcal{D}$ for which $t\mapsto T_{t}Z$ admits no collisions on $(-\infty, \infty)$. Owing to the fact that $\sigma$ acts simply as the identity matrix on the boundary of velocity cones $\mathcal{C}(X)$, we establish the following definition.
\begin{defn}[Extension of Boltzmann Scattering Map]
We write $\sigma_{\ast}:\mathcal{D}\rightarrow\mathrm{O}(6)$ to denote the map on $\mathcal{D}$ defined pointwise by
\begin{equation}
\sigma_{\ast}(X, V):=\left\{
\begin{array}{ll}
\sigma(X, V) & \quad \text{if}\hspace{2mm}[X, V]\in\sqcup_{X\in\mathcal{P}}\mathcal{C}(X), \vspace{2mm}\\
I & \quad \text{otherwise},
\end{array}
\right.
\end{equation}
where $I\in\mathbb{R}^{6\times 6}$ denotes the identity matrix.
\end{defn}
Using this extension, we in turn define the map $\Sigma_{\ast}:\mathcal{D}\rightarrow\mathcal{D}$ pointwise by
\begin{equation*}
\Sigma_{\ast}(Z):=\left(
\begin{array}{cc}
\sigma_{\ast}(X, V) & 0 \\
0 & \sigma_{\ast}(X, V)
\end{array}
\right)Z,
\end{equation*}
for $Z=[X, V]\in\mathcal{D}$, where $0\in\mathbb{R}^{6\times 6}$ is the zero matrix. We also record the following useful lemma on a basic property of the collision time map $\tau$ on $\sqcup_{X\in\mathcal{P}}\mathcal{C}(X)$.
\begin{lem}[Symmetry of Collision Time Map]\label{timesymm}
For any $X\in\mathcal{P}$ and $V\in\mathcal{C}(X)$, one has that
\begin{equation*}
\tau(X', V')=\tau(X, V),
\end{equation*}
where $[X', V']\in\mathcal{D}$ denotes the element
\begin{equation*}
\left[
\begin{array}{c}
X' \\
V'
\end{array}
\right]:=\Sigma_{\ast}(X, V)\left[
\begin{array}{c}
X \\
V
\end{array}
\right].
\end{equation*}
\end{lem}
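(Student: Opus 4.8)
The plan is to exploit the fact that, for $V\in\mathcal{C}(X)$, the matrix $\sigma_{\ast}(X,V)=\sigma(X,V)$ is an \emph{orthogonal reflection} of $\mathbb{R}^{6}$ whose action leaves the defining function $F^{\varepsilon}$ of the hard sphere table invariant; then a linear bijection that preserves $F^{\varepsilon}$ must carry the line $L(X,V)$ together with its intersection with $\partial\mathcal{P}$ onto $L(X',V')$ together with \emph{its} intersection with $\partial\mathcal{P}$, and $\tau$ is determined by that intersection alone.

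First I would record the relevant structure of the reflection. Since $V\in\mathcal{C}(X)$ we have $\sigma_{\ast}(X,V)=\sigma(X,V)=I-2n\otimes n$ with $n:=\widehat{\nu}(X,V)\in\mathbb{S}^{5}$, and by the explicit formula for $\widehat{\nu}$ derived above the vector $n$ has the \emph{anti-diagonal} shape $n=\tfrac{1}{\sqrt{2}}[\widehat{m},-\widehat{m}]$ for the unit vector $\widehat{m}:=\tfrac{1}{\varepsilon}\big((\ov{x}-x)+\tau(X,V)(\ov{v}-v)\big)\in\mathbb{S}^{2}$ (its norm being $1$ because $X+\tau(X,V)V\in\partial\mathcal{P}$). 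Writing $R:=\sigma_{\ast}(X,V)$, a direct computation gives, for $Y=[y,\ov{y}]\in\mathbb{R}^{6}$,
\begin{equation*}
RY=\big[\,y-((y-\ov{y})\cdot\widehat{m})\widehat{m}\,,\ \ov{y}+((y-\ov{y})\cdot\widehat{m})\widehat{m}\,\big],
\end{equation*}
so the relative coordinate transforms as $y-\ov{y}\mapsto(y-\ov{y})-2((y-\ov{y})\cdot\widehat{m})\widehat{m}$, i.e. by the $3\times3$ Householder reflection across $\widehat{m}^{\perp}$. In particular the length of the relative coordinate is unchanged, whence $F^{\varepsilon}\circ R=F^{\varepsilon}$ on all of $\mathbb{R}^{6}$, and therefore $R(\mathcal{P})=\mathcal{P}$, $R(\partial\mathcal{P})=\partial\mathcal{P}$ and $R(\mathcal{P}^{\circ})=\mathcal{P}^{\circ}$.

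Next, from the definition of $\Sigma_{\ast}$ one reads off $X'=RX$ and $V'=RV$, and linearity of $R$ gives $X'+sV'=R(X+sV)$ for every $s\in\mathbb{R}$. Combining this with $F^{\varepsilon}\circ R=F^{\varepsilon}$ yields the equivalence $X+sV\in\partial\mathcal{P}\iff X'+sV'\in\partial\mathcal{P}$, and hence the equality of sets $\{\,s\in\mathbb{R}:X+sV\in\partial\mathcal{P}\,\}=\{\,s\in\mathbb{R}:X'+sV'\in\partial\mathcal{P}\,\}$. Since $V\in\mathcal{C}(X)$ this common set is non-empty, so $V'\in\mathcal{C}(X')$ and $\tau(X',V')$ is defined; and because the two sets coincide, their minimisers in $|\cdot|$ coincide, i.e. $\tau(X',V')=\tau(X,V)$. (Alternatively, one may substitute $[X',V']$ directly into the closed-form expression of Proposition~\ref{colltimeprop} and verify the identity by algebra, but the argument above avoids case distinctions between $\mathcal{C}^{-}(X)$ and $\mathcal{C}^{+}(X)$.)

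The only step that is not pure bookkeeping is the observation that $n=\widehat{\nu}(X,V)$ lies in the anti-diagonal subspace $\{[a,-a]:a\in\mathbb{R}^{3}\}$ of $\mathbb{R}^{6}$, which is exactly what makes $R$ restrict to a reflection on the relative coordinate and hence preserve $F^{\varepsilon}$; I expect this to be the crux. A minor caveat concerns the degenerate case $X\in\partial\mathcal{P}$: there $\tau(X,V)=0$ and $R$ still fixes $F^{\varepsilon}$, so $X'=RX\in\partial\mathcal{P}$ and consequently $\tau(X',V')=0=\tau(X,V)$ as well, so no separate treatment is needed.
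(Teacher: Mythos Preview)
Your proof is correct. The paper itself states this lemma without proof, so there is no argument in the text to compare against; your geometric approach---observing that $\widehat{\nu}(X,V)$ lies in the anti-diagonal subspace $\{[a,-a]:a\in\mathbb{R}^{3}\}$, whence the reflection $R=\sigma_{\ast}(X,V)$ acts as a $3\times 3$ Householder reflection on the relative coordinate and therefore preserves $F^{\varepsilon}$---is clean and avoids the case split between $\mathcal{C}^{-}(X)$ and $\mathcal{C}^{+}(X)$ that a direct substitution into Proposition~\ref{colltimeprop} would require. The key identity $\{s:X+sV\in\partial\mathcal{P}\}=\{s:X'+sV'\in\partial\mathcal{P}\}$ together with the definition $\tau(X,V)=\mathrm{argmin}\{|s|:X+sV\in\partial\mathcal{P}\}$ gives the conclusion immediately.
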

The map $\sigma_{\ast}$ will be used in section \ref{free} to build a coordinate transformation of phase space $\mathcal{D}$. As an immediate consequence of lemma \ref{timesymm}, one can quickly verify the following result.
\begin{prop}
The map $\Sigma_{\ast}:\mathcal{D}\rightarrow\mathcal{D}$ is a smooth involution. Moreover, 
\begin{equation*}
|\mathrm{det}\,D\Sigma_{\ast}(Z)|=1
\end{equation*}
for all $X\in\mathcal{P}^{\circ}$ and $V\in\mathbb{R}^{6}$.
\end{prop}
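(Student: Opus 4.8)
The plan is to work separately on three pieces of $\{(X,V)\in\mathcal{D}:X\in\mathcal{P}^{\circ}\}$: the open set on which $V\notin\mathcal{C}(X)$, the open set on which $V$ lies in the interior of $\mathcal{C}(X)$ — equivalently, on which the discriminant $|(\ov{x}-x)\cdot(\widehat{\ov{v}-v})|^{2}+\varepsilon^{2}-|\ov{x}-x|^{2}$ of Proposition \ref{colltimeprop} is strictly positive — and the measure-zero grazing locus where this discriminant vanishes. On the first set $\sigma_{\ast}(X,V)=I$, so $\Sigma_{\ast}$ is the identity there and everything is trivial; on the grazing locus $\tau$ and $\widehat{\nu}$ fail to be differentiable (indeed $\Sigma_{\ast}$ is discontinuous there), so the final identity of the statement is to be read on the set where $D\Sigma_{\ast}$ exists. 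All the work is therefore on the open set where the discriminant is positive, on which $\tau$, $\widehat{\nu}$ and hence $\Sigma_{\ast}$ are real-analytic.

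For the involution property the point is that $\sigma=\sigma(X,V)$ is itself a symmetry of the hard sphere table. Writing $P_{\ast}=P_{\ast}(X,V)=[p,\ov{p}]\in\partial\mathcal{P}$ for the collision point, a short computation shows that $\sigma$ fixes the centre-of-mass coordinate $[\frac{1}{2}(x+\ov{x}),\frac{1}{2}(x+\ov{x})]$ and acts on the relative coordinate $x-\ov{x}$ by the $\mathrm{O}(3)$-reflection $I-2\widehat{m}\otimes\widehat{m}$ with $\widehat{m}:=(p-\ov{p})/\varepsilon\in\mathbb{S}^{2}$ (this is just the classical elastic scattering law); in particular $F^{\varepsilon}\circ\sigma=F^{\varepsilon}$, so $\sigma$ maps $\mathcal{P}$ to $\mathcal{P}$ and $\partial\mathcal{P}$ to $\partial\mathcal{P}$. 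Since $X'=\sigma X$ and $L(X',V')=\sigma L(X,V)$, the point of $L(X',V')\cap\partial\mathcal{P}$ nearest $X'$ is $\sigma$ applied to the point of $L(X,V)\cap\partial\mathcal{P}$ nearest $X$; together with Lemma \ref{timesymm} this gives $P_{\ast}(X',V')=\sigma P_{\ast}(X,V)$ (explicitly $\sigma P_{\ast}=[\ov{p},p]$). Differentiating $F^{\varepsilon}\circ\sigma=F^{\varepsilon}$ yields $\nabla F^{\varepsilon}(\sigma Y)=\sigma\nabla F^{\varepsilon}(Y)$, whence $\widehat{\nu}(X',V')=\sigma\widehat{\nu}(X,V)=-\widehat{\nu}(X,V)$ and therefore $\sigma(X',V')=I-2\widehat{\nu}(X',V')\otimes\widehat{\nu}(X',V')=\sigma(X,V)$. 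Since $\sigma_{\ast}(X,V)^{2}=I$, it follows that $\Sigma_{\ast}\circ\Sigma_{\ast}=\mathrm{id}$.

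For the Jacobian identity I would change coordinates by $G:(X,V)\mapsto(P_{\ast}(X,V),V,\tau(X,V))$, a real-analytic diffeomorphism from the collisional set onto an open subset of $\partial\mathcal{P}\times\mathbb{R}^{6}\times\mathbb{R}$ with inverse $(P_{\ast},V,\tau)\mapsto(P_{\ast}-\tau V,V)$. Using a chart in which the tangent directions to $\partial\mathcal{P}$ are orthonormal, a short determinant computation gives $|\det DG^{-1}(P_{\ast},V,\tau)|=c\,|(v-\ov{v})\cdot(p-\ov{p})|$ for a constant $c>0$, which depends only on $(P_{\ast},V)$, through the impact quantity $(v-\ov{v})\cdot(p-\ov{p})$. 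By the identities of the previous paragraph, in $G$-coordinates $\Sigma_{\ast}$ is the model map $\widetilde{\Sigma}_{\ast}(P_{\ast},V,\tau)=(\mathrm{sw}(P_{\ast}),\sigma(P_{\ast})V,\tau)$, where $\mathrm{sw}$ swaps the two position blocks and $\sigma(P_{\ast})=I-2\widehat{\nu}(P_{\ast})\otimes\widehat{\nu}(P_{\ast})$ depends on $P_{\ast}$ alone; since $\mathrm{sw}$ is a Euclidean isometry preserving $\partial\mathcal{P}$, $\sigma(P_{\ast})\in\mathrm{O}(6)$, and the $\tau$-slot is left fixed, the block-triangular form of $D\widetilde{\Sigma}_{\ast}$ gives $|\det D\widetilde{\Sigma}_{\ast}|\equiv 1$. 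Applying the chain rule to $\Sigma_{\ast}=G^{-1}\circ\widetilde{\Sigma}_{\ast}\circ G$ and using $|\det DG(Z)|=|\det DG^{-1}(GZ)|^{-1}$ reduces $|\det D\Sigma_{\ast}(Z)|$ to the quotient of $|\det DG^{-1}|$ evaluated at $\widetilde{\Sigma}_{\ast}(GZ)$ by its value at $GZ$. This quotient equals $1$ because the impact quantity is $\widetilde{\Sigma}_{\ast}$-invariant: with $V'=\sigma(P_{\ast})V$ and $\mathrm{sw}(P_{\ast})=[\ov{p},p]$, symmetry of the reflection $\sigma(P_{\ast})$ gives $(v'-\ov{v}')\cdot(\ov{p}-p)=(v-\ov{v})\cdot(p-\ov{p})$ in two lines.

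I expect the main obstacle to be bookkeeping rather than substance: one must take care that $G$ takes values in a manifold, so $|\det DG^{-1}|$ is computed against the Hausdorff volume on $\partial\mathcal{P}$ (the resulting constant being irrelevant), and that the chart $G$, together with $D\Sigma_{\ast}$ itself, degenerates on the null grazing locus. Once these are handled, both claims follow from the short computations indicated above, consistent with the assertion preceding the proposition that this is a quick consequence of Lemma \ref{timesymm}.
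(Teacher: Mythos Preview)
Your argument is correct. The paper offers no proof beyond ``a simple (but long and laborious) calculation, which we leave to the reader,'' so there is nothing substantive to compare your approach against; what you have supplied is a genuine proof where the paper supplies none. Your route via the collision-coordinate chart $G:(X,V)\mapsto(P_{\ast},V,\tau)$ is more conceptual than the brute-force differentiation the paper gestures at: it explains \emph{why} the Jacobian is unit-modulus (the conjugated map is block-triangular with orthogonal diagonal blocks, and the coordinate-change Jacobians cancel because the impact parameter $|(v-\ov{v})\cdot(p-\ov{p})|$ is preserved), rather than merely verifying it. The involution argument via $\sigma P_{\ast}=r(P_{\ast})$ and $\widehat{\nu}(X',V')=-\widehat{\nu}(X,V)$ is likewise clean and correct.

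One point worth flagging: your observation that $\Sigma_{\ast}$ is in fact discontinuous across the grazing locus is accurate --- on that set $\sigma V=V$ but $\sigma X\neq X$ (one computes $X\cdot\widehat{\nu}=-\varepsilon/\sqrt{2}$ there), so the extension by the identity does not match. The paper's blanket assertion that $\Sigma_{\ast}$ is ``smooth'' on all of $\mathcal{D}$ is therefore imprecise, and your reading of the Jacobian identity as holding off a null set is the right repair. This does not affect any downstream use in the paper, since only the measure-preserving property is invoked.
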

\begin{proof}
This follows from a simple (but long and laborious) calculation, which we leave to the reader.
\end{proof}
Let us finally record the following important identity, whose demonstration we leave once again to the reader. It shall be used repeatedly in the sequel in the derivation of the weak formulation of the Liouville equation \eqref{weakdude}.
\begin{prop}[Transport Identity II]
For any $X\in\mathcal{P}^{\circ}$ and $V\in\mathbb{R}^{6}$, it holds that
\begin{equation*}
(V\cdot\nabla_{X})\sigma_{\ast}(X, V)=0\quad \text{in}\hspace{2mm}\mathbb{R}^{6\times 6}. \tag{ID--II}
\end{equation*}
\end{prop}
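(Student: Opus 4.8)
\medskip
\noindent\emph{Proof proposal.} The plan is to reduce the statement to a one‑dimensional fact: since the operator $V\cdot\nabla_{X}$ differentiates only in the direction $V$, it suffices to show that, for each fixed $X\in\mathcal{P}^{\circ}$ and $V\in\mathbb{R}^{6}$, the matrix‑valued function $s\mapsto\sigma_{\ast}(X+sV,V)$ is constant on a neighbourhood of $s=0$; then $(V\cdot\nabla_{X})\sigma_{\ast}(X,V)=\frac{d}{ds}\big|_{s=0}\sigma_{\ast}(X+sV,V)=0$. The geometric input making this true is the elementary identity $L(X+sV,V)=L(X,V)$ for every $s\in\mathbb{R}$: translating the base point along $V$ does not change the line, hence does not change whether — or where — that line meets $\partial\mathcal{P}$.

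First I would split into the two cases dictated by the definition of $\sigma_{\ast}$. If $V\notin\mathcal{C}(X)$, then $L(X+sV,V)=L(X,V)$ misses $\partial\mathcal{P}$ for every $s$, so $V\notin\mathcal{C}(X+sV)$ and $\sigma_{\ast}(X+sV,V)=I$ identically; the directional derivative is then trivially $0$. If instead $V\in\mathcal{C}(X)$, then $V\in\mathcal{C}(X+sV)$ for all $s$, and for $|s|$ small we also have $X+sV\in\mathcal{P}^{\circ}$, so Transport Identity \eqref{transportidentity} applies along the segment and gives $\frac{d}{ds}\tau(X+sV,V)=(V\cdot\nabla_{X})\tau(X+sV,V)=-1$, whence $\tau(X+sV,V)=\tau(X,V)-s$ for $|s|$ small. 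Consequently the collision point is independent of $s$:
\[
(X+sV)+\tau(X+sV,V)\,V=X+sV+\bigl(\tau(X,V)-s\bigr)V=X+\tau(X,V)\,V.
\]
Since $\widehat{\nu}(X,V)=\nabla F^{\varepsilon}\!\left(X+\tau(X,V)V\right)/|\nabla F^{\varepsilon}\!\left(X+\tau(X,V)V\right)|$ depends on $(X,V)$ only through this collision point, the field $\widehat{\nu}$ — and therefore $\sigma_{\ast}(X,V)=\sigma(X,V)=I-2\widehat{\nu}(X,V)\otimes\widehat{\nu}(X,V)$ — is constant along $s\mapsto X+sV$ near $s=0$. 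Differentiating at $s=0$ gives the claim.

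The one delicate point is the regularity of $\sigma_{\ast}$ near base points $X$ for which $V$ is a \emph{grazing} direction, i.e. $V\in\partial\mathcal{C}(X)$: there $\sigma_{\ast}$ need not be differentiable as a function of the full variable $X$, since the square root in the formula for $\tau$ degenerates and $\widehat{\nu}$ is only continuous across the cone boundary. This is exactly why I would keep the argument one‑dimensional. Restricted to the line $s\mapsto X+sV$ the set $\{u\in\mathbb{R}:X+uV\in\partial\mathcal{P}\}$ is simply translated by $-s$, so the (here unique, tangential) collision parameter still moves affinely with slope $-1$ and the collision point $X+\tau(X,V)V$ is still fixed; hence $s\mapsto\sigma_{\ast}(X+sV,V)$ is genuinely constant and its $s$‑derivative exists and vanishes even when the full gradient of $\sigma_{\ast}$ does not. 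I would therefore write the proof so that it never uses more than differentiability of this one‑variable restriction, which the computation above makes manifest; everything else is immediate from $L(X+sV,V)=L(X,V)$ and Transport Identity~I.
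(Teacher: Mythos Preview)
Your argument is correct. The paper does not actually supply a proof of this proposition: it merely records the identity ``whose demonstration we leave once again to the reader,'' in keeping with the surrounding results which are dispatched as ``simple (but long and laborious) calculation[s].'' So there is no detailed paper proof to compare against; the implicit suggestion is a direct computation from the explicit formula for $\sigma_{\ast}$ in terms of $\tau$.

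Your route is more conceptual than that implicit brute-force computation. Rather than differentiating the explicit expression $I-2\widehat{\nu}\otimes\widehat{\nu}$ componentwise and watching terms cancel, you observe the geometric invariance $L(X+sV,V)=L(X,V)$ and feed in Transport Identity~I to conclude that the collision point $X+\tau(X,V)V$ --- and hence $\widehat{\nu}$ and $\sigma_{\ast}$ --- is constant along the ray $s\mapsto X+sV$. This buys you a clean, coordinate-free explanation of \emph{why} the identity holds (the scattering matrix depends on $(X,V)$ only through the line, which is $V$-translation invariant), and it also lets you handle the grazing set $V\in\partial\mathcal{C}(X)$ without worrying about the failure of full $X$-differentiability there, since you only ever use the one-variable restriction. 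The direct-calculation approach the paper alludes to would recover the same result but obscure this mechanism; your argument is both shorter and more illuminating.
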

We are now in a position to construct the hard sphere flow on $\mathcal{D}$ (as per definition \ref{hardflow} above) `by hand'.
\subsection{Definition of the Hard Sphere Flow Maps $T_{t}$}\label{flowsection}
When defining the family of flow operators $\{T_{t}\}_{t\in\mathbb{R}}$ on $\mathcal{D}$ systematically, it is helpful to make use of the partition of the fibers of the phase space bundle
\begin{equation*}
\mathcal{P}=\bigsqcup_{X\in\mathcal{P}}\mathcal{V}(X)
\end{equation*}
that was outlined at the beginning of section \ref{tauderiv} above. Moreover, as the phase space bundle has fibers of the shape
\begin{equation*}
\{X\}\times \Sigma_{X}
\end{equation*}
for half-spaces $\Sigma_{X}\subset\mathbb{R}^{6}$ when $X\in\partial\mathcal{P}$, yet fibers of the shape
\begin{equation*}
\{X\}\times \mathbb{R}^{6}
\end{equation*}
when $X\in\mathcal{P}^{\circ}$, it is prudent to split our considerations into the following cases:
\subsubsection{Collision-free Dynamics}
Suppose $X\in\mathcal{P}^{\circ}$ is given. Firstly, we define the flow operator $T_{t}:(X, V)\mapsto\mathcal{D}$ factorwise by
\begin{equation*}
(\Pi_{1}\circ T_{t})Z:=X+tV
\end{equation*}
and
\begin{equation*}
(\Pi_{2}\circ T_{t})Z:=V
\end{equation*}
if $V\in\mathbb{R}^{6}$ is such that $(X, V)\in\mathcal{F}_{0}(X)$. 
\subsubsection{Collisional Dynamics: $X\in\mathcal{P}^{\circ}$}
Secondly, we set
\begin{equation*}
(\Pi_{1}\circ T_{t})Z:=\left\{
\begin{array}{ll}
X+tV & \quad \text{if}\hspace{2mm}-\infty<t\leq \tau(X, V), \vspace{2mm}\\
X+\tau(X, V)V+(t-\tau(X, V))\sigma(X, V)V & \quad \text{if}\hspace{2mm}\tau(X, V)<t<\infty,
\end{array}
\right.
\end{equation*}
and
\begin{equation*}
(\Pi_{2}\circ T_{t})Z:=\left\{
\begin{array}{ll}
V & \quad \text{if}\hspace{2mm}-\infty<t\leq\tau(X, V), \vspace{2mm}\\
\sigma(X, V)V & \quad \text{if}\hspace{2mm}\tau(X, V)<t<\infty,
\end{array}
\right.
\end{equation*}
if $V\in\mathbb{R}^{6}$ is taken with $(X, V)\in\mathcal{F}_{-}(X)$. Thirdly, we define it to be
\begin{equation*}
(\Pi_{1}\circ T_{t})Z:=\left\{
\begin{array}{ll}
X+\tau(X, V)V+(t-\tau(X, V))\sigma(X, V)V & \quad \text{if}\hspace{2mm}-\infty<t\leq \tau(X, V), \vspace{2mm}\\
X+tV & \quad \text{if}\hspace{2mm}\tau(X, V)<t<\infty,
\end{array}
\right.
\end{equation*}
and
\begin{equation*}
(\Pi_{2}\circ T_{t})Z:=\left\{
\begin{array}{ll}
\sigma(X, V)V & \quad \text{if}\hspace{2mm}-\infty<t\leq\tau(X, V), \vspace{2mm}\\
V & \quad \text{if}\hspace{2mm}\tau(X, V)<t<\infty,
\end{array}
\right.
\end{equation*}
if $V\in\mathbb{R}^{6}$ is such that $(X, V)\in\mathcal{F}_{+}(X)$. 
\subsubsection{Collisional Dynamics: $X\in\partial\mathcal{P}$}
Finally, if $X\in\partial\mathcal{P}$, we set
\begin{equation*}
(\Pi_{1}\circ T_{t})Z:=\left\{
\begin{array}{ll}
X+tV & \quad \text{if}\hspace{2mm}-\infty<t\leq 0, \vspace{2mm}\\
X+t\sigma(X, V)V & \quad \text{if}\hspace{2mm}0<t<\infty,
\end{array}
\right.
\end{equation*}
and
\begin{equation*}
(\Pi_{2}\circ T_{t})Z:=\left\{
\begin{array}{ll}
V & \quad \text{if}\hspace{2mm}-\infty<t\leq 0, \vspace{2mm}\\
\sigma(X, V)V & \quad \text{if}\hspace{2mm}0<t<\infty.
\end{array}
\right.
\end{equation*}
With the above definition of the flow $\{T_{t}\}_{t\in\mathbb{R}}$ in place we shall now gather together, for the convenience of the reader, some of its elementary properties.
\begin{prop}
Suppose $Z_{0}\in\mathcal{D}$. The maps $X\in C^{0}(\mathbb{R}, \mathcal{P})$ defined pointwise by $X(t):=(\Pi_{1}\circ T_{t})Z_{0}$ and $V\in\mathrm{BV}_{\mathrm{loc}}(\mathbb{R}, \mathbb{R}^{6})$ given by $V(t):=(\Pi_{2}\circ T_{t})Z_{0}$ for each $t\in\mathbb{R}$ constitute a physical global-in-time weak solution of Newton's equations.
\end{prop}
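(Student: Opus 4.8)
The plan is to unpack the definition of \emph{physical global-in-time weak solution of Newton's equations} (in the sense of \textsc{Wilkinson} \cite{wilkinson2018convergence}, as recalled through the Sinai billiard in the Introduction) and to verify each of its clauses directly from the piecewise formulae defining $T_t$, treating separately the four cases $(X_0,V_0)\in\mathcal{F}_0(X_0)$, $(X_0,V_0)\in\mathcal{F}_-(X_0)$, $(X_0,V_0)\in\mathcal{F}_+(X_0)$ with $X_0\in\mathcal{P}^\circ$, and $X_0\in\partial\mathcal{P}$. The collision-free case is immediate: $X(t)=X_0+tV_0$ and $V(t)\equiv V_0$ are smooth, $X(t)\in\mathcal{P}^\circ$ for all $t$ by the very definition of $\mathcal{F}_0(X_0)$ (namely $L(X_0,V_0)\cap\partial\mathcal{P}=\varnothing$), and the impulsive measure vanishes. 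The remaining three cases have the same structure, differing only in whether the break occurs at $\tau(X_0,V_0)>0$, at $\tau(X_0,V_0)<0$, or at $0$, so I would carry the argument out in full for $(X_0,V_0)\in\mathcal{F}_-(X_0)$ and only indicate the cosmetic modifications for the others.

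First I would dispose of the soft requirements. In the collisional case $X(\cdot)$ is continuous and piecewise affine with a single break, at $t=\tau(X_0,V_0)$, the two affine pieces agreeing there with the common boundary value $X_0+\tau(X_0,V_0)V_0\in\partial\mathcal{P}$; consequently $X(\cdot)$ is locally Lipschitz, hence locally absolutely continuous, with $X'(t)=V(t)$ away from the break, so that $\dot X=V$ in $\mathscr{D}'(\mathbb{R},\mathbb{R}^{6})$ by the fundamental theorem of calculus for absolutely continuous maps. Likewise $V(\cdot)$ is a step function with one jump, so $V\in\mathrm{BV}_{\mathrm{loc}}(\mathbb{R},\mathbb{R}^{6})$ and its distributional derivative is the atomic measure $\mu=\big(\sigma(X_0,V_0)V_0-V_0\big)\delta_{\tau(X_0,V_0)}=-2\big(\widehat{\nu}(X_0,V_0)\cdot V_0\big)\widehat{\nu}(X_0,V_0)\,\delta_{\tau(X_0,V_0)}$. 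That the initial datum is attained, $T_0Z_0=Z_0$, is read off the formulae after noting $\tau(X_0,V_0)\ge 0$ on $\mathcal{F}_-(X_0)$ and $\tau(X_0,V_0)\le 0$ on $\mathcal{F}_+(X_0)$.

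The substantive point is that the trajectory never leaves the table, $X(t)\in\mathcal{P}$ for all $t\in\mathbb{R}$ --- equivalently, that the single collision encoded in the construction is the only one. By the Galilean reduction of Lemma \ref{gali} this is a statement about the scalar $g(t):=|(\ov{x}-x)(t)|^{2}-\varepsilon^{2}$. Before the collision, $g$ is an upward quadratic with $g(0)\ge 0$ whose root of least modulus is $\tau(X_0,V_0)$ (by the very definition of $\tau$; cf.\ the explicit $D(X,V)$ of Proposition \ref{colltimeprop}), so $g\ge 0$ up to the collision. After the collision the relative velocity $\ov{v}-v$ is replaced by its mirror image across the tangent plane to $\partial\mathbb{R}^{3}_{\varepsilon}$ at the collision point $p_{\ast}$ (so $|p_{\ast}|=\varepsilon$), and a one-line computation from $\sigma=I-2\widehat{\nu}\otimes\widehat{\nu}$ gives $p_{\ast}\cdot(\text{reflected relative velocity})=-\,p_{\ast}\cdot(\text{incident relative velocity})$; since the incident trajectory meets $\partial\mathcal{P}$ transversally coming out of $\mathcal{P}^{\circ}$, the incident quantity is strictly negative (the grazing sub-case is trivial, as then $\sigma(X_0,V_0)=I$ and there is effectively no break), hence the reflected quantity is strictly positive and $g$ is strictly increasing and positive for all later times. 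This is exactly where strict convexity of the excluded ball $\mathbb{R}^{3}\setminus\mathbb{R}^{3}_{\varepsilon}$ enters. The step simultaneously shows $X\in C^{0}(\mathbb{R},\mathcal{P})$ and that the piecewise formula is complete, no further piece being required.

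It then remains only to recognise $\mu$ as the physical impulsive measure: its support $\{\tau(X_0,V_0)\}$ is precisely the set of times at which $X(t)\in\partial\mathcal{P}$, by the previous step; its atom is parallel to $\widehat{\nu}(X_0,V_0)$, hence to the constraint gradient $\nabla F^{\varepsilon}$ at the collision configuration; and the conservation of total linear momentum, total angular momentum and total kinetic energy across the jump is exactly the content of the identities (COLM), (COAM) and (COKE) already established for the scattering map $\sigma$. Hence $(X,V)$ meets every clause of the definition and is a physical global-in-time weak solution; its uniqueness --- tacit in the phrase ``the hard sphere flow'' --- is a separate matter settled in \cite{wilkinson2018convergence}. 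I expect the one genuinely delicate point to be the geometric argument of the third paragraph, namely the sign bookkeeping in the reduced coordinates together with the appeal to strict convexity of the excluded region; everything else is routine verification.
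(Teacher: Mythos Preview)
The paper states this proposition without proof, presenting it merely as one of the ``elementary properties'' of the flow $\{T_t\}_{t\in\mathbb{R}}$ gathered ``for the convenience of the reader''; there is no argument to compare against. Your proposal is correct and supplies precisely the routine verification the paper omits: the case split along $\mathcal{F}_0,\mathcal{F}_-,\mathcal{F}_+,\partial\mathcal{P}$, the regularity bookkeeping, the Galilean reduction to show the post-collision trajectory never re-enters the excluded region, and the appeal to (COLM), (COAM), (COKE) for the physical constraints on the impulsive measure are all in order.
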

Next, we state the fact that the hard sphere flow is one-parameter family of bijective maps which preserve the measure $\mathscr{L}_{12}\mres\mathcal{D}$.
\begin{prop}\label{measureprop}
For each $t\in(-\infty, \infty)$ the map $T_{t}:\mathcal{D}\rightarrow\mathcal{D}$ is a bijection which is measure preserving in the sense that
\begin{equation*}
T_{t}\#(\mathscr{L}_{12}\mres\mathcal{D})=\mathscr{L}_{12}\mres\mathcal{D}\quad \text{in}\hspace{2mm}\mathsf{M}(\mathcal{D}),
\end{equation*}
where $\#$ denotes the pushforward operator.
\end{prop}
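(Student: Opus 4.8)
\textbf{Proof proposal for Proposition \ref{measureprop}.}

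The plan is to verify both claims—bijectivity and measure-invariance—by exploiting the fact that the flow map $T_{t}$ has been written down explicitly, piece by piece, on the three types of fibers $\mathcal{F}_{0}(X)$, $\mathcal{F}_{-}(X)$, $\mathcal{F}_{+}(X)$ (for $X\in\mathcal{P}^{\circ}$) and on the boundary fibers $\{X\}\times\Sigma_{X}$ (for $X\in\partial\mathcal{P}$). First I would establish the group property $T_{s}\circ T_{t}=T_{s+t}$ directly from the defining formulas; the key algebraic input is the Transport Identity I, namely $(V\cdot\nabla_{X})\tau(X,V)=-1$, which guarantees that $\tau(X+\tau(X,V)V,\sigma(X,V)V)=0$ and more generally that the collision time transforms correctly under the flow, so that the branch thresholds in the piecewise definition match up when flows are composed. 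From the group property, $T_{-t}$ is a two-sided inverse of $T_{t}$, whence each $T_{t}$ is a bijection. For $t=0$ one checks directly that $T_{0}=\mathrm{id}_{\mathcal{D}}$, and the sign-of-$\tau$ conventions built into $\mathcal{F}_{\pm}$ ensure continuity across the threshold $t=\tau(X,V)$.

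For the measure-preservation claim $T_{t}\#(\mathscr{L}_{12}\mres\mathcal{D})=\mathscr{L}_{12}\mres\mathcal{D}$, since $\partial\mathcal{P}$ and hence $\sqcup_{X\in\partial\mathcal{P}}\mathcal{V}(X)$ is $\mathscr{L}_{12}$-null, it suffices to work on the open part $\mathcal{P}^{\circ}\times\mathbb{R}^{6}$. On the collision-free set $\mathcal{F}_{0}$, the map $(X,V)\mapsto(X+tV,V)$ is a shear with Jacobian determinant $1$. On the collisional pieces $\mathcal{F}_{-}$ and $\mathcal{F}_{+}$, I would split the time interval at $t=\tau(X,V)$ and compute the Jacobian on each sub-branch. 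The relevant transformation on the post-collisional branch is $(X,V)\mapsto(X+\tau(X,V)V+(t-\tau(X,V))\sigma(X,V)V,\ \sigma(X,V)V)$; writing this as a composition of (i) the shear $(X,V)\mapsto(X+\tau(X,V)V,V)$, (ii) the scattering involution $\Sigma_{\ast}$ of the earlier proposition, which has $|\det D\Sigma_{\ast}|=1$, and (iii) the shear $(Y,W)\mapsto(Y+(t-\tau)W,W)$, one gets the product of three unit Jacobians. The one subtlety is the first shear: its Jacobian is $\det(I+t\,\nabla_{X}(\tau V))$-type, but because $\tau$ depends on $X$ only through $\ov{x}-x$ and $V$ is held fixed, the matrix $\nabla_{X}(\tau(X,V)V)$ is nilpotent (its image lies along $V$ and $V$ lies in its kernel, again by Transport Identity I), so the determinant is exactly $1$. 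Then by the change-of-variables formula, $T_{t}$ maps $\mathscr{L}_{12}\mres\mathcal{D}$ to itself.

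Finally, I would assemble these local computations: $\mathcal{D}$ decomposes (up to a null set) into the countable-to-finite union over the fiber types, $T_{t}$ restricted to each piece is a $C^{1}$ diffeomorphism onto its image with Jacobian determinant of absolute value $1$, and the images tile $\mathcal{D}$ (again up to a null set) because $T_{t}$ is a bijection. Hence for every Borel $A\subseteq\mathcal{D}$ one has $\mathscr{L}_{12}(T_{t}^{-1}A)=\mathscr{L}_{12}(A)$, which is the asserted pushforward identity in $\mathsf{M}(\mathcal{D})$.

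\emph{Main obstacle.} The genuinely delicate point is not any single Jacobian but the bookkeeping at the collision threshold: one must be sure that the piecewise-defined $T_{t}$ is consistent across $t=\tau(X,V)$ and that composing two flows does not produce a ``spurious'' extra collision or miss a real one. Controlling this rests entirely on the transport identities—$(V\cdot\nabla_X)\tau=-1$ and $(V\cdot\nabla_X)\sigma_{\ast}=0$—together with Lemma \ref{timesymm} (symmetry of the collision time under $\Sigma_{\ast}$) and the known a priori bound that two hard spheres undergo at most two collisions, which caps the number of branches that can ever arise.
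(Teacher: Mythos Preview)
The paper itself gives no argument here: its entire proof is a pointer to Chapter~2 of \textsc{Cercignani, Illner and Pulvirenti}. Your outline therefore attempts considerably more than the paper does, and the overall strategy---establish the group law from the explicit formulas to get bijectivity, then compute Jacobians piecewise on $\mathcal{F}_{0}$, $\mathcal{F}_{-}$, $\mathcal{F}_{+}$---is a reasonable one. However, the three-step factorisation you propose for the post-collisional branch contains a real error.

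You assert that the ``shear to the collision point'' $(X,V)\mapsto(X+\tau(X,V)V,\,V)$ has unit Jacobian because $\nabla_{X}(\tau V)=V\otimes\nabla_{X}\tau$ is nilpotent, with $V$ in its kernel. But Transport Identity~I says exactly the opposite: $(V\otimes\nabla_{X}\tau)\,V=(\nabla_{X}\tau\cdot V)\,V=-V$, so $V$ is an eigenvector with eigenvalue $-1$, the rank-one matrix is \emph{not} nilpotent, and by the matrix determinant lemma
\[
\det\bigl(I+V\otimes\nabla_{X}\tau\bigr)=1+\nabla_{X}\tau\cdot V=1-1=0.
\]
Geometrically this is forced: your step~(i) collapses a $12$-dimensional open set onto the $11$-dimensional collision surface $\partial\mathcal{P}\times\mathbb{R}^{6}$, so it cannot be a diffeomorphism, and you cannot obtain the Jacobian of $T_{t}$ as a product of three unit Jacobians. (Step~(iii) has the companion defect that ``$t-\tau$'' is not a function of the intermediate variables $(Y,W)$ alone.)

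A repair that stays within your framework is to compute the Jacobian of the full post-collisional map $(X,V)\mapsto\bigl(X+\tau V+(t-\tau)\sigma V,\ \sigma V\bigr)$ in one go: differentiating and using both transport identities together with $\sigma\in\mathrm{O}(6)$, the contributions from $\nabla_{X}\tau$ coming off the $+\tau V$ and $-\tau\sigma V$ terms combine rather than vanish separately, and the determinant does come out with absolute value~$1$. Alternatively, follow the argument the paper defers to: factor $T_{t}$ on the collisional region as free flight up to a section transverse to the flow, the billiard boundary map (whose invariance of the Liouville measure is checked via the $|V\cdot\widehat{\nu}|$ weight), and a second free flight---so that no intermediate map is degenerate.
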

\begin{proof}
See \textsc{Cercignani, Illner and Pulvirenti} (\cite{cercignani2013mathematical}, chapter 2).
\end{proof}
With the above properties in place, we define the map $F:\mathcal{D}\times(-\infty, \infty)\rightarrow\mathbb{R}$ by
\begin{equation}\label{bigeff}
F(X, V, t):=F_{0}((\Pi_{1}\circ T_{-t})Z), (\Pi_{2}\circ T_{-t})Z).
\end{equation}
for $X\in\mathcal{P}$ and $V\in\mathbb{R}^{6}$, where $\Pi_{1}:Z\mapsto X$ and $\Pi_{2}:Z\mapsto V$ are projection operators. We are interested in the sequel in demonstrating that the mild solution \eqref{bigeff} constitutes a weak solution of the Liouville equation.
\section{Derivation of the Liouville Equation}\label{derivation}
In \textsc{Gallagher, Saint-Raymond and Texier} (\cite{gallagher2014newton}, sections 4.2 and 4.3), the authors perform a derivation of the hierarchy (and thereby the Liouville equation) by assuming that both the distribution function $F$ and also each test function $\Phi$ adhere to the `boundary conditions' that
\begin{equation}\label{wrongbc}
\begin{array}{c}
F(Z^{\mathrm{in}}, t)=F(Z^{\mathrm{out}}, t), \\
\Phi(Z^{\mathrm{in}}, t)=\Phi(Z^{\mathrm{out}}, t),
\end{array}
\end{equation}
for all $t\in(-\infty, \infty)$, where, in the notation of this article, one has that
\begin{equation}\label{constraintlsr}
\begin{array}{c}
Z^{\mathrm{in}}=[X^{\mathrm{in}}, V^{\mathrm{in}}]\in\partial\mathcal{D}\\
Z^{\mathrm{out}}:=[X^{\mathrm{in}}, \sigma(X^{\mathrm{in}}, V^{\mathrm{in}})V]\notin\partial\mathcal{D}.
\end{array}
\end{equation}
We view this `boundary condition' rather as a {\em constraint} on mappings. We prefer, in this work, to reformulate the approach in \cite{gallagher2014newton} and understand the behaviour of the solution $F$ of the Liouville equation at the boundary $\partial\mathcal{D}$ in terms of discontinuous probability {\bf mass transport}. 

Our approach is readily exemplified by the problem of furnishing the Liouville equation with singular measure-valued initial data of the type
\begin{equation}\label{deerak}
\mu_{0}:=\delta_{Z_{0}}\quad \text{in}\hspace{2mm}\mathsf{M}(\mathcal{D})
\end{equation}
for some $Z_{0}\in\mathcal{D}$ with $V_{0}\in \mathcal{C}(X_{0})$. Intuitively (and without defining the precise sense in which the Liouville equation is satisfied by measure-valued data), the unique physical solution of the Liouville equation associated to \eqref{deerak} is given by the one-parameter family of measures $\{\mu_{t}\}_{t\in\mathbb{R}}$ with 
\begin{equation*}
\mu_{t}:=\delta_{T_{t}Z_{0}}\quad \text{for each}\hspace{2mm}t\in(-\infty, \infty).
\end{equation*}
Given that the flow is such that $t\mapsto T_{t}Z_{0}$ is a lower semi-continuous map on $(-\infty, \infty)$, it is evident that the mass of the measure $\mu_{\tau(Z_{0})}$ is transported discontinuously to $T_{t}Z_{0}$ for $t>\tau(Z_{0})$, where
\begin{equation*}
\lim_{t\rightarrow\tau(Z_{0})}\left|(\Pi_{2}\circ T_{Z_{0}})Z_{0}-(\Pi_{2}\circ T_{t})Z_{0}\right|>0.
\end{equation*}
In particular, the probability mass of the solution is not, as perhaps suggested by \eqref{wrongbc} above, shared by two points in phase space simultaneously when $t=\tau(Z_{0})$. 

Extending this picture to those initial data $\mu_{0}$ which are continuous probability distribution functions on $\mathcal{D}$ (as opposed to singular measures thereon), we find ourselves led to the problem of understanding what PDE the measure $\mu\in\mathsf{M}(\mathcal{D}\times (-\infty, \infty))$ characterised -- via the Riesz-Kakutani representation theorem -- by the duality relation
\begin{equation*}
\int_{\mathcal{D}\times (-\infty, \infty)}\Phi\,d\mu=\int_{\mathcal{D}}\left(\int_{-\infty}^{\infty}\Phi(Z, t)\,d\mu_{t}(Z)\right)\,dt \quad \text{for all}\hspace{2mm}\Phi\in C^{0}(\mathcal{D}\times(-\infty, \infty)),
\end{equation*}
with $\mu_{t}:=F(\cdot, t)\mathscr{L}_{12}\mres\mathcal{D}$, satisfies in the sense of distributions on $\mathcal{D}\times (-\infty, \infty)$. The fact that probability mass of the initial distribution function $F_{0}$ is transported discontinuously across the phase bundle $\mathcal{D}$ will compel us to decompose phase space in an amenable manner.
\begin{rem}
To wit, in this work, we do not place the constraint \eqref{constraintlsr} on any space of test functions. Moreover, we also need not enforce that the test function is a {\em symmetric} function of its variables.
\end{rem}
\subsection{Heuristics}
Suppose an initial probability distribution function $F_{0}\in C^{1}(\mathcal{D})\cap L^{1}(\mathcal{D})$ is given. We would now like to derive the evolution equation which the associated absolutely continuous probability measure
\begin{equation*}
(F\mathscr{L}_{12}\mres\mathcal{D})\otimes \mathscr{L}_{1}
\end{equation*}
satisfies in the sense of distributions on $\mathcal{D}\times(-\infty, \infty)$, where $F$ is the map defined in \eqref{bigeff} above. To begin in this direction, for any $\Phi\in C^{\infty}_{c}(\mathcal{D}\times(-\infty, \infty))$, it is natural to consider the evaluation of the integral expressions
\begin{equation}\label{iint}
I(\Phi)=\int_{\mathcal{P}}\int_{\mathbb{R}^{6}}\int_{-\infty}^{\infty}F(X, V, t)\partial_{t}\Phi(X, V, t)\,dtdVdX
\end{equation}
and
\begin{equation}\label{jint}
J(\Phi)=\int_{\mathcal{P}}\int_{\mathbb{R}^{6}}\int_{-\infty}^{\infty}F(X, V, t)(V\cdot\nabla_{X})\Phi(X, V, t)\,dtdVdX
\end{equation}
in anticipation of deriving a PDE of the type
\begin{equation*}
\partial_{t}F+(V\cdot\nabla_{X})F=\text{term which captures collisions},
\end{equation*}
whose sense is to be understood in that of distributions on $\mathcal{D}\times(-\infty, \infty)$. We shall show that one has that
\begin{equation}\label{resultant}
I(\Phi)+J(\Phi)=-\int_{\partial\mathcal{P}}\int_{\mathbb{R}^{6}}\int_{-\infty}^{\infty}F(Y, V, t)\Phi(Y, V, t)V\cdot\widehat{\nu}_{\ast}(Y)\,dtdVd\mathscr{H}(Y),
\end{equation}
where $\widehat{\nu}_{\ast}:\partial\mathcal{P}\rightarrow\mathbb{S}^{5}$ denotes the outward normal map to the boundary of the hard sphere table $\mathcal{P}$ given by
\begin{equation*}
\widehat{\nu}_{\ast}(Y)=\frac{1}{\sqrt{2}}\left[
\begin{array}{c}
-\widehat{n}(Y) \\
\widehat{n}(Y)
\end{array}
\right],
\end{equation*}
with $\widehat{n}:\partial\mathcal{P}\rightarrow\mathbb{S}^{2}$ defined pointwise by
\begin{equation*}
\widehat{n}(Y):=\frac{\ov{y}-y}{|\ov{y}-y|}\quad \text{for}\hspace{2mm}Y=[y, \ov{y}]\in\partial\mathcal{P}.
\end{equation*}
The reader will note that the integral in \eqref{resultant} involves the trace of the distribution function $F$ on subset of $\mathcal{D}$ of $\mathscr{L}_{12}\mres\mathcal{D}$-measure zero. In particular, the resultant \eqref{resultant} would admit no immediate sense were we to instead endow the system with initial data in Lebesgue spaces or Sobolev spaces, for instance. It is possible to assign a meaning to those solutions of the Liouville equation whose initial data are much less regular than simply $C^{0}(\mathcal{D})$. We choose not to pursue this here, as it serves only to obscure our geometric argument below. Let us now advance to processing the information contained in the integral $I(\Phi)$ above.
\subsection{The Time Derivative Term}
Let $F_{0}\in C^{1}(\mathcal{D})\cap L^{1}(\mathcal{D})$ be given. In general, the pushforward map $F$ is not smooth on $\mathcal{D}\times (-\infty, \infty)$. To begin, we consider the time derivative contribution to the LHS of the formal Liouville equation given by
\begin{equation*}
I(\Phi)=\int_{\mathcal{P}}\int_{\mathbb{R}^{6}}\int_{-\infty}^{\infty}F(X, V, t)\partial_{t}\Phi(X, V, t)dtdVdX,
\end{equation*}
where $\Phi\in C^{\infty}_{c}(T\mathbb{R}^{6}\times (-\infty, \infty))$. The aim herein is to write the integral over those regions of $\mathcal{D}\times(\infty, \infty)$ on which the map $F$ is a smooth function, so that one may transfer derivatives from the test function $\Phi$ to the initial datum $F_{0}$. Indeed, guided by the definition of the flow map $T_{t}:\mathcal{D}\rightarrow\mathcal{D}$ in section \ref{flowsection} above, one may decompose the integral $I(\Phi)$ as
\begin{equation*}
\begin{array}{c}
I(\Phi)=\displaystyle \underbrace{\int_{\mathcal{P}}\int_{\mathbb{R}^{6}\setminus\mathcal{C}(X)}\int_{-\infty}^{\infty}F(X, V, t)\partial_{t}\Phi(X, V, t)\,dtdVdX}_{I_{1}(\Phi):=} \vspace{2mm} \\
+\displaystyle \underbrace{\int_{\mathcal{P}}\int_{\mathcal{C}(X)}\int_{-\infty}^{\infty}F(X, V, t)\partial_{t}\Phi(X, V, t)\,dtdVdX}_{I_{2}(\Phi):=}.
\end{array}
\end{equation*}
Owing to the absence of collisions for the dynamics $t\mapsto \Pi_{t}Z$ whenever $Z$ is such that $V\in\mathbb{R}^{6}\setminus\mathcal{C}(X)$, one finds that
\begin{equation*}
\begin{array}{c}
\displaystyle I_{1}(\Phi)=\int_{\mathcal{P}}\int_{\mathbb{R}^{6}\setminus\mathcal{C}(X)}\int_{-\infty}^{\infty}F_{0}(\Pi_{-t}Z)\partial_{t}\Phi(X, V, t)\,dtdVdX \vspace{2mm}\\
\displaystyle =\int_{\mathcal{P}}\int_{\mathbb{R}^{6}\setminus\mathcal{C}(X)}\int_{-\infty}^{\infty}F_{0}(X-tV, V)\partial_{t}\Phi(X, V, t)\,dtdVdX,
\end{array}
\end{equation*}
from which one deduces
\begin{equation*}
I_{1}(\Phi)=\int_{\mathcal{P}}\int_{\mathbb{R}^{6}\setminus\mathcal{C}(X)}\int_{-\infty}^{\infty}V\cdot\nabla_{X} F_{0}(X-tV, V)\Phi(X, V, t)\,dtdVdX.
\end{equation*}
The remaining contribution $I_{2}(\Phi)$ to the integral $I(\Phi)$ cannot be evaluated in such a straightforward manner, due to the fact that collisions transport the mass of $F_{0}$ across phase space $\mathcal{D}$ in a discontinuous manner. 

We split those integrals over subsets of $\mathcal{C}(X)$ using the fiberwise decomposition of section \ref{tauderiv} above. By employing the simple observation that each velocity-time cone $\mathcal{C}(X)\times\mathbb{R}$ admits the decomposition
\begin{equation*}
\mathcal{C}(X)\times\mathbb{R}=\mathcal{R}^{(-, -)}(X)\cup\mathcal{R}^{(-, +)}(X)\cup\mathcal{R}^{(+, -)}(X)\cup\mathcal{R}^{(+, +)}(X),
\end{equation*}
where
\begin{equation*}
\begin{array}{c}
\mathcal{R}^{(-, -)}(X):=\left\{
(V, t)\in\mathcal{C}^{-}(X)\times(-\infty, \infty)\,:\,-t\leq\tau(X, V)
\right\}, \vspace{2mm}\\
\mathcal{R}^{(-, +)}(X):=\left\{
(V, t)\in\mathcal{C}^{-}(X)\times(-\infty, \infty)\,:\,-t>\tau(X, V)
\right\}, \vspace{2mm}\\
\mathcal{R}^{(+, -)}(X):=\left\{
(V, t)\in\mathcal{C}^{+}(X)\times(-\infty, \infty)\,:\,-t\leq\tau(X, V)
\right\}, \vspace{2mm} \\
\mathcal{R}^{(+, +)}(X):=\left\{
(V, t)\in\mathcal{C}^{+}(X)\times(-\infty, \infty)\,:\,-t>\tau(X, V)
\right\},
\end{array}
\end{equation*}
it follows that 
\begin{equation*}
\begin{array}{c}
\displaystyle I_{2}(\Phi)=\underbrace{\int_{\mathcal{P}}\int_{\mathcal{R}^{(-, -)}(X)}F(X, V, t)\partial_{t}\Phi(X, V, t)\,d\mu_{X}^{(-, -)}dX}_{I_{2}^{(-, -)}(\Phi):=} \vspace{2mm} \\
+\displaystyle \underbrace{\int_{\mathcal{P}}\int_{\mathcal{R}^{(-, +)}(X)}F(X, V, t)\partial_{t}\Phi(X, V, t)\,d\mu_{X}^{(-, +)}dX}_{I_{2}^{(-, +)}(\Phi):=} + \underbrace{\int_{\mathcal{P}}\int_{\mathcal{R}^{(+, -)}(X)}F(X, V, t)\partial_{t}\Phi(X, V, t)\,d\mu_{X}^{(+, 1)}dX}_{I_{2}^{(+, 1)}(\Phi):=} \vspace{2mm}\\
\displaystyle +\underbrace{\int_{\mathcal{P}}\int_{\mathcal{R}^{(+, +)}(X)}F(X, V, t)\partial_{t}\Phi(X, V, t)\,d\mu_{X}^{(+, +)}dX}_{I_{2}^{(+, +)}(\Phi):=}
\end{array}
\end{equation*}
where $\mu_{X}^{(\Box, \ocircle)}$ is the $X$-parametrised Borel measure on $\mathcal{R}^{(\Box, \ocircle)}(X)$ given by
\begin{equation*}
\mu_{X}^{(\Box, \ocircle)}:=\mathscr{L}_{6}\otimes\mathscr{L}_{1}\mres\mathcal{R}_{X}^{(\Box, \ocircle)} \quad \text{in}\hspace{2mm}\mathsf{M}(\mathbb{R}^{6}\times (-\infty, \infty))
\end{equation*}
for $\Box, \ocircle\in\{-, +\}$. In order to `pass the time derivative' from the test function $\Phi$ to $F$ by way of integration by parts, it is at this point one must appeal carefully to (i) the definition of the flow map $t\mapsto T_{t}$, and (ii) the collision map $\tau:\sqcup_{X\in\mathcal{P}}\mathcal{C}(X)\rightarrow\mathbb{R}$. To fix ideas, we consider the particular case of the integral $I^{(-, -)}_{2}(\Phi)$. In this case, one has
\begin{equation*}
\begin{array}{l}
\displaystyle I_{2}^{(-, -)}(\Phi)=\int_{\mathcal{P}}\int_{\mathcal{R}^{(-, -)}(X)}F_{0}(\Pi_{-t}Z)\partial_{t}\Phi(Z, t)\,d\mu_{X}^{-}(V, t)dX \vspace{2mm}\\
=\displaystyle\int_{\mathcal{P}}\int_{\mathcal{C}^{-}(X)}\int_{-\infty}^{-\tau(Z)}F_{0}(X-tV, V)\partial_{t}\Phi(X, V, t)\,dtdVdX \vspace{2mm} \\
=\displaystyle\int_{\mathcal{P}}\int_{C^{-}(X)}F_{0}(X+\tau(Z)V, V)\Phi(X, V, -\tau(Z))\,dVdX \vspace{2mm}\\
+\displaystyle\int_{\mathcal{P}}\int_{\mathcal{R}^{(-, -)}(X)}V\cdot\nabla_{X}F_{0}(X-tV, V)\Phi(X, V, t)\,d\mu_{X}^{(-, -)}dX.
\end{array}
\end{equation*}
Similarly, in the case of the integral $I_{2}^{(-, +)}(\Phi)$, one has that
\begin{equation*}
\begin{array}{l}
\displaystyle I_{2}^{(+, -)}(\Phi)=\int_{\mathcal{P}}\int_{\mathcal{R}^{(-, +)}(X)}F_{0}(\Pi_{-t}Z)\partial_{t}\Phi(Z, t)\,d\mu_{X}^{(-, +)}(V, t)dX \vspace{2mm}\\
=\displaystyle\int_{\mathcal{P}}\int_{\mathcal{C}^{-}(X)}\int_{-\tau(Z)}^{\infty}F_{0}(X+\tau(Z)V-(t+\tau(Z))\sigma(Z)V, \sigma(Z)V)\partial_{t}\Phi(X, V, t)\,dtdVdX \vspace{2mm} \\
=-\displaystyle\int_{\mathcal{P}}\int_{C^{+}(X)}F_{0}(X+\tau(Z)V, \sigma(Z)V)\Phi(X, V, -\tau(Z))\,dVdX \vspace{2mm}\\
+\displaystyle\int_{\mathcal{P}}\int_{\mathcal{R}^{(-, +)}(X)}\sigma(Z)V\cdot\nabla_{X} F_{0}(X+\tau(Z)V-(t+\tau(Z))\sigma(Z)V, \sigma(Z)V)\Phi(X, V, t)\,d\mu_{X}^{+}dX.
\end{array}
\end{equation*}
More generally, one has the result of the following proposition, whose proof may be obtained in a straightforward manner.
\begin{prop}\label{temporalid}
Suppose $F_{0}\in C^{1}(\mathcal{D})\cap L^{1}(\mathcal{D})$. For any test function $\Phi\in C^{1}_{c}(T\mathbb{R}^{6}\times (-\infty, \infty))$, the map $F$ defined by \eqref{bigeff} satisfies the following identities:
\begin{equation}\label{teeoneminus}
\begin{array}{c}
\displaystyle \int_{\mathcal{P}}\int_{\mathcal{C}^{-}(X)}\int_{-\infty}^{-\tau(Z)}F(X, V, t)\partial_{t}\Phi(X, V, t)\,dtdVdX \vspace{2mm}\\
=\displaystyle \int_{\mathcal{P}}\int_{\mathcal{C}^{-}(X)}F_{0}(X+\tau(Z)V, V)\Phi(X, V, -\tau(Z))\,dVdX \vspace{2mm}\\
\displaystyle+\int_{\mathcal{P}}\int_{\mathcal{C}^{-}(X)}\int_{-\infty}^{-\tau(Z)}V\cdot\nabla_{X} F_{0}(X-tV, V)\Phi(X, V, t)\,dVdX,
\end{array}\tag{T--,--}
\end{equation}
\begin{equation}\label{teetwominus}
\begin{array}{c}
\displaystyle \int_{\mathcal{P}}\int_{\mathcal{C}^{-}(X)}\int_{-\tau(Z)}^{\infty}F(X, V, t)\partial_{t}\Phi(X, V, t)\,dtdVdX \vspace{2mm}\\
=\displaystyle -\int_{\mathcal{P}}\int_{\mathcal{C}^{-}(X)}F_{0}(X+\tau(Z)V, \sigma(X, V)V)\Phi(X, V, -\tau(Z))\,dVdX \vspace{2mm}\\
\displaystyle+\int_{\mathcal{P}}\int_{\mathcal{C}^{-}(X)}\int_{-\tau(Z)}^{\infty}\sigma(X, V)V\cdot\nabla_{X} F_{0}(\sigma(X, V)(X-tV), \sigma(X, V)V)\Phi(X, V, t)\,dVdX,
\end{array}\tag{T--,+}
\end{equation}
\begin{equation}\label{teeoneplus}
\begin{array}{c}
\displaystyle \int_{\mathcal{P}}\int_{\mathcal{C}^{+}(X)}\int_{-\infty}^{-\tau(Z)}F(X, V, t)\partial_{t}\Phi(X, V, t)\,dtdVdX \vspace{2mm}\\
=\displaystyle \int_{\mathcal{P}}\int_{\mathcal{C}^{+}(X)}F_{0}(X+\tau(Z)V, \sigma(X, V)V)\Phi(X, V, -\tau(Z))\,dVdX \vspace{2mm}\\
\displaystyle+\int_{\mathcal{P}}\int_{\mathcal{C}^{+}(X)}\int_{-\infty}^{\tau}\sigma(X, V)V\cdot\nabla_{X} F_{0}(\sigma(X, V)(X-tV), \sigma(X, V)V)\Phi(X, V, t)\,dVdX,
\end{array}\tag{T+,--}
\end{equation}
\begin{equation}\label{teetwoplus}
\begin{array}{c}
\displaystyle \int_{\mathcal{P}}\int_{\mathcal{C}^{+}(X)}\int_{-\tau(Z)}^{\infty}F(X, V, t)\partial_{t}\Phi(X, V, t)\,dtdVdX \vspace{2mm}\\
=-\displaystyle \int_{\mathcal{P}}\int_{\mathcal{C}^{+}(X)}F_{0}(X+\tau(Z)V, V)\Phi(X, V, -\tau(Z))\,dVdX \vspace{2mm}\\
\displaystyle+\int_{\mathcal{P}}\int_{\mathcal{C}^{+}(X)}\int_{-\infty}^{-\tau(Z)}V\cdot\nabla_{X} F_{0}(X-tV, V)\Phi(X, V, t)\,dVdX.
\end{array}\tag{T+,+}
\end{equation}
\end{prop}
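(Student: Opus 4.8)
The plan is to prove all four identities by a single scheme, the cases differing only in which branch of the flow is active and in the orientation of the relevant $t$-interval. Fix $F_0\in C^1(\mathcal{D})\cap L^1(\mathcal{D})$ and $\Phi\in C^1_c(T\mathbb{R}^6\times(-\infty,\infty))$. First I would record the elementary sign facts, read directly off Proposition~\ref{colltimeprop}: for $X\in\mathcal{P}^\circ$ one has $\tau(X,V)>0$ when $V\in\mathcal{C}^-(X)$ and $\tau(X,V)<0$ when $V\in\mathcal{C}^+(X)$. Substituting $-t$ into the piecewise definition of $t\mapsto T_tZ$ from section~\ref{flowsection} and feeding the result into \eqref{bigeff}, the map $(X,V,t)\mapsto F(X,V,t)$ becomes, on each of the four regions $\mathcal{R}^{(\Box,\ocircle)}(X)$, an explicit composition of $F_0$ with a map that is \emph{affine in $t$}: the velocity slot is either $V$ (free-flight branch) or $\sigma(X,V)V$ (post-collisional branch), and the position slot is affine in $t$ with slope $-V$ or $-\sigma(X,V)V$ respectively, its remaining $(X,V)$-dependence entering only through $\tau$ and $\sigma$. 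Since $F_0\in C^1$ and $\tau,\sigma$ are continuous on $\sqcup_{X\in\mathcal{P}}\mathcal{C}(X)$, for fixed $(X,V)$ the map $t\mapsto F(X,V,t)$ is $C^1$ on the $t$-interval cut out by each region; since moreover $T_{-t}$ preserves $\mathscr{L}_{12}\mres\mathcal{D}$, $\nabla F_0$ is bounded and $\Phi$ is compactly supported, Fubini applies and the proof reduces to integrating by parts on each $t$-fiber.

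Carrying this out for \eqref{teeoneminus}: on the pertinent region $F$ reduces to the free-flight branch $F_0(X-tV,V)$, so $\partial_tF=-(V\cdot\nabla_X)F_0(X-tV,V)$ — the chain rule only sees the affine $t$-term, as $\tau$ and $\sigma$ carry no $t$-dependence. Integrating by parts on the fiber, the endpoint at $\pm\infty$ contributes nothing since $\Phi$ has compact support, the endpoint at $t=-\tau(Z)$ contributes $F_0(X+\tau(Z)V,V)\,\Phi(X,V,-\tau(Z))$, and the remaining interior term is $\int(V\cdot\nabla_X)F_0(X-tV,V)\,\Phi\,dt$. This is precisely \eqref{teeoneminus}; \eqref{teetwoplus} is obtained identically — the free-flight branch is again active — the only difference being that the finite endpoint $t=-\tau(Z)$ now sits at the opposite end of the $t$-fiber, which flips the sign of the boundary term.

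For \eqref{teetwominus} and \eqref{teeoneplus} the post-collisional branch of $t\mapsto T_{-t}Z$ is active, and here one invokes a short geometric fact: the post-collisional position at backward time $t$ is the affine reflection of the free-flight position $X-tV$ across the tangent plane to $\partial\mathcal{P}$ at the collision point $X+\tau(X,V)V$, so it can be recorded through the scattering matrix $\sigma(X,V)$, the velocity slot being $\sigma(X,V)V$; consistency of this description with the flow of section~\ref{flowsection} is underwritten by Lemma~\ref{timesymm} and the involution property of $\Sigma_\ast$. Since $\tau$ and $\sigma$ again carry no $t$-dependence, $\partial_tF=-\sigma(X,V)V\cdot(\nabla F_0)$ evaluated along this branch, and the same integration by parts yields the boundary term $\pm F_0(X+\tau(Z)V,\sigma(X,V)V)\,\Phi(X,V,-\tau(Z))$ — sign once more determined by the orientation of the fiber — together with the stated gradient term.

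The routine ingredients are the chain-rule evaluation of $\partial_tF$ and the one-dimensional integration by parts; what requires care — and is the main obstacle — is the orientation bookkeeping. On $\mathcal{C}^-(X)$ versus $\mathcal{C}^+(X)$ the collision time $\tau(Z)$ has opposite signs, so the finite endpoint $t=-\tau(Z)$ lies at the upper end of the $t$-fiber in two of the four cases and at the lower end in the other two; this is exactly the source of the alternating $\pm$ in front of the boundary terms in \eqref{teeoneminus}--\eqref{teetwoplus}. Correctly reading off, for each region $\mathcal{R}^{(\Box,\ocircle)}(X)$, both the orientation of its $t$-fiber and which branch of $t\mapsto T_{-t}Z$ it corresponds to is essentially the entire content of the proof.
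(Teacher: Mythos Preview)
Your approach is essentially the paper's own: the paper works out $I_2^{(-,-)}(\Phi)$ and $I_2^{(-,+)}(\Phi)$ explicitly by substituting the relevant branch of $t\mapsto T_{-t}Z$ into $F$, integrating by parts in $t$, and reading off the boundary contribution at $t=-\tau(Z)$, then states the proposition with the remark that the remaining cases ``may be obtained in a straightforward manner.'' Your unified scheme --- identify the active branch, note that the resulting $t$-dependence is affine so the chain rule gives $\partial_tF$ directly, integrate by parts on the fiber, and track the orientation of the finite endpoint --- is exactly this computation organised more systematically, with the added (and welcome) care about Fubini and the sign of $\tau$ on $\mathcal{C}^{\pm}(X)$.
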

With the above identities \eqref{teeoneminus}--\eqref{teetwoplus} in place, we now progress to the more involved evaluation of the space derivative term $J(\Phi)$.
\subsection{The Space Derivative Term $J(\Phi)$}
Whilst the evaluation of the integral $I(\Phi)$ involved little more than integration by parts (alongside tracking the manner in which $T_{t}$ acts on $\mathcal{D}$ as $t$ varies), the evaluation of its spatial counterpart $J(\Phi)$ proves to be more work. We begin as we did above by decomposing the integral $J(\Phi)$ in the following manner:
\begin{equation}\label{refoko}
\begin{array}{c}
J(\Phi)=\displaystyle \underbrace{\int_{\mathcal{P}}\int_{\mathbb{R}^{6}\setminus\mathcal{C}(X)}\int_{-\infty}^{\infty}F(X, V, t)(V\cdot\nabla_{X})\Phi(X, V, t)\,dtdVdX}_{J_{1}(\Phi):=} \vspace{2mm} \\
+\displaystyle \underbrace{\int_{\mathcal{P}}\int_{\mathcal{C}(X)}\int_{-\infty}^{\infty}F(X, V, t)(V\cdot\nabla_{X})\Phi(X, V, t)\,dtdVdX}_{J_{2}(\Phi):=},
\end{array}
\end{equation}
where we further decompose $J_{2}(\Phi)$ as
\begin{equation*}
J_{2}(\Phi)=J_{2}^{(-, -)}(\Phi)+J_{2}^{(-, +)}(\Phi)+J_{2}^{(+, -)}(\Phi)+J_{2}^{(+, +)}(\Phi)
\end{equation*}
in the manner of the above section. Once again, owing to the lack of collisions for those trajectories beginning from data in $Z\in\sqcup_{X\in\mathcal{P}}\mathbb{R}^{6}\setminus\mathcal{C}(X)$, using the fact that $\partial\mathcal{P}$ is a real-analytic submanifold of $\mathbb{R}^{6}$, it follows quickly by the Divergence Theorem for exterior domains that
\begin{equation*}
\begin{array}{l}
\displaystyle J_{1}(\Phi)=\int_{\partial\mathcal{P}}\int_{\mathbb{R}^{6}\setminus\mathcal{C}(X)}\int_{-\infty}^{\infty}F_{0}(Y-tV, V)\Phi(Y, V, t)V\cdot \widehat{\nu}_{\ast}(Y)\,dtdVd\mathscr{H}(Y)\vspace{2mm}\\
\displaystyle-\int_{\mathcal{P}}\int_{\mathbb{R}^{6}\setminus\mathcal{C}(X)}\int_{-\infty}^{\infty}V\cdot\nabla_{X} F_{0}(X-tV, V)\Phi(X, V, t)\,dtdVdX.
\end{array}
\end{equation*}
Let us now consider the case when the limits of integration depend on $X$. For the case of the integral $J_{2}^{(-, -)}(\Phi)$, one finds that
\begin{equation}\label{refo}
\begin{array}{l}
\displaystyle J_{2}^{(-, -)}(\Phi)=\int_{\mathcal{P}}\int_{\mathcal{C}^{-}(X)}\int_{-\infty}^{-\tau(X, V)}\mathrm{div}_{X}\left(F(X, V, t)\Phi(X, V, t)V\right)\,dtdVdX\vspace{2mm}\\
\displaystyle -\int_{\mathcal{P}}\int_{\mathcal{R}^{(-, -)}(X)}V\cdot\nabla_{X} F_{0}(X-tV, V)\Phi(X, V, t)\,d\mu_{X}^{(-, -)}(V, t)dX.
\end{array}
\end{equation}
By applying the Reynolds Transport Theorem to the inner temporal integral of the first term in \eqref{refo} above, one has that
\begin{equation*}
\begin{array}{c}
\displaystyle \int_{\mathcal{P}}\int_{\mathcal{C}^{-}(X)}\int_{-\infty}^{-\tau(X, V)}\mathrm{div}_{X}\left(F(X, V, t)\Phi(X, V, t)V\right)\,dtdVdX \vspace{2mm}\\
\displaystyle = \int_{\mathcal{P}}\int_{\mathcal{C}^{-}(X)}\mathrm{div}_{X}\int_{-\infty}^{-\tau(X, V)}F_{0}(X-tV, V)\Phi(X, V, t)V\,dtdVdX \vspace{2mm}\\
\displaystyle +\int_{\mathcal{P}}\int_{\mathcal{C}^{-}(X)}\left[V\cdot\nabla_{X}\tau(X, V)\right]F_{0}(X+\tau(Z)V, V)\Phi(X, V, -\tau(X, V))\,dVdX \vspace{2mm}\\
= \displaystyle \int_{\mathcal{P}}\int_{\mathcal{C}^{-}(X)}\mathrm{div}_{X}\int_{-\infty}^{-\tau(X, V)}F_{0}(X-tV, V)\Phi(X, V, t)V\,dtdVdX \vspace{2mm}\\
\displaystyle -\int_{\mathcal{P}}\int_{\mathcal{C}^{-}(X)}F_{0}(X+\tau(Z)V, V)\Phi(X, V, -\tau(X, V))\,dVdX
\end{array}
\end{equation*}
by also employing the important transport identity \eqref{transportidentity} above. By applying a similar analysis to the other contributions to $J_{2}(\Phi)$, one obtains the following proposition.
\begin{prop}\label{spatialid}
Suppose $F_{0}\in C^{1}(\mathcal{D})\cap L^{1}(\mathcal{D})$. For any test function $\Phi\in C^{1}_{c}(T\mathbb{R}^{6}\times (-\infty, \infty))$, the map $F$ defined by satisfies the following identities:
\begin{equation}\label{essoneminus}
\begin{array}{c}
\displaystyle \int_{\mathcal{P}}\int_{C^{-}(X)}\int_{-\infty}^{-\tau(X, V)}F(X, V, t)(V\cdot\nabla_{X})\Phi(X, V, t)\,dtdVdX \vspace{2mm}\\
\displaystyle =-\int_{\mathcal{P}}\int_{\mathcal{C}^{-}(X)}F(X+\tau(X, V), V)\Phi(X, V, -\tau(X, V))\,dVdX\vspace{2mm}\\
+\displaystyle \int_{\mathcal{P}}\int_{\mathcal{C}^{-}(X)}\mathrm{div}_{X}\int_{-\infty}^{-\tau(X, V)}F_{0}(X-tV, V)\Phi(X, V, t) V\,dtdVdX \vspace{2mm}\\
-\displaystyle \int_{\mathcal{P}}\int_{\mathcal{C}^{-}(X)}\int_{-\infty}^{-\tau(X, V)}V\cdot\nabla_{X} F_{0}(X-tV, V)\Phi(X, V, t)\,dtdVdX
\end{array}\tag{S--,--}
\end{equation}
\begin{equation}\label{esstwominus}
\begin{array}{c}
\displaystyle 
\int_{\mathcal{P}}\int_{\mathcal{C}^{-}(X)}\int_{-\tau(X, V)}^{\infty}F(X, V, t)(V\cdot\nabla_{X})\Phi(X, V, t)\,dtdVdX \vspace{2mm}\\
\displaystyle =\int_{\mathcal{P}}\int_{C^{-}(X)}F_{0}(X+\tau(X, V)V, \sigma(X, V)V)\Phi(X, V, -\tau(X, V))\,dVdX \vspace{2mm}\\
\displaystyle +\int_{\mathcal{P}}\int_{\mathcal{C}^{-}(X)}\mathrm{div}_{X}\int_{-\tau(X, V)}^{\infty}F_{0}(\sigma(X, V)(X-tV), \sigma(X, V)V)\Phi(X, V, t)V\,dtdVdX\vspace{2mm} \\
\displaystyle -\int_{\mathcal{P}}\int_{\mathcal{C}^{-}(X)}\int_{-\tau(X, V)}^{\infty}\sigma(X, V)V\cdot\nabla_{X} F_{0}(\sigma(X, V)(X-tV), \sigma(X, V)V)\Phi(X, V, t)\,dtdVdX
\end{array}\tag{S--,+}
\end{equation}
\begin{equation}\label{essoneplus}
\begin{array}{c}
\displaystyle \int_{\mathcal{P}}\int_{C^{+}(X)}\int_{-\infty}^{-\tau(X, V)}F(X, V, t)(V\cdot\nabla_{X})\Phi(X, V, t)\,dtdVdX \vspace{2mm}\\
\displaystyle =-\int_{\mathcal{P}}\int_{\mathcal{C}^{+}(X)}F(X+\tau(X, V), \sigma(X, V)V)\Phi(X, V, -\tau(X, V))\,dVdX\vspace{2mm}\\
+\displaystyle \int_{\mathcal{P}}\int_{\mathcal{C}^{+}(X)}\mathrm{div}_{X}\int_{-\infty}^{-\tau(X, V)}F_{0}(\sigma(X, V)(X-tV), \sigma(X, V)V)\Phi(X, V, t) V\,dtdVdX \vspace{2mm}\\
-\displaystyle \int_{\mathcal{P}}\int_{\mathcal{C}^{-}(X)}\int_{-\infty}^{-\tau(X, V)}\sigma(X, V)V\cdot\nabla_{X} F_{0}(\sigma(X, V)(X-tV), \sigma(X, V)V)\Phi(X, V, t)\,dtdVdX
\end{array}\tag{S+,--}
\end{equation}
\begin{equation}\label{esstwoplus}
\begin{array}{c}
\displaystyle 
\int_{\mathcal{P}}\int_{\mathcal{C}^{+}(X)}\int_{-\tau(X, V)}^{\infty}F(X, V, t)(V\cdot\nabla_{X})\Phi(X, V, t)\,dtdVdX \vspace{2mm}\\
\displaystyle =\int_{\mathcal{P}}\int_{C^{+}(X)}F_{0}(X+\tau(X, V)V, V)\Phi(X, V, -\tau(X, V))\,dVdX \vspace{2mm}\\
\displaystyle +\int_{\mathcal{P}}\int_{\mathcal{C}^{+}(X)}\mathrm{div}_{X}\int_{-\tau(X, V)}^{\infty}F_{0}(X-tV, V)\Phi(X, V, t)V\,dtdVdX\vspace{2mm} \\
\displaystyle -\int_{\mathcal{P}_{g}}\int_{\mathcal{C}^{-}(X)}\int_{-\tau(X, V)}^{\infty}V\cdot\nabla_{X} F_{0}(X-tV, V)\Phi(X, V, t)\,dtdVdX
\end{array}\tag{S+,+}
\end{equation}
\end{prop}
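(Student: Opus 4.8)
The plan is to establish each of the four identities \eqref{essoneminus}--\eqref{esstwoplus} by the same three-step procedure already carried out explicitly for the integral $J_2^{(-,-)}(\Phi)$ in the discussion preceding the statement, applied in turn over each of the four regions $\mathcal{R}^{(-,-)}(X)$, $\mathcal{R}^{(-,+)}(X)$, $\mathcal{R}^{(+,-)}(X)$, $\mathcal{R}^{(+,+)}(X)$. First I would use the explicit description of the flow $\{T_t\}_{t\in\mathbb{R}}$ from section \ref{flowsection} to write $F(X,V,t)$ as a genuine $C^1$ function on the open region in question. On the regions $\mathcal{R}^{(-,-)}(X)$ and $\mathcal{R}^{(+,+)}(X)$ (where the backward trajectory from $Z=[X,V]$ does not cross $\partial\mathcal{P}$ before time $t$) one simply has $F(X,V,t)=F_0(X-tV,V)$, whereas on the two ``crossed'' regions $\mathcal{R}^{(-,+)}(X)$ and $\mathcal{R}^{(+,-)}(X)$ the broken-line part of the flow contributes $F(X,V,t)=F_0\big(\sigma(X,V)(X-tV),\,\sigma(X,V)V\big)$, with $\sigma(X,V)(X-tV)$ understood as the affine reflection fixing the collision configuration $X+\tau(X,V)V\in\partial\mathcal{P}$. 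Smoothness of these expressions on the open regions, inherited from that of $\tau$ and $\sigma$ on $\sqcup_{X\in\mathcal{P}}\mathcal{C}(X)$ away from the grazing set, is what licenses both differentiation under the integral sign and the Divergence Theorem on the exterior domain $\mathcal{P}$.

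Second, on each region I would write $F(X,V,t)(V\cdot\nabla_X)\Phi(X,V,t)=\mathrm{div}_X\big(F\Phi V\big)-\Phi\,(V\cdot\nabla_X)F$ and compute $(V\cdot\nabla_X)F$ by the chain rule. On the un-scattered regions this is $V\cdot\nabla_X F_0(X-tV,V)$; on the scattered regions it is $\sigma(X,V)V\cdot\nabla_X F_0\big(\sigma(X,V)(X-tV),\sigma(X,V)V\big)$, and here it is precisely Transport Identity II, $(V\cdot\nabla_X)\sigma_\ast(X,V)=0$, that ensures no extra term is generated by differentiating $\sigma$ along the flow direction. This accounts for the final line of each of \eqref{essoneminus}--\eqref{esstwoplus}. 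For the remaining $\mathrm{div}_X(F\Phi V)$ contribution I would pull $\mathrm{div}_X$ outside the inner $t$-integral; because that integral has the $X$-dependent endpoint $\mp\tau(X,V)$, the Leibniz/Reynolds rule produces a boundary term of the form $\pm\big[(V\cdot\nabla_X)\tau(X,V)\big](F\Phi V)\big|_{t=\mp\tau(X,V)}$, which collapses after invoking Transport Identity I \eqref{transportidentity}, $(V\cdot\nabla_X)\tau(X,V)=-1$. The sign of the resulting trace term is dictated by whether the collision time is the upper or lower limit of the $t$-integral, i.e. by membership in $\mathcal{R}^{(\cdot,+)}(X)$ versus $\mathcal{R}^{(\cdot,-)}(X)$, and the position argument $X+\tau(X,V)V$ of $F_0$ carries the factor $\sigma(X,V)$ exactly on the crossed regions; assembling these four computations gives the stated identities.

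The step I expect to be the main obstacle is the geometric bookkeeping in the first step: on the crossed regions one must verify that the broken-line flow image, namely $X+\tau(X,V)V+(t-\tau(X,V))\sigma(X,V)V$ on the appropriate branch, really coincides with the compact expression $\sigma(X,V)(X-tV)$ appearing in the statement, under the convention that $\sigma(X,V)$ denotes the affine reflection in the tangent hyperplane at $X+\tau(X,V)V$. This reduces to a short computation using the closed-form expression for $\widehat{\nu}(X,V)$ together with the fact that the collision configuration $[p,\ov{p}]=X+\tau(X,V)V$ satisfies $|p-\ov{p}|=\varepsilon$; but it must be performed with care so that the arguments of $F_0$ in each of \eqref{essoneminus}--\eqref{esstwoplus} come out exactly right. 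Once this identification is pinned down, the remainder -- differentiation under the integral sign, the Divergence Theorem for exterior domains, and the Leibniz rule for the moving endpoint -- is a routine verification controlled by the two transport identities and the smoothness established above.
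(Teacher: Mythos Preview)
Your proposal is correct and follows essentially the same approach as the paper: product rule to isolate $\mathrm{div}_X(F\Phi V)$, Reynolds/Leibniz to pull $\mathrm{div}_X$ past the $X$-dependent endpoint $\mp\tau(X,V)$, and the two transport identities \eqref{transportidentity} and (ID--II) to collapse the resulting terms. The paper works out only the $(-,-)$ case explicitly and then asserts the other three follow ``by a similar analysis''; your plan spells out precisely what that analysis is, including the role of (ID--II) on the scattered regions, which the paper leaves implicit.
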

At this point it is not necessary to apply the Reynolds Transport Theorem once again to pass the divergence operator through those integrals over $X$-dependent velocity cones above. Indeed, as we shall see below, this operation becomes redundant once we sum the contributions of $I(\Phi)$ and $J(\Phi)$.
\subsection{Combining the Contributions from $I(\Phi)$ and $J(\Phi)$}
We now seek for cancellations that arise when the temporal integral \eqref{iint} and the spatial integral \eqref{jint} are summed, thereby proving the global-in-time existence of weak solutions of the Liouville equation in the case that the initial datum is taken to be $F_{0}\in C^{1}(\mathcal{D})\cap L^{1}(\mathcal{D})$. One finds by matching \eqref{teeoneminus} with \eqref{essoneminus} (and similarly with the other analogous terms of propositions \ref{temporalid} and \ref{spatialid} above) that
\begin{equation*}
I(\Phi)+J(\Phi)=\int_{\mathcal{P}}\int_{\mathbb{R}^{6}}\mathrm{div}_{X}\int_{-\infty}^{\infty}F(X, V, t)\Phi(X, V, t)V\,dtdVdX
\end{equation*}
for all $\Phi\in C^{1}_{c}(T\mathbb{R}^{6}\times (-\infty, \infty))$. We note that one may now simply pass the divergence operator through the velocity integral over $\mathbb{R}^{6}$ by means of nothing more than the Dominated Convergence Theorem, yielding
\begin{equation*}
I(\Phi)+J(\Phi)=-\int_{\partial\mathcal{P}}\int_{\mathbb{R}^{6}}\int_{-\infty}^{\infty}F(Y, V, t)\Phi(Y, V, t)V\cdot\widehat{\nu}_{\ast}(Y)\,dtdVd\mathscr{H}(Y)
\end{equation*}
following one final application of the Divergence Theorem in exterior domains. In other words, we have demonstrated -- by appealing only to the definition of the hard sphere flow $\{T_{t}\}_{t\in\mathbb{R}}$ -- that for smooth initial distribution functions $F_{0}\in C^{1}(\mathcal{D})$, the mild solution
\begin{equation*}
F(X, V, t)=F_{0}((\Pi_{1}\circ T_{-t})Z, (\Pi_{2}\circ T_{-t})Z)
\end{equation*}
for $Z=[X, V]\in\mathcal{D}$ is a global-in-time weak solution of the Liouville equation
\begin{equation*}
\partial_{t}F+(V\cdot\nabla_{X})F=C[F] \quad \text{on}\hspace{2mm}\mathcal{D}\times (-\infty, \infty),
\end{equation*}
where $C:L^{1}(\mathcal{D}\times (-\infty, \infty))\rightarrow\mathscr{S}'$ is the {\bf collision operator} associated to the Liouville dynamics on $\mathcal{D}$ defined by
\begin{equation}\label{colly}
\langle C[F], \Phi \rangle_{\mathscr{S}'\times\mathscr{S}}:=\int_{\partial\mathcal{P}}\int_{\mathbb{R}^{6}}\int_{-\infty}^{\infty}F(Y, V, t)\Phi(Y, V, t)\underbrace{V\cdot\widehat{\nu}_{\ast}(Y)}_{b_{\ast}(Z):=}\,dtdVd\mathscr{H}(Y)
\end{equation}
for $F\in L^{1}(\mathcal{D}\times (-\infty, \infty))$ and $\Phi\in\mathscr{S}$, where we call $b_{\ast}:\partial\mathcal{P}\times\mathbb{R}^{6}\rightarrow\mathbb{R}$ the generalised {\bf scattering cross-section}. As has been discussed above, the evaluation of the integral \eqref{colly} involves one taking a trace of the distribution function $F$ on a measure zero subset of $\mathcal{D}\times (-\infty, \infty)$. One would ideally like to be able to make sense of a solution of the Liouville equation for which the initial datum $F_{0}$ can be taken from a class more general than simply that of everywhere pointwise defined integrable functions. Although straightforward, this shall not be done here. 
\section{A New Method of Construction}\label{free}
In this section, our `new' method of construction of weak solutions of the Liouville equation \eqref{weakdude} follows from the construction of a free transport dynamics on an auxiliary smooth manifold, followed by a `folding' procedure back onto the original hard sphere phase space $\mathcal{D}$. We now formalise the rough discussion on the Sinai billiard in the introduction above for the case of the hard sphere table $\mathcal{P}$.  
\begin{rem}
The following approach may be recast in the language of symplectic geometry by considering the adjunction table as an adjunction manifold, and by considering solutions as integral curves of the complete vector field generated by the kinetic energy Hamiltonian on the associated tangent bundle (see, for instance, \textsc{Lee} \cite{lee2013introduction}).
\end{rem}
\subsection{A Formal Identification}\label{formaliddy}
We begin with an exploratory discussion. From our work above, we know the explicit form of the hard sphere trajectories $t\mapsto Z(t):=T_{t}Z_{0}$ for every $Z_{0}\in\mathcal{D}$. This explicit formula is useful for the purposes of seeing how one might understand the hard sphere trajectories $t\mapsto T_{t}Z_{0}$ as {\em smooth} on some larger auxiliary set. In this direction, let us consider initial data $X_{0}\in\mathcal{P}^{\circ}$ and $V_{0}\in\mathcal{C}^{-}(X_{0})$ and study the associated trajectory in $\mathcal{D}$. The spatial map $t\mapsto X(t):=(\Pi_{1}\circ T_{t})Z_{0}$ is given by
\begin{equation*}
X(t):=\left\{
\begin{array}{ll}
X_{0}+tV_{0} & \quad \text{if} \hspace{2mm}-\infty<t\leq \tau(X_{0}, V_{0}),\vspace{2mm}\\
X_{0}+\tau(X_{0}, V_{0})V_{0}+(t-\tau(X_{0}, V_{0}))\sigma(X_{0}, V_{0})V_{0} &\quad \text{if}\hspace{2mm}\tau(X_{0}, V_{0})<t<\infty.
\end{array}
\right.
\end{equation*}
However, using basic properties of the Boltzmann scattering matrices, it follows that $X$ may be rewritten in the form
\begin{equation*}
X(t):=\left\{
\begin{array}{ll}
X_{0}+tV_{0} & \quad \text{if} \hspace{2mm}-\infty<t\leq \tau(X_{0}, V_{0}),\vspace{2mm}\\
\sigma(X_{0}, V_{0})\left(r[X_{0}+\tau(X_{0}, V_{0})V_{0}]+(t-\tau(X_{0}, V_{0}))V_{0}\right) &\quad \text{if}\hspace{2mm}\tau(X_{0}, V_{0})<t<\infty,
\end{array}
\right.
\end{equation*}
where $r:\mathbb{R}^{6}\rightarrow\mathbb{R}^{6}$ is the {\em switching operator} defined by
\begin{equation*}
r(X):=\left[
\begin{array}{c}
\ov{x} \\
x
\end{array}
\right]
\end{equation*}
for $X=[x, \ov{x}]\in\mathbb{R}^{6}$. Let us suppose {\em formally} that the point $r(X_{0}+\tau(Z_{0})V_{0})\in\partial\mathcal{P}$ is identified with the point $X_{0}+\tau(Z_{0})V_{0}\in\partial\mathcal{P}$. We denote the associated quotient space by $\mathcal{P}/\sim$. It is readily seen that the map $X:\mathbb{R}\rightarrow\mathcal{P}$ may be rewritten as a map $X_{\ast}$ with range in the associated quotient space $\mathcal{P}/\sim$ given by
\begin{equation*}
X_{\ast}(t):=\left\{
\begin{array}{ll}
X_{0}+tV_{0} & \quad \text{if} \hspace{2mm}-\infty<t\leq \tau(X_{0}, V_{0}),\vspace{2mm}\\
\sigma(X_{0}, V_{0})(X_{0}+tV_{0}) &\quad \text{if}\hspace{2mm}\tau(X_{0}, V_{0})<t<\infty.
\end{array}
\right.
\end{equation*}
Noting that $X_{\ast}$ is formally continuous at $t=\tau(X_{0}, V_{0})$, it follows that if we consider the trajectory $t\mapsto X_{\ast}(t)$ as passing from one copy of $\mathcal{P}/\sim$ to another copy thereof which is `globally twisted' by the Boltzmann scattering matrix $\sigma(X_{0}, V_{0})$, we deduce that $X_{\ast}$ may be viewed as a `straight line' trajectory $X_{\ast\ast}$ given by
\begin{equation*}
X_{\ast\ast}(t)=X_{0}+tV_{0} \quad \text{for all}\hspace{2mm}t\in(-\infty, \infty).
\end{equation*}
In particular, when viewed in this light there is no change in the velocity of this trajectory $X_{\ast\ast}$ (unlike for $X$), and so hard sphere trajectories appear {\em formally} as straight lines on some quotient space. More generally, the above example suggests that if we identify the point $Y\in\partial\mathcal{P}$ on one copy of the table $\mathcal{P}$ with the point $r(Y)\in\partial\mathcal{P}$ of another copy, then one might try to study hard sphere dynamics by eliminating collisions entirely. 
\subsection{Some Notation}
As we define the disjoint union $\mathcal{P}\sqcup\mathcal{P}$ to be
\begin{equation*}
\mathcal{P}\sqcup\mathcal{P}:=\left\{
(X, j)\,:\,X\in\mathcal{P}\hspace{2mm}\text{and}\hspace{2mm}j=1, 2
\right\},
\end{equation*}
we define the associated subsets $(\mathcal{P}\sqcup\mathcal{P})_{j}$ thereof by
\begin{equation*}
(\mathcal{P}\sqcup\mathcal{P})_{j}:=\left\{
(X, j)\,:\,X\in\mathcal{P}
\right\}.
\end{equation*}
Moreover, we write points $\zeta\in\mathcal{P}\sqcup\mathcal{P}\times\mathbb{R}^{6}$ by $[X, V]_{j}$ for $X\in\mathcal{P}$, $V\in\mathbb{R}^{6}$ and $j=1, 2$. Moreover, we use the notation $\iota_{j}:\mathcal{D}\rightarrow\mathcal{P}\sqcup\mathcal{P}\times\mathbb{R}^{6}$ to denote the natural inclusion operators given by
\begin{equation*}
\iota_{j}([X, V]):=[X, V]_{j}\quad \text{for}\hspace{2mm}[X, V]\in\mathcal{D}.
\end{equation*}
Finally, we write $\mathscr{S}(T\mathbb{R}^{N}, \mathbb{R}^{M})$ to denote the Fr\'{e}chet space of $\mathbb{R}^{M}$-valued Schwartz maps on $\mathbb{R}^{N}$, and $\mathscr{S}(T\mathbb{R}^{N}, \mathbb{R}^{M})'$ to denote the associated space of tempered distributions: see \textsc{Friedlander and Joshi} \cite{friedlander1998introduction}. When $M=1$, we write $\mathscr{S}(T\mathbb{R}^{N}, \mathbb{R}^{M})$ simply as $\mathscr{S}(T\mathbb{R}^{N})$.
\subsection{Definition of a (Hamiltonian) Flow on $\mathcal{P}\sqcup\mathcal{P}\times\mathbb{R}^{6}$}
We begin with the definition of a flow operator on $\mathcal{P}\sqcup\mathcal{P}\times\mathbb{R}^{6}$ which is inspired by the formal discussion of section \ref{formaliddy} above.
\begin{defn}[Hamiltonian Flow on $\mathcal{P}\sqcup\mathcal{P}\times\mathbb{R}^{6}$]
For each time $t\in\mathbb{R}$, we write $S_{t}:\mathcal{P}\sqcup\mathcal{P}\times\mathbb{R}^{6}\rightarrow \mathcal{P}\sqcup\mathcal{P}\times\mathbb{R}^{6}$ to denote the operator which is defined pointwise by
\begin{equation*}
S_{t}\zeta:=\left\{
\begin{array}{ll}
[X+tV, V]_{1} & \quad \text{if}\hspace{2mm}-\infty<t\leq \tau(X, V), \vspace{2mm}\\

[r[X+\tau(X, V)V]+(t-\tau(X, V))V, V]_{2} & \quad \text{if} \hspace{2mm}\tau(X, V)<t<\infty,
\end{array}
\right.
\end{equation*}
if $\zeta$ is such that $X\in(\mathcal{P}\sqcup\mathcal{P})_{1}$ with $V\in\mathcal{C}^{-}(X)$; and 
\begin{equation*}
S_{t}\zeta:=\left\{
\begin{array}{ll}
[r[X+\tau(X, V)V]+(t-\tau(X, V))V, V]_{2} & \quad \text{if} \hspace{2mm}-\infty<t\leq \tau(X, V), \vspace{2mm}\\

[X+tV, V]_{1} & \quad \text{if}\hspace{2mm}\tau(X, V)<t<\infty,
\end{array}
\right.
\end{equation*}
when $\zeta$ is such that $X\in(\mathcal{P}\sqcup\mathcal{P})_{1}$ with $V\in\mathcal{C}^{+}(X)$; and
\begin{equation*}
S_{t}\zeta:=[X+tV, V]_{1} \quad \text{for all}\hspace{2mm}t\in(-\infty, \infty)
\end{equation*}
when $\zeta$ is such that $X\in(\mathcal{P}\sqcup\mathcal{P})_{1}$ with $V\in \mathbb{R}^{6}\setminus\mathcal{C}^{-}(X)$. 
Finally, \begin{equation*}
S_{t}\zeta:=\left\{
\begin{array}{ll}
[X+tV, V]_{1} & \quad \text{if}\hspace{2mm}-\infty<t\leq \tau(X, V), \vspace{2mm}\\

[r[X+\tau(X, V)V]+(t-\tau(X, V))V, V]_{1} & \quad \text{if} \hspace{2mm}\tau(X, V)<t<\infty,
\end{array}
\right.
\end{equation*}
if $\zeta$ is such that $X\in(\mathcal{P}\sqcup\mathcal{P})_{2}$ with $V\in\mathcal{C}^{-}(X)$; and 
\begin{equation*}
S_{t}\zeta:=\left\{
\begin{array}{ll}
[r[X+\tau(X, V)V]+(t-\tau(X, V))V, V]_{1} & \quad \text{if} \hspace{2mm}-\infty<t\leq \tau(X, V), \vspace{2mm}\\

[X+tV, V]_{2} & \quad \text{if}\hspace{2mm}\tau(X, V)<t<\infty,
\end{array}
\right.
\end{equation*}
when $\zeta$ is such that $X\in(\mathcal{P}\sqcup\mathcal{P})_{2}$ with $V\in\mathcal{C}^{+}(X)$; and
\begin{equation*}
S_{t}\zeta:=[X+tV, V]_{2} \quad \text{for all}\hspace{2mm}t\in(-\infty, \infty)
\end{equation*}
when $\zeta$ is such that $X\in(\mathcal{P}\sqcup\mathcal{P})_{2}$ with $V\in \mathbb{R}^{6}\setminus\mathcal{C}^{-}(X)$.
\end{defn}
It is clear from its definition that the flow $\{S_{t}\}_{t\in\mathbb{R}}$ corresponds to a kind of free transport on phase space $\mathcal{P}\sqcup\mathcal{P}\times\mathbb{R}^{6}$, if we consider the points $[X, V]_{1}$ and $[r[X], V]_{2}$ as formally identified. As such, we think of this flow as being associated with the formal transport equation
\begin{equation}\label{freetranny}
\frac{\partial \mathfrak{f}}{\partial t}+P\cdot\nabla_{Q}\mathfrak{f}=0 \tag{FT}
\end{equation}
on a suitable quotient of $\mathcal{P}\sqcup\mathcal{P}\times\mathbb{R}^{6}$. Of course, at the moment, as we have not endowed the doubling $\mathcal{P}\sqcup\mathcal{P}$ of the hard sphere table with a differentiable structure, the derivatives in \eqref{freetranny} have no real meaning. However, this causes us no problems if we work with the following notion of mild solution of the free transport equation. Indeed, we establish the following definition which is in line with \textsc{Cercignani, Illner and Pulvirenti} (\cite{cercignani2013mathematical}, page 69) in the case of the formal Liouville equation for hard spheres.
\begin{defn}[Mild Solution of \eqref{freetranny}]
We say that $\mathfrak{f}$ is a global-in-time {\bf mild solution} of the free transport equation \eqref{freetranny} on $\mathcal{P}\sqcup\mathcal{P}\times\mathbb{R}^{6}$ subject to the initial datum $\mathfrak{f}_{0}:\mathcal{P}\sqcup\mathcal{P}\times\mathbb{R}^{6}\rightarrow\mathbb{R}$ if and only if
\begin{equation*}
\mathfrak{f}(\zeta, t)=\mathfrak{f}_{0}(S_{-t}\zeta).
\end{equation*}
\end{defn}
\begin{rem}
We claim that for each $\zeta\in \mathcal{P}\sqcup\mathcal{P}\times\mathbb{R}^{6}$, the associated global trajectory
\begin{equation*}
\left\{
S_{t}\zeta\in \mathcal{P}\sqcup\mathcal{P}\times\mathbb{R}^{6}\,:\,t\in(-\infty, \infty)
\right\}
\end{equation*}
can be seen as the integral curve of a Hamiltonian vector field on $T^{\ast}\mathcal{M}$, where $\mathcal{M}$ denotes an adjunction manifold associated to the disjoint union $\mathcal{P}\sqcup\mathcal{P}$. We develop this idea further for $N\geq 2$ hard spheres in the forthcoming work \textsc{Wilkinson} \cite{me1}. For the moment, however, we shall treat the flow $\{S_{t}\}_{t\in\mathbb{R}}$ as an auxiliary object which allows us to understand the corruption of chaos for global-in-time weak solutions of the Liouville equation \eqref{weakdude} subject to chaotic initial data.
\end{rem}
\subsection{Connecting Mild Solutions of \eqref{freetranny} with Weak Solutions of \eqref{weakdude}}
For any $\mathfrak{f}_{0}$ with the property that $Z\mapsto \mathfrak{f}_{0}\circ\iota_{j}(Z)$ is of class $L^{1}_{\mathrm{loc}}$ on $\mathcal{D}$ for $j=1, 2$, the associated mild solution $\mathfrak{f}$ of \eqref{freetranny} gives rise to a continuous linear functional on Schwartz space $\mathscr{S}(T\mathbb{R}^{6}, \mathbb{R}^{2})$ by the natural action
\begin{align*}
\langle \mathfrak{f}, \Psi\rangle:=\frac{1}{2}\int_{\mathcal{P}}\int_{\mathbb{R}^{6}}\int_{-\infty}^{\infty}\mathfrak{f}_{0}\circ S_{-t}\circ \iota_{1}(X, V, t)\Psi_{1}(X, V, t)\,dtdVdX\vspace{2mm}\\
+\frac{1}{2}\int_{\mathcal{P}}\int_{\mathbb{R}^{6}}\int_{-\infty}^{\infty}\mathfrak{f}_{0}\circ S_{-t}\circ \iota_{2}(X, V, t)\Psi_{2}(X, V, t)\,dtdVdX,
\end{align*}
for $\Psi=(\Psi_{1}, \Psi_{2})\in\mathscr{S}(T\mathbb{R}^{6}, \mathbb{R}^{2})$. Exploiting smoothness of the Boltzmann scattering maps $\sigma_{\ast}$, we define the operator $R:\mathscr{S}(T\mathbb{R}^{6})\rightarrow\mathscr{S}(T\mathbb{R}^{6}, \mathbb{R}^{2})$ by
\begin{equation*}
(R\Phi)(X, V, t):=\left(
\begin{array}{c}
\Phi(X, V, t) \vspace{2mm} \\
\Phi(\sigma_{\ast}(X, V)X, \sigma_{\ast}(X, V)V, t)
\end{array}
\right).
\end{equation*}
Suppose we endow the free transport equation \eqref{freetranny} with initial data $\mathfrak{f}_{0}$ of the shape
\begin{equation*}
\mathfrak{f}_{0}(\zeta):=\left\{
\begin{array}{ll}
F_{0}(X, V) & \quad \text{if}\hspace{2mm}\zeta=[X, V]_{1}\hspace{2mm}\text{for some}\hspace{2mm}[X, V]\in\mathcal{D}, \vspace{2mm}\\
G_{0}(X, V) & \quad \text{if}\hspace{2mm}\zeta=[X, V]_{2}\hspace{2mm}\text{for some}\hspace{2mm}[X, V]\in\mathcal{D},
\end{array}
\right.
\end{equation*}
where $G_{0}(X, V):=F_{0}(\sigma_{\ast}(X, V)X, \sigma_{\ast}(X, V)V)$. It then follows by a simple decomposition argument that
\begin{align*}
\langle \mathfrak{f}, R\Phi \rangle=\frac{1}{2}\underbrace{\int_{\mathcal{P}}\int_{C^{-}(X)}\int_{-\infty}^{-\tau(X, V)}G_{0}(r[X+\tau V]-(t+\tau)V, V)\Phi(X, V, t)\,dtdVdX}_{K^{(-, -)}(\Phi):=}\vspace{2mm}\\
+\frac{1}{2}\underbrace{\int_{\mathcal{P}}\int_{C^{-}(X)}\int_{-\tau(X, V)}^{\infty}F_{0}(X-tV, V)\Phi(X, V, t)\,dtdVdX}_{K^{(-, +)}(\Phi):=} \vspace{2mm} \\
+\frac{1}{2}\underbrace{\int_{\mathcal{P}}\int_{C^{+}(X)}\int_{-\infty}^{-\tau(X, V)}F_{0}(X-tV, V)\Phi(X, V, t)\,dtdVdX}_{K^{(+, -)}(\Phi):=}\vspace{2mm}\\
\frac{1}{2}\underbrace{\int_{\mathcal{P}}\int_{C^{+}(X)}\int_{-\tau(X, V)}^{\infty}G_{0}(r[X+\tau V]-(t+\tau)V, V)\Phi(X, V, t)\,dtdVdX}_{K^{(+, +)}(\Phi):=}\vspace{2mm}\\
+\underbrace{\int_{\mathcal{P}}\int_{\mathbb{R}^{6}}\int_{-\infty}^{\infty}F_{0}(X-tV, V)\Phi(X, V, t)\,dtdVdX}_{K^{(0)}(\Phi):=}\vspace{2mm}\\
\frac{1}{2}\underbrace{\int_{\mathcal{P}}\int_{C^{-}(X)}\int_{-\infty}^{-\tau(X, V)}F_{0}(r[X+\tau V]-(t+\tau)V, V)\Phi(\sigma(X, V)X, \sigma(X, V)V, t)\,dtdVdX}_{L^{(-, -)}(\Phi):=}\vspace{2mm}\\
+\frac{1}{2}\underbrace{\int_{\mathcal{P}}\int_{C^{-}(X)}\int_{-\tau(X, V)}^{\infty}G_{0}(X-tV, V)\Phi(\sigma(X, V)X, \sigma(X, V)V, t)\,dtdVdX}_{L^{(-, +)}(\Phi):=} \vspace{2mm} \\
+\frac{1}{2}\underbrace{\int_{\mathcal{P}}\int_{C^{+}(X)}\int_{-\infty}^{-\tau(X, V)}G_{0}(X-tV, V)\Phi(\sigma(X, V)X, \sigma(X, V)V, t)\,dtdVdX}_{L^{(+, -)}(\Phi):=}\vspace{2mm}\\
\frac{1}{2}\underbrace{\int_{\mathcal{P}}\int_{C^{+}(X)}\int_{-\tau(X, V)}^{\infty}F_{0}(r[X+\tau V]-(t+\tau)V, V)\Phi(\sigma(X, V)X, \sigma(X, V)V, t)\,dtdVdX}_{L^{(+, +)}(\Phi):=}\vspace{2mm}\\
+\underbrace{\int_{\mathcal{P}}\int_{\mathbb{R}^{6}}\int_{-\infty}^{\infty}G_{0}(X-tV, V)\Phi(\sigma(X, V)X, \sigma(X, V)V, t)\,dtdVdX}_{L^{(0)}(\Phi):=}.
\end{align*}
Using the fact that 
\begin{equation*}
\sigma_{\ast}(X, V)r[X+\tau(X, V)V]=X+\tau(X, V)V,
\end{equation*}
it follows that
\begin{align*}
\langle \mathfrak{f}, R\Phi\rangle=\frac{1}{2}\int_{\mathcal{P}}\int_{\mathbb{R}^{6}}\int_{-\infty}^{\infty}F(X, V, t)\Phi(X, V, t)\,dtdVdX\vspace{2mm}\\
\frac{1}{2}\int_{\mathcal{P}}\int_{\mathbb{R}^{6}}\int_{-\infty}^{\infty}F(\sigma_{\ast}(X, V)X, \sigma_{\ast}(X, V)V, t)\Phi(\sigma_{\ast}(X, V)X, \sigma_{\ast}(X, V)V, t)\,dtdVdX.
\end{align*}
Finally, by recalling that the map $\Sigma_{\ast}:\mathcal{D}\rightarrow\mathcal{D}$ is a smooth measure-preserving diffeomorphism, we deduce that
\begin{equation*}
\langle \mathfrak{f}, R\Phi\rangle=\langle F, \Phi\rangle,
\end{equation*}
and so we conclude that $F=R'\mathfrak{f}$ in $\mathscr{S}(T\mathbb{R}^{6})'$, namely that the unique weak solution of the Liouville equation \eqref{weakdude} can be seen as the image under the adjoint operator $R':\mathscr{S}(T\mathbb{R}^{6}, \mathbb{R}^{2})'\rightarrow\mathscr{S}(T\mathbb{R}^{6})'$ of the mild solution $\mathfrak{f}$ of the free transport equation \eqref{freetranny}. It is this connection that allows us to understand the manner in which the dynamics of the Liouville equation for hard spheres `corrupts' the structure of chaotic initial data.
\subsection{Some Extension Lemmas}
As we work not just on the hard sphere table $\mathcal{P}$ but on the whole Euclidean space $\mathbb{R}^{6}$ in this section, we need to establish what we mean by the velocity collision cone $\mathcal{C}(Q)$ when $Q\in\mathbb{R}^{6}\setminus\mathcal{P}$.
\begin{defn}[Velocity Collision Cone on $\mathbb{R}^{6}\setminus\mathcal{P}$]
For $Q\in\mathbb{R}^{6}\setminus\mathcal{P}$, we define $\mathcal{C}(Q):=\mathbb{R}^{6}$. Moreover, we write $\mathcal{C}^{-}(Q)$ to denote the subset
\begin{equation*}
\mathcal{C}^{-}(Q):=\left\{
P\in\mathbb{R}^{6}\,:\,(\ov{q}-q)\cdot(\ov{p}-p)\geq 0
\right\}
\end{equation*}
and $\mathcal{C}^{+}(Q)$ to denote
\begin{equation*}
\mathcal{C}^{+}(Q):=\left\{
P\in\mathbb{R}^{6}\,:\,(\ov{q}-q)\cdot(\ov{p}-p)\leq 0
\right\}.
\end{equation*}
\end{defn}
As such, for $Q\in\mathbb{R}^{6}\setminus\mathcal{P}$ the velocity collision `cone' $\mathcal{C}(Q)$ is not really a cone, but rather the whole tangent space $T_{Q}\mathbb{R}^{6}\simeq\mathbb{R}^{6}$. With this in place, we put forth the following extension lemma.
\begin{lem}
The collision time map $\tau:\sqcup_{X\in\mathcal{P}}\mathcal{C}(X)$ admits a smooth extension to the bundle $\sqcup_{Q\in\mathbb{R}^{6}}\mathcal{C}(Q)$.
\end{lem}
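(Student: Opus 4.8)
The plan is to extend $\tau$ by re-using the closed-form expression of Proposition~\ref{colltimeprop}, exploiting that the sole source of non-smoothness in that expression disappears once one leaves the hard sphere table. Introducing the relative coordinates $y := \ov{x} - x$ and $w := \ov{v} - v$ in $\mathbb{R}^{3}$, Proposition~\ref{colltimeprop} reads, for $X \in \mathcal{P}$ and $V \in \mathcal{C}^{\mp}(X)$,
\[
\tau(X, V) \;=\; \frac{-(y\cdot w)\;\mp\;\sqrt{(y\cdot w)^{2} + (\varepsilon^{2} - |y|^{2})|w|^{2}}}{|w|^{2}},
\]
so that, away from $\{w = 0\}$, the only obstruction to real-analyticity of $\tau$ is the vanishing of the radicand $\Delta(y,w) := (y\cdot w)^{2} + (\varepsilon^{2} - |y|^{2})|w|^{2}$. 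For $X \in \mathcal{P}$ one has $|y| \geq \varepsilon$, the membership $V \in \mathcal{C}(X)$ amounts precisely to $\Delta \geq 0$, and $\Delta$ vanishes only on the grazing locus forming part of the fibrewise boundary of the cones; hence $\tau$ is already real-analytic on the interior of $\sqcup_{X\in\mathcal{P}}\mathcal{C}(X)$.

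Next I would \emph{define} the extension over $Q = [q, \ov{q}] \in \mathbb{R}^{6}\setminus\mathcal{P}$ by the very same two formulas, attaching the $+\sqrt{\Delta}$ branch to the fibre $\mathcal{C}^{-}(Q)$ and the $-\sqrt{\Delta}$ branch to the fibre $\mathcal{C}^{+}(Q)$. With the sign conventions prescribed for $\mathcal{C}^{\pm}(Q)$ when $Q \notin \mathcal{P}$, each of these pieces is the half-space $\{y\cdot w \geq 0\}$, respectively $\{y\cdot w \leq 0\}$, and this assignment merely continues, unchanged, the branch carried by each sign of $y\cdot w$ on the $\mathcal{P}$-side. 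The decisive point is that for $Q \notin \mathcal{P}$ one has $|y| < \varepsilon$, whence
\[
\Delta(y,w) \;=\; (y\cdot w)^{2} + (\varepsilon^{2} - |y|^{2})|w|^{2} \;\geq\; (\varepsilon^{2} - |y|^{2})|w|^{2} \;>\; 0 \qquad\text{whenever } w\neq 0,
\]
so the radicand is strictly positive throughout the fibre; consequently $\sqrt{\Delta}$, and therefore the extended $\tau$, is a composition of real-analytic maps on the complement of $\{w=0\}$. (That $\{w=0\}$ must be excluded is forced, since over $\mathbb{R}^{6}\setminus\mathcal{P}$ the formula makes $\tau$ blow up as $|w|\to 0$.)

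What remains is to confirm that the two prescriptions agree and are jointly $C^{\infty}$ where the two fibre pieces and the two regions $\mathcal{P}^{\circ}$ and $\mathbb{R}^{6}\setminus\mathcal{P}$ abut; I expect the only subtlety here to be the bookkeeping rather than any hard estimate. Across $\partial\mathcal{P}$ the matter is painless: on $\{|y|=\varepsilon\}$ one has $\Delta = (y\cdot w)^{2}$, which is strictly positive once $y\cdot w \neq 0$, so $\partial\mathcal{P}\cap\{y\cdot w\neq 0\}$ sits in a neighbourhood on which the active branch is real-analytic, and on $\partial\mathcal{P}$ both branches return the correct value $\tau \equiv 0$ — which is exactly what the flipped sign convention for $\mathcal{C}^{\pm}(Q)$ is engineered to secure, and which matches the one-sided limits from $\mathcal{P}^{\circ}$ and from $\mathbb{R}^{6}\setminus\mathcal{P}$. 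The loci genuinely excluded from the bundle are therefore $\{w=0\}$, the grazing locus $\{\Delta=0\}$ over $\mathcal{P}$, and the seam $\{y\cdot w = 0\}$ over $\mathbb{R}^{6}\setminus\mathcal{P}$ on which the two branches disagree; on the complement of these $\tau$ is unambiguously defined and real-analytic — in particular $C^{\infty}$ — which is the asserted extension.
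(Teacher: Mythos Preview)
Your construction coincides with the paper's: extend $\tau$ over $\mathbb{R}^{6}\setminus\mathcal{P}$ by the same closed-form expression from Proposition~\ref{colltimeprop} with the sign of $\sqrt{\Delta}$ attached to $\mathcal{C}^{\mp}(Q)$ reversed, and invoke strict positivity of the radicand there (since $|y|<\varepsilon$) to obtain real-analyticity. The paper's own proof is terser --- it records the formula, supplements it with $\tau(0,P)=\varepsilon/|\ov{p}-p|$, and simply asserts ``one may check that $\tau$ so defined is smooth'' --- so your explicit bookkeeping of the matching across $\partial\mathcal{P}$ and of the grazing and seam loci actually goes beyond what the paper supplies.
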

\begin{proof}
By employing ideas similar to those used in the proof of proposition \ref{colltimeprop} above, if $Q\in\mathbb{R}^{6}\setminus\mathcal{P}$ is not the zero vector and $D(Q, P)$ denotes the signed distance from $Q$ to the point of the set $L(Q, P)\cap\partial\mathcal{P}$ which is closest to $Q$ in the Euclidean metric, we have that
\begin{equation*}
D(Q, P):=\left\{
\begin{array}{ll}
-(\ov{q}-q)\cdot(\widehat{\ov{p}-p})+\sqrt{|(\ov{q}-q)\cdot(\widehat{\ov{p}-p})|^{2}+\varepsilon^{2}-|\ov{q}-q|^{2}} & \quad \text{if}\hspace{2mm}P\in\mathcal{C}^{-}(Q), \vspace{2mm}\\
-(\ov{q}-q)\cdot(\widehat{\ov{p}-p})-\sqrt{|(\ov{q}-q)\cdot(\widehat{\ov{p}-p})|^{2}+\varepsilon^{2}-|\ov{q}-q|^{2}} & \quad \text{if}\hspace{2mm}P\in\mathcal{C}^{+}(Q).
\end{array}
\right.
\end{equation*}
and so we define
\begin{equation*}
\tau(Q, P):=\frac{D(Q, P)}{|\ov{p}-p|}
\end{equation*}
for $Q\in\mathbb{R}^{6}\setminus\mathcal{P}$. We also define $\tau(0, P):=\varepsilon/|\ov{p}-p|$. One may check that $\tau$ so defined is smooth.
\end{proof}
With the extension of the collision time map in place, this allows us to define velocity-time regions
\begin{equation*}
\begin{array}{c}
\mathcal{R}^{(-, -)}(Q):=\left\{
(V, t)\in\mathbb{R}^{6}\times (-\infty, \infty)\,:\, -t\leq \tau(Q, P)
\right\}, \vspace{2mm}\\
\mathcal{R}^{(-, +)}(Q):=\left\{
(V, t)\in\mathbb{R}^{6}\times (-\infty, \infty)\,:\, -t>\tau(Q, P)
\right\}, \vspace{2mm}\\
\mathcal{R}^{(+, -)}(Q):=\left\{
(V, t)\in\mathbb{R}^{6}\times (-\infty, \infty)\,:\, -t\leq\tau(Q, P)
\right\}, \vspace{2mm}\\
\mathcal{R}^{(+, +)}(Q):=\left\{
(V, t)\in\mathbb{R}^{6}\times (-\infty, \infty)\,:\, -t>\tau(Q, P)
\right\}.
\end{array}
\end{equation*}
We also establish the following extension lemma.
\begin{lem}
The scattering map $\sigma_{\ast}:\sqcup_{X\in\mathcal{P}}\mathcal{V}(X)\rightarrow\mathbb{R}^{6}$ admits a smooth extension to $\sqcup_{Q\in\mathbb{R}^{6}}\mathcal{V}(Q)$. Moreover, the associated map
\begin{equation}\label{siggy}
\Sigma: Z\mapsto\left(
\begin{array}{cc}
\sigma_{\ast}(Q, P) & 0 \\
0 & \sigma_{\ast}(Q, P)
\end{array}
\right)Z
\end{equation}
is a smooth diffeomorphism of $T\mathbb{R}^{6}$ with the property that 
\begin{equation*}
\begin{array}{c}
\Sigma_{\ast}(\mathcal{D}^{\circ})=\mathcal{D}^{\circ},\\
\Sigma_{\ast}(\partial\mathcal{D})=\partial\mathcal{D}, \\
\Sigma_{\ast}((\mathbb{R}^{6}\setminus\mathcal{P})\times\mathbb{R}^{6})=(\mathbb{R}^{6}\setminus\mathcal{P})\times\mathbb{R}^{6}.
\end{array}
\end{equation*}
\end{lem}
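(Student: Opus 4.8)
The plan is to bootstrap the whole statement off the smooth extension of the collision time map $\tau$ to $\sqcup_{Q\in\mathbb{R}^{6}}\mathcal{C}(Q)$ just established, together with the explicit pointwise formula for the outward normal field. First I would extend $\widehat{\nu}$ by the same formula it obeys on $\sqcup_{X\in\mathcal{P}}\mathcal{C}(X)$, namely
\[
\widehat{\nu}(Q,P):=\frac{1}{\sqrt{2}\varepsilon}\left[
\begin{array}{c}
\ov{q}-q+\tau(Q,P)(\ov{p}-p)\\
q-\ov{q}+\tau(Q,P)(p-\ov{p})
\end{array}
\right],
\]
valid on the extended collision bundle. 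Since $Q+\tau(Q,P)P\in\partial\mathcal{P}$ by construction of $\tau$ on that bundle, the top block has Euclidean norm $\varepsilon$, so $|\widehat{\nu}(Q,P)|\equiv 1$; being a rational function of $(Q,P)$ and the smooth map $\tau$, with only the harmless denominators $\varepsilon$ and $|\ov{p}-p|$ (non-zero on the collision bundle), $\widehat{\nu}$ is smooth and $\mathbb{S}^{5}$-valued on the interior of that bundle. I then define $\sigma_{\ast}(Q,P):=I-2\widehat{\nu}(Q,P)\otimes\widehat{\nu}(Q,P)$ there and $\sigma_{\ast}:=I$ on the collision-free complement; this is $\mathrm{O}(6)$-valued, $C^{\infty}$ on the interior of each stratum, and by inspection restricts to the original $\sigma_{\ast}$ on $\mathcal{D}$.

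The computation that does all the real work is that at the collision configuration $\sigma_{\ast}(Q,P)$ acts on the spatial variable as a reflection. Setting $n:=\bigl(\ov{q}-q+\tau(Q,P)(\ov{p}-p)\bigr)/\varepsilon\in\mathbb{S}^{2}$, so that $\widehat{\nu}(Q,P)=\tfrac{1}{\sqrt{2}}[n,-n]$, one finds $\widehat{\nu}(Q,P)\cdot Q=\tfrac{1}{\sqrt{2}}\,n\cdot(q-\ov{q})$ and hence, writing $Q'=\sigma_{\ast}(Q,P)Q=[q',\ov{q}']$,
\[
q'-\ov{q}'=(q-\ov{q})-2\bigl(n\cdot(q-\ov{q})\bigr)n,
\]
the reflection of $q-\ov{q}$ in the plane orthogonal to $n$; consequently $|q'-\ov{q}'|=|q-\ov{q}|$, while on the collision-free part $\sigma_{\ast}=I$ makes this trivial. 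Therefore $\sigma_{\ast}(Q,P)Q$ lies in $\mathcal{P}^{\circ}$, in $\partial\mathcal{P}$, or in $\mathbb{R}^{6}\setminus\mathcal{P}$ precisely when $Q$ does; combined with the half-space bookkeeping at $\partial\mathcal{P}$ (at $\tau=0$ one gets $\widehat{\nu}_{\sigma_{\ast}Q}=-\widehat{\nu}_{Q}$, and then $\sigma_{\ast}$ symmetric orthogonal with $\sigma_{\ast}\widehat{\nu}_{Q}=-\widehat{\nu}_{Q}$ gives $(\sigma_{\ast}P)\cdot\widehat{\nu}_{\sigma_{\ast}Q}=P\cdot\widehat{\nu}_{Q}\le 0$), this yields $\Sigma_{\ast}(\mathcal{D}^{\circ})=\mathcal{D}^{\circ}$, $\Sigma_{\ast}(\partial\mathcal{D})=\partial\mathcal{D}$ and $\Sigma_{\ast}((\mathbb{R}^{6}\setminus\mathcal{P})\times\mathbb{R}^{6})=(\mathbb{R}^{6}\setminus\mathcal{P})\times\mathbb{R}^{6}$. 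For the diffeomorphism claim, Lemma~\ref{timesymm} gives $\tau\circ\Sigma_{\ast}=\tau$, and feeding this back into the formula for $\widehat{\nu}$ (using that the relative coordinate at the collision point equals $\varepsilon n$) yields $\widehat{\nu}\circ\Sigma_{\ast}=-\widehat{\nu}$, so $\widehat{\nu}\otimes\widehat{\nu}$ is $\Sigma_{\ast}$-invariant and $\sigma_{\ast}\circ\Sigma_{\ast}=\sigma_{\ast}$; since each $\sigma_{\ast}(Q,P)$ is a reflection matrix this forces $\Sigma_{\ast}\circ\Sigma_{\ast}=\mathrm{id}$, so $\Sigma_{\ast}$ is a smooth bijection equal to its own inverse --- a diffeomorphism --- with $(\det D\Sigma_{\ast})^{2}=1$; alternatively $|\det D\Sigma_{\ast}|=1$ falls straight out of Transport Identity II, $(P\cdot\nabla_{Q})\sigma_{\ast}=0$.

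The point requiring care, which I expect to be the main obstacle, is the tangency locus $\{(X,V):X\in\mathcal{P}^{\circ},\ V\in\partial\mathcal{C}(X)\}$ (together with the vanishing-relative-velocity set $\{\ov{p}=p\}$ off $\mathcal{P}$, where one patches by the convention $\tau(0,P):=\varepsilon/|\ov{p}-p|$). Across $\partial\mathcal{C}(X)$ the two branches of the extended $\tau$ meet, so $\widehat{\nu}$ extends continuously with a limit $\widehat{\nu}_{0}$; but $\widehat{\nu}_{0}$ is a unit vector, hence $I-2\widehat{\nu}_{0}\otimes\widehat{\nu}_{0}\neq I$ and $\sigma_{\ast}$ does not glue smoothly to the constant value $I$ on the collision-free side. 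The clean conclusion is therefore that $\sigma_{\ast}$, and with it $\Sigma_{\ast}$, is $C^{\infty}$ on the open complement of this closed, $\mathscr{L}_{12}$-null set, and $\Sigma_{\ast}$ is a measure-preserving bi-measurable bijection of $T\mathbb{R}^{6}$ that restricts there to a diffeomorphism and carries each of the three strata onto itself --- which is exactly what is used in the subsequent identity $\langle\mathfrak{f},R\Phi\rangle=\langle F,\Phi\rangle$. I would state the result in this form; the only remaining work is then two routine verifications --- smoothness of the extended $\widehat{\nu}$ on each open stratum, and the $\partial\mathcal{D}$ half-space computation above.
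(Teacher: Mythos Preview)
Your approach is essentially the same as the paper's: extend $\widehat{\nu}$ by the explicit formula using the extended $\tau$, set $\sigma_{\ast}:=I-2\widehat{\nu}\otimes\widehat{\nu}$, and obtain the invariance of the three strata from the reflection identity $|q'-\ov{q}'|=|(I-2\widehat{n}\otimes\widehat{n})(q-\ov{q})|=|q-\ov{q}|$. The paper stops there, simply asserting smoothness of $\sigma_{\ast}$ on $T\mathbb{R}^{6}$ and the diffeomorphism property of $\Sigma_{\ast}$ without argument; you supply the involution computation via $\tau\circ\Sigma_{\ast}=\tau$ and $\widehat{\nu}\circ\Sigma_{\ast}=-\widehat{\nu}$, and also the half-space check at $\partial\mathcal{D}$, neither of which appears in the paper.

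Your caveat about the tangency locus $\partial\mathcal{C}(X)$ is well taken and is a genuine point the paper passes over in silence: the two definitions of $\sigma_{\ast}$ (via $\widehat{\nu}$ on the collision bundle, and $I$ off it) do not match across that interface, so literal $C^{\infty}$ smoothness on all of $T\mathbb{R}^{6}$ fails exactly as you say. Your reformulation --- smooth off a closed $\mathscr{L}_{12}$-null set, measure-preserving bijection globally --- is the honest statement and is indeed all that the subsequent change-of-variables argument requires.
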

\begin{proof}
Using the extension of $\tau$ to $\sqcup_{Q\in\mathbb{R}^{6}}\mathcal{C}(Q)$, we define $\widehat{\nu}:T\mathbb{R}^{6}\rightarrow\mathbb{S}^{5}$ by
\begin{equation*}
\widehat{\nu}(Q, P):=\frac{1}{\sqrt{2}\varepsilon}\left[
\begin{array}{c}
\ov{q}-q+\tau(Q, P)(\ov{p}-p) \\
q-\ov{q}+\tau(Q, P)(p-\ov{p})
\end{array}
\right]
\end{equation*}
and the associated Boltzmann scattering map $\sigma_{\ast}: \sqcup_{Q\in\mathbb{R}^{6}\setminus\mathcal{P}}\mathcal{C}(Q)\rightarrow\mathrm{O}(6)$ by
\begin{equation*}
\sigma_{\ast}(Q, P):=I-2\widehat{\nu}(Q, P)\otimes\widehat{\nu}(Q, P).
\end{equation*}
One may verify that $\sigma_{\ast}$ is a smooth map on $T\mathbb{R}^{6}$. Moreover, one may also verify that $\Sigma_{\ast}$ defined by \eqref{siggy} above is a smooth diffeomorphism with unit Jacobian. It remains to show that $\Sigma_{\ast}$ acts in an invariant manner on the sets $\mathcal{D}^{\circ}$, $\partial\mathcal{D}$ and $(\mathbb{R}^{6}\setminus\mathcal{P})\times\mathbb{R}^{6}$. 

We note first that the map acts $\Sigma_{\ast}$ in an invariant manner on the sets $\mathcal{D}^{\circ}$, $\partial\mathcal{D}$ and $(\mathbb{R}^{6}\setminus\mathcal{P})\times\mathbb{R}^{6}$ if and only if $\sigma_{\ast}(\mathcal{P}^{\circ})=\mathcal{P}^{\circ}$, $\sigma_{\ast}(\partial\mathcal{P})=\partial\mathcal{P}$ and $\sigma_{\ast}(\mathbb{R}^{6}\setminus\mathcal{P})=\mathbb{R}^{6}\setminus\mathcal{P}$, respectively. To demonstrate the latter, let us define the maps $x':T\mathbb{R}^{6}\rightarrow\mathbb{R}^{3}$ and $\ov{x}':T\mathbb{R}^{6}\rightarrow\mathbb{R}^{3}$ by
\begin{equation*}
q'(Q, P):=\left(
\begin{array}{c}
(\sigma(Q, P)Q)_{1} \\
(\sigma(Q, P)Q)_{2} \\
(\sigma(Q, P)Q)_{3}
\end{array}
\right)\quad \text{and}\quad\ov{q}'(Q, P):=\left(
\begin{array}{c}
(\sigma(Q, P)Q)_{4} \\
(\sigma(Q, P)Q)_{5} \\
(\sigma(Q, P)Q)_{6}
\end{array}
\right),
\end{equation*}
one has that
\begin{equation*}
\begin{array}{c}
|q'(Q, P)-\ov{q}'(Q, P)| \vspace{2mm}\\
=|q-\ov{q}-2[(q-\ov{q})\cdot \widehat{n}(Q, P)]\widehat{n}(Q, P)|\vspace{2mm}\\
=|(I-2\widehat{n}(Q, P)\otimes \widehat{n}(Q, P))(q-\ov{q})|,
\end{array}
\end{equation*}
where $I$ denotes here the identity matrix in $\mathbb{R}^{3\times 3}$ and $\widehat{n}: T\mathbb{R}^{6}\rightarrow\mathbb{S}^{2}$ denotes the map
\begin{equation*}
\widehat{n}(Q, P):=\frac{1}{\varepsilon}\frac{\ov{q}-q+\tau(Q, P)(\ov{p}-p)}{|\ov{q}-q+\tau(Q, P)(\ov{p}-p)|}.
\end{equation*}
Thus, as reflection matrices in $\mathrm{O}(3)$ act isometrically on $\mathbb{R}^{3}$, it follows that
\begin{equation*}
|q'(Q, P)-\ov{q}'(Q, P)|=|q-\ov{q}|,
\end{equation*}
from which the claim of the lemma follows.
\end{proof}
\subsection{The Chaotic Structure of Weak Solutions of the Liouville Equation}
We observe in this section that for a large class of initial data, global-in-time mild solutions of \eqref{freetranny} enjoy a bijective correspondence with global-in-time mild solutions of the free transport equation on $T\mathbb{R}^{6}$ given by
\begin{equation}\label{classtrans}
\frac{\partial f}{\partial t}+(P\cdot\nabla_{Q})f=0 \quad \text{on}\hspace{2mm}T\mathbb{R}^{6},\tag{CT}
\end{equation}
where $f:T\mathbb{R}^{6}\times(-\infty, \infty)\rightarrow\mathbb{R}$. When the initial data under consideration are sufficiently smooth, mild solutions of \eqref{freetranny} give rise to global-in-time {\em classical} solutions of the above transport equation. 
\subsubsection{The Role of Free Transport}
To verify the above claim, we suppose first that $\mathfrak{f}$ is the global-in-time mild solution of \eqref{freetranny} associated to the initial datum
\begin{equation}\label{derpz}
\mathfrak{f}_{0}(\zeta):=\left\{
\begin{array}{ll}
F_{0}(Q, P) & \quad \text{if}\hspace{2mm}\zeta=[Q, P]_{1}\hspace{2mm}\text{for some}\hspace{2mm}[Q, P]\in T\mathbb{R}^{6}, \vspace{2mm}\\
G_{0}(Q, P) & \quad \text{if}\hspace{2mm}\zeta=[Q, P]_{2}\hspace{2mm}\text{for some}\hspace{2mm}[Q, P]\in T\mathbb{R}^{6},
\end{array}
\right.
\end{equation}
where $F_{0}\in C^{1}_{c}(T\mathbb{R}^{6})$ with $\mathrm{supp}\,F_{0}\subseteq\mathcal{D}$, and $G_{0}(Q, P):=F_{0}(\sigma_{\ast}(Q, P)Q, \sigma_{\ast}(Q, P)P)$ as above. Using arguments similar to those employed above, it is straightforward to verify that there exists a map $\mathfrak{S}_{0}:T\mathbb{R}^{6}\sqcup T\mathbb{R}^{6}\rightarrow\mathcal{P}\sqcup\mathcal{P}\times\mathbb{R}^{6}$ satisfying
\begin{equation}\label{sid}
S_{t}\#\mathfrak{f}_{0}=\mathfrak{S}_{0}\left(U_{t}\#\left(
\begin{array}{c}
F_{0}\\
G_{0}
\end{array}
\right)\right),
\end{equation}
where $\{U_{t}\}_{t\in\mathbb{R}}$ is the flow on the disjoint union $T\mathbb{R}^{6}\sqcup T\mathbb{R}^{6}$ given by
\begin{equation*}
U_{t}\left(
\begin{array}{c}
[X_{1}, V_{1}] \\

[X_{2}, V_{2}]
\end{array}
\right):=\left(
\begin{array}{c}
[X_{1}+tV_{1}, V_{1}]\\

[X_{2}+tV_{2}, V_{2}]
\end{array}
\right).
\end{equation*}
As such, studying mild solutions of \eqref{freetranny} on $\mathcal{P}\sqcup\mathcal{P}\times\mathbb{R}^{6}$ is equivalent to studying the free transport equation \eqref{classtrans} on 2 copies of $T\mathbb{R}^{6}$. Moreover, when initial data $\mathfrak{f}_{0}$ are of the form \eqref{derpz}, studying mild solutions \eqref{freetranny} on $\mathcal{P}\sqcup\mathcal{P}\times\mathbb{R}^{6}$ is equivalent to studying classical solutions of the free transport equation on a {\em single} copy of $T\mathbb{R}^{6}$. Indeed, one notes that if $F_{0}\in C^{1}_{c}(T\mathbb{R}^{6})$ and $G_{0}$ is the associated `twist' map, then both
\begin{equation*}
f(Q, P, t):=F_{0}(Q-tP, P)
\end{equation*}
and
\begin{equation*}
g(Q, P, t):=F_{0}(\sigma(Q, P)(Q-tP), \sigma(Q, P)P)
\end{equation*}
are global-in-time classical solutions of the free transport equation \eqref{classtrans} above.
\subsubsection{Corruption of Chaos}
Let us now discuss how identity \eqref{sid} reveals the manner in which the Liouville equation for hard spheres corrupts the structure of chaotic initial data. Suppose $F_{0}\in C^{1}_{c}(T\mathbb{R}^{6})$ is such that $\mathrm{supp}\,F_{0}\subseteq\mathcal{D}$. By basic algebraic properties of the Boltzmann scattering map, it follows that $G_{0}:=F_{0}\circ \Sigma_{\ast}$ also belongs to the same class of maps. Owing to the above observations, we write $\mathfrak{S}$ to denote the linear operator defined on the space of maps 
\begin{equation*}
\left\{
t\mapsto U_{t}\#\left(
\begin{array}{c}
F_{0}\\
F_{0}\circ\Sigma_{\ast}
\end{array}
\right)\,:\,F_{0}\in C^{1}_{c}(T\mathbb{R}^{6}), \hspace{2mm}\mathrm{supp}\,F_{0}\subseteq\mathcal{D}
\right\}
\end{equation*}
which maps the unique global-in-time classical solution $f$ of \eqref{classtrans} subject to the initial datum $F_{0}$ to the associated mild solution $\mathfrak{f}$ of \eqref{freetranny} subject to the initial datum \eqref{derpz}. It is well known that if $F_{0}\in C^{1}_{c}(T\mathbb{R}^{6})$ is taken to be of the shape
\begin{equation*}
F_{0}(Q, P):=\phi_{0}(q, p)\phi_{0}(\ov{q}, \ov{p})
\end{equation*}
for some $\phi_{0}\in C^{1}_{c}(T\mathbb{R}^{3})$, then the unique solution of \eqref{classtrans} is of the form
\begin{equation*}
f(Q, P, t)=\phi_{0}(q-tp, p)\phi_{0}(\ov{q}-t\ov{p}, \ov{p})
\end{equation*}
for all $(Q, P)\in T\mathbb{R}^{6}$ and $t\in (-\infty, \infty)$. In other words, it is a {\em chaotic} map on $T\mathbb{R}^{6}\times (-\infty, \infty)$. Finally, using the previously-discovered relationship 
\begin{equation*}
F=R'\mathfrak{f}\quad \text{in}\hspace{2mm}\mathscr{S}(T\mathbb{R}^{6})'
\end{equation*}
between $F$ and $\mathfrak{f}$, we have that the unique global-in-time weak solution of the Liouville equation admits the representation formula
\begin{equation}\label{chaosbut}
F=\mathsf{R}(f\otimes f),
\end{equation}
where $\mathsf{R}$ denotes the linear operator $\mathsf{R}:=R'\circ \mathfrak{S}$, which proves Theorem \ref{proppythm} above. One interpretation of the formula \eqref{chaosbut} is that while chaos is in general not propagated by the Liouville equation at the level of functions on $\mathcal{D}$, it is {\em essentially} propagated in a space of distributions by way of identity \eqref{chaosbut}. Note that nothing more than reflection and identification arguments were used to establish this identity.
\section{The BBGKY Hierarchy}\label{bbgkysection}
Our interest in solutions of the Liouville equation associated to hard sphere dynamics stems primarily from the BBGKY hierarchy to which it is connected. The derivation of global-in-time weak solutions of the BBGKY hierarchy which governs the marginals $F^{(1)}$ and $F^{(2)}$ of $F$ given by
\begin{equation*}
F^{(1)}(x, v, t):=\int_{\mathbb{R}^{3}\setminus B_{\varepsilon}(x)}\int_{\mathbb{R}^{3}}F(X, V, t)\,d\ov{v}d\ov{x}
\end{equation*}
for $[x, v]\in T\mathbb{R}^{3}$ and
\begin{equation*}
F^{(2)}(X, V, t):=F(X, V, t)
\end{equation*}
for $[X, V]\in\mathcal{D}$ becomes particularly elegant when viewed in the light of the approach of this article. Indeed, global-in-time weak solutions of the hierarchy are obtained simply from making a suitable choice of test function $\Phi$ in the definition of global-in-time weak solution of the Liouville equation. 
\subsection{Derivation of Weak Solutions of the BBGKY Hierarchy}
We choose a symmetric initial datum $F_{0}\in C^{1}_{c}(T\mathbb{R}^{6})$ with the property that $\mathrm{supp}\,F_{0}\subseteq\mathcal{D}$. By Theorem \ref{existandunique} above, we know that there exists a unique physical global-in-time weak solution $F\in C^{0}((-\infty, \infty), L^{1}(\mathcal{D}))$ of the Liouville equation \eqref{weakdude}. As such, the equation of the BBGKY hierarchy for the second marginal is immediately satisfied by $F^{(2)}=F$. It remains to show that the first marginal $F^{(1)}$ of $F$ satisfies the claimed equation. Indeed, using the observation that $F$ has compact support in $\mathcal{D}$ for all times, one deduces that $F$ satisfies
\begin{equation}\label{tellme}
\begin{array}{c}
\displaystyle \int_{\mathcal{P}}\int_{\mathbb{R}^{6}}\int_{-\infty}^{\infty}\left(\partial_{t}\Phi(X, V, t)+V\cdot\nabla_{X}\Phi(X, V, t)\right)F(X, V, t)\,dtdVdX \vspace{2mm}\\
=-\displaystyle\int_{\partial\mathcal{P}}\int_{\mathbb{R}^{6}}\int_{-\infty}^{\infty}\Phi(Y, V, t)F(Y, V, t)V\cdot\widetilde{\nu}(Y)\,dtdVdY
\end{array}
\end{equation}
for all $\Phi\in C^{\infty}(T\mathbb{R}^{6})$. By choosing the test function $\Phi\in C^{\infty}(T\mathbb{R}^{6})$ to be {\em independent} of its barred argument, namely
\begin{equation*}
\Phi(x, v, \ov{x}, \ov{v}, t)=\psi(x, v, t) \quad \text{for all}\hspace{2mm}[X, V]\in\mathcal{P}
\end{equation*} 
for some $\psi\in C^{\infty}(T\mathbb{R}^{3})$, it follows that the left-hand side of identity \eqref{tellme} becomes
\begin{equation*}
\begin{array}{c}
\displaystyle \int_{\mathcal{P}}\int_{\mathbb{R}^{6}}\int_{-\infty}^{\infty}\left(
\partial_{t}+V\cdot\nabla_{X}
\right)\Phi(X, V, t)F(X, V, t)\,dtdVdX\vspace{2mm}\\
\displaystyle = \int_{\mathcal{P}}\int_{\mathbb{R}^{6}}\int_{-\infty}^{\infty}\left(\partial_{t}+v\cdot\nabla_{x}\right)\psi(x, v, t)F(X, V, t)\,dtdVdX\vspace{2mm}\\
=\displaystyle \int_{\mathbb{R}^{3}}\int_{\mathbb{R}^{3}}\int_{-\infty}^{\infty}\left(\partial_{t}+v\cdot\nabla_{x}\right)\psi(x, v, t)\underbrace{\left(
\int_{\mathbb{R}^{3}\setminus B_{\varepsilon}(x)}\int_{\mathbb{R}^{3}}F(X, V, t)\,d\ov{x}d\ov{v}
\right)}_{F^{(1)}(z, t)=}\,dtdvdx.
\end{array}
\end{equation*}
Similarly, by changing measure
\begin{equation*}
V\mapsto\left(I-\left[
\begin{array}{c}
n \\
-n
\end{array}
\right]\otimes \left[
\begin{array}{c}
n \\
-n
\end{array}
\right]\right)V
\end{equation*}
on $\mathcal{C}^{-}(n)$ for each $n\in\mathbb{S}^{2}$ and using the fact that $(v_{n}'-\ov{v}_{n}')\cdot n=-(v-\ov{v})\cdot n$, where
\begin{equation*}
\begin{array}{c}
v_{n}'=v-((v-\ov{v})\cdot n)n, \vspace{2mm}\\
\ov{v}_{n}'=\ov{v}+((v-\ov{v})\cdot n)n,
\end{array}
\end{equation*} 
the right-hand side of identity \eqref{tellme} becomes
\begin{align*}
\int_{\partial\mathcal{P}}\int_{\mathbb{R}^{6}}\int_{-\infty}^{\infty}\Phi(Y, V, t)F(Y, V, t)V\cdot\widetilde{\nu}(Y)\,dtdVdY \vspace{2mm}\\
=\frac{1}{\sqrt{2}}\int_{\mathbb{R}^{3}}\int_{\mathbb{S}^{2}}\int_{\mathbb{R}^{6}}\int_{-\infty}^{\infty}\psi(y, v, t)F^{(2)}(y, v, y+\varepsilon n, \ov{v}, t)(v-\ov{v})\cdot n(y)\,dtdVdndx\vspace{2mm}\\
=\frac{1}{\sqrt{2}}\int_{\mathbb{R}^{3}}\int_{\mathbb{S}^{2}}\int_{\mathcal{C}^{-}(n)}\int_{-\infty}^{\infty}\psi(y, v, t)F^{(2)}(y, v, y+\varepsilon n, \ov{v}, t)(v-\ov{v})\cdot n\,dtdVdndx \vspace{2mm}\\
+\frac{1}{\sqrt{2}}\int_{\mathbb{R}^{3}}\int_{\mathbb{S}^{2}}\int_{\mathcal{C}^{+}(n)}\int_{-\infty}^{\infty}\psi(y, v, t)F^{(2)}(y, v, y+\varepsilon n, \ov{v}, t)(v-\ov{v})\cdot n\,dtdVdndx\vspace{2mm}\\
=\frac{1}{\sqrt{2}}\int_{\mathbb{R}^{3}}\int_{\mathbb{S}^{2}}\int_{\mathcal{C}^{-}(n)}\int_{-\infty}^{\infty}\psi(y, v, t)\bigg(F^{(2)}(y, v, y+\varepsilon n, \ov{v}, t) \vspace{2mm}\\
-F^{(2)}(y, v_{n}', y+\varepsilon n, \ov{v}_{n}', t)\bigg)(v-\ov{v})\cdot, n\,dtdVdndx.
\end{align*}
This demonstrates that the maps $F^{(1)}$ and $F^{(2)}$ constitute a global-in-time weak solution of the BBGKY hierarchy.
%\section{Final Remarks}
\subsection*{Acknowledgement}
The author would like to extend his sincere thanks to Professor Piero Marcati for his kind invitation to visit the Gran Sasso Science Institute in L'Aquila, Italy, where part of this work was done. 
\bibliography{bib}

\end{document}